\documentclass[11pt,a4paper,reqno]{amsart}

\usepackage[a4paper,lmargin=2cm,rmargin=2cm,tmargin=4cm,bmargin=4cm]{geometry}

\usepackage[english]{babel}

\usepackage[centertags]{amsmath}
\usepackage{amsfonts}
\usepackage{amssymb}
\usepackage{amsthm}
\usepackage{dsfont}

\usepackage[dvipsnames]{xcolor}

\usepackage[normalem]{ulem}
\usepackage{bbm}

\usepackage{hyperref}
	\hypersetup{breaklinks=true,colorlinks=true,
linkcolor=MidnightBlue,citecolor=MidnightBlue,
urlcolor=MidnightBlue}
\usepackage{enumerate}

\makeatletter
\g@addto@macro\bfseries{\boldmath} 
\makeatother

\usepackage{tikz-cd,graphicx}
\usepackage{tikz}
\usepackage{subcaption}
\usetikzlibrary{shapes.geometric, arrows}
\usetikzlibrary{positioning,arrows,calc}
\tikzstyle{startstop} = [rectangle, rounded corners, minimum width=2cm, minimum height=1cm,text centered,text width=2cm, draw=black]
\tikzstyle{io} = [rectangle, rounded corners, minimum width=2cm, minimum height=1cm,text centered,text width=2cm, draw=black]

\tikzstyle{arrow} = [thick,->,>=stealth]
\tikzstyle{arrow2} = [dashed,->,>=stealth]

\usepackage{dot2texi}

\numberwithin{equation}{section}
\usepackage{mathtools}

\usepackage[shortlabels]{enumitem}

\newcommand{\ep}{\varepsilon}

\newcommand{\N}{\mathbb{N}}

\newcommand{\dens}{\operatorname{dens}}

\newcommand{\eps}{\varepsilon}

\newcommand{\nnorm}[1]{{\left\vert\kern-0.25ex\left\vert\kern-0.25ex\left\vert #1%
    \right\vert\kern-0.25ex\right\vert\kern-0.25ex\right\vert}}

\newcommand{\Id}{{\mathrm{Id}}}

\newcommand{\Lip}{{\mathrm{Lip}}_0}

\DeclareMathOperator{\supp}{supp}
\DeclareMathOperator{\conv}{conv}

\DeclareMathOperator{\cconv}{\overline{conv}}

\DeclareMathOperator{\spn}{span}
\DeclareMathOperator{\cspn}{\overline{span}}

\newcommand{\R}{\mathbb{R}}
\renewcommand{\geq}{\geqslant}
\renewcommand{\leq}{\leqslant}

\theoremstyle{plain}
 \newtheorem{theorem}{Theorem}[section]
 \newtheorem{lemma}[theorem]{Lemma}
 \newtheorem{proposition}[theorem]{Proposition}
 \newtheorem{corollary}[theorem]{Corollary}

\theoremstyle{definition}
 \newtheorem{definition}[theorem]{Definition}
 \newtheorem*{definition*}{Definition}
 \newtheorem{remark}[theorem]{Remark}
 
 \newtheorem{question}[theorem]{Question}

\newcommand{\iten}{\,\ensuremath{\widehat{\otimes}_\varepsilon}\,}
\newcommand{\pten}{\,\ensuremath{\widehat{\otimes}_\pi}\,}

\begin{document}

\title{Transfinite Daugavet property}

\author[Avil\'es]{Antonio Avil\'es}
\address[Avil\'es]{Universidad de Murcia, Departamento de Matem\'{a}ticas, Campus de Espinardo 30100 Murcia, Spain
	\newline
	\href{https://orcid.org/0000-0003-0291-3113}{ORCID: \texttt{0000-0003-0291-3113} } }
\email{\texttt{avileslo@um.es}}

\author[Langemets]{Johann Langemets}
\address[Langemets]{Institute of Mathematics and Statistics, University of Tartu, Narva mnt 18, 51009 Tartu, Estonia}
\email{johann.langemets@ut.ee}
\urladdr{\url{https://www.johannlangemets.com/}}
\urladdr{
\href{https://orcid.org/0000-0001-9649-7282}{ORCID: \texttt{0000-0001-9649-7282} } }

\author[Mart\'in]{Miguel Mart\'in}
\address[Mart\'in]{Department of Mathematical Analysis and Institute of Mathematics (IMAG), University of Granada, E-18071 Granada, Spain}
\email{mmartins@ugr.es}
\urladdr{\url{https://www.ugr.es/local/mmartins/}}
\urladdr{
\href{http://orcid.org/0000-0003-4502-798X}{ORCID: \texttt{0000-0003-4502-798X} } }

\author[Rueda Zoca]{Abraham Rueda Zoca}
\address[Rueda Zoca]{Department of Mathematical Analysis and Institute of Mathematics (IMAG), University of Granada, E-18071 Granada, Spain}
\email{abrahamrueda@ugr.es}
\urladdr{\url{https://arzenglish.wordpress.com}}
\urladdr{
\href{https://orcid.org/0000-0003-0718-1353}{ORCID: \texttt{0000-0003-0718-1353} } }

\subjclass[2020]{Primary 46B04; Secondary 06E15, 46B20, 46B26, 47A30}
\date{\today}

\keywords{Daugavet property; reaping number, $C(K)$ spaces, $L_1(\mu)$ spaces, spaces of Lipschitz functions}

\begin{abstract}
We extend the Daugavet property  and a perfect version of it to transfinite cardinals in order to distinguish between spaces with the ordinary Daugavet property by some kind of complexity (topological, density\ldots), providing a number of examples and results. First, we characterise the transfinite Daugavet $C(K)$ spaces in terms of a cardinal index $\mathfrak r(K)$, which generalises the notion of the reaping number of a Boolean algebra. Besides, the perfect Daugavet property characterizes the absence of $G_\delta$-points in $K$. We also study several inheritance results of the transfinite Daugavet properties by almost isometric ideals, absolute sums, and tensor product spaces, with a number of applications. We classify these properties for $L_1(\mu)$ and $L_\infty(\mu)$ spaces in terms of the Maharam's decomposition of the measure. We also show that the space of Lipschitz functions $\Lip(M)$ on a complete length metric space has the $\omega$-perfect Daugavet property, improving the previous knowledge.
\end{abstract}

\maketitle
\begin{center}
\begin{minipage}{.8\textwidth}
  \centering
\parskip=0ex
    \tableofcontents
\end{minipage}
\end{center}

\section{Introduction}

The study of geometric properties of Banach spaces in the extreme opposite to the Radon-Nikod\'{y}m property or to the Fr\'echet smoothness is a very active research area since the appearance of the Daugavet property and octahedrality of norms in the last part of the XX Century, which received an extra boost with the origin of the big slice phenomena in the early 2000's. As a sample of the state of the art in this area, we refer the reader to \cite{almt21,amcps25,blr15eje2,cmp24,hllnr20,Samir, KaniaLewicki2026,lrz21} and references therein. Being the main object of study of this paper, we recall that a Banach space $X$ is said to satisfy the \emph{Daugavet property} if every rank-one bounded linear operator $T\colon X\longrightarrow X$ satisfies $\|\Id + T\|=1+\|T\|$ (equality known as the \textit{Daugavet equation}), where $\Id$ stands for the identity operator. We refer the reader to the very recent book \cite{kmrzw25} for reference and background. As a sample of ideas, let us just mention that the Daugavet property characterises the perfectness of $K$ on $C(K)$ spaces, the atomlessness of $\mu$ on $L_1(\mu)$ or $L_\infty(\mu)$ spaces, and the diffuseness of C$^*$-algebras and von Neumann preduals.

A very recent and emerging topic in the big slice phenomena is the study of generalisations to transfinite versions of Banach spaces properties as octahedrality, (symmetric) strong diameter two properties or almost squareness in the successive papers \cite{acllrz23,c23,cll23,cll22} (see also the PhD dissertation \cite{ciaciphd}). Let us point out, as a matter of fact, that this research line permitted to obtain multiple characterisations of the fact that a Banach space $X$ contains either $\ell_1(\kappa)$ or $c_0(\kappa)$ in the papers \cite{acllrz23, amcrz23,cll23}.

The aim of the present manuscript is to study transfinite versions of the Daugavet property. Let us motivate and present these versions, see Section~\ref{sect:notation} for notation and terminology. One of the equivalent formulations of the Daugavet property for a Banach space $X$ is that for every element $y$ in the unit sphere $S_X$ of $X$, every slice $S$ of the closed unit ball $B_X$ of $X$ and every $\varepsilon>0$, there exists $x\in S$ with $\|y+x\|>2-\varepsilon$ \cite[Theorem 3.1.5]{kmrzw25}. Based on this, we introduce the following definition. For a cardinal $\kappa$, we write $\{y_\alpha\colon \alpha<\kappa\}$ as a way to denote a set $A$ with $|A|\leq \kappa$. As usual, $\omega$ denotes the first infinite cardinal.

\begin{definition}\label{def:daugaprop}
Let $X$ be a Banach space and let $\kappa$ be a cardinal. We say that $X$ has the \textit{$\kappa$-Daugavet property} if for every family $\{y_\alpha\colon \alpha<\kappa\}\subseteq S_X$, every slice $S$ of $B_X$, and every $\varepsilon>0$, we can find $x\in S$ such that $\Vert y_\alpha+x\Vert>2-\varepsilon$
holds for all $\alpha<\kappa$.
\end{definition}

We are interested in studying the property for infinite $\kappa$, but the definition makes sense in any case. Every Banach space trivially has the $0$-Daugavet property, while the $1$-Daugavet property is just the equivalent reformulation of the Daugavet property given above. As a matter of fact, the ($1$-)Daugavet property actually implies the $n$-Daugavet property for every $n\in \N$ (see the proof of \cite[Lemma~3.1.14]{kmrzw25}, for instance).

We also introduce the transfinite version of the \textit{perfect Daugavet property}, a variant of the Daugavet property considered in \cite[Section 7.10]{kmrzw25} in which $\varepsilon$ is allowed to take the value zero and it is related to the possible stability of the Daugavet property by ultraproducts. 

\begin{definition}
Let $X$ be a Banach space and let $\kappa$ be a cardinal. We say that $X$ has the $\kappa$\textit{-perfect Daugavet property} if for every family $\{y_\alpha\colon \alpha<\kappa\}\subseteq S_X$ and every slice $S$ of $B_X$, we can find $x\in S$ such that
$\Vert y_\alpha+x\Vert=2$ holds for all $\alpha<\kappa$.
\end{definition}

Again, every Banach space has the $0$-perfect Daugavet property and the $1$-perfect Daugavet property is just the definition of the perfect Daugavet property given in \cite[Section 7.10]{kmrzw25}. Nevertheless, we do not know if the ($1$-)perfect Daugavet property implies the $2$-perfect Daugavet property.

It is common in topology to encode these ideas in the form of cardinal invariants. We define $\mathfrak{Dau}(X)$ as the least cardinal $\kappa$ such that $X$ fails the $\kappa$-Daugavet property, and  $\mathfrak{pDau}(X)$ as the least cardinal $\kappa$ such that $X$ fails the $\kappa$-perfect Daugavet property. Thus, a Banach space $X$ has the $\kappa$-Daugavet property if and only if $\kappa< \mathfrak{Dau}(X)$, and $X$ has the  $\kappa$-perfect Daugavet property if and only if $\kappa< \mathfrak{pDau}(X)$. These indices also encode the classical properties in the sense that $X$ fails the Daugavet property if and only if $\mathfrak{Dau}(X)=1$ (if and only if $\mathfrak{Dau}(X)<+\infty$) and, similarly, $X$ fails the perfect Daugavet property if and only if $\mathfrak{pDau}(X)=1$. Observe that these invariant cardinals measure the lowest size of a family that does not admit any almost $L$-orthogonal (in the case of $\mathfrak{Dau}(X)$) or even $L$-orthogonal (in the case of $\mathfrak{pDau}(X)$) inside some slice. In particular, observe further that $\mathfrak{Dau}(X)$  has to be smaller than or equal to the density character of the space $X$. Hence, for instance, a separable Banach space with the Daugavet property has $\mathfrak{Dau}(X)=\omega$. For non-separable Banach spaces, as we will see, these indices provide a way to distinguish between Banach spaces with the Daugavet property showing, somehow, that there are Banach spaces with stronger forms of the  property than others.

After providing some basic characterisations in Section~\ref{sect:firstproperties}, we study spaces of continuous functions $C(K)$, where the intended goal to distinguish between different levels of the Daugavet property takes form clearly. In Section~\ref{section:C(K)} we prove that, for infinite cardinals $\kappa$, the $\kappa$-Daugavet property, the $\kappa$-perfect Daugavet property, $\kappa$-octahedrality, and $\kappa$-rigid octahedrality, are all equivalent for $C(K)$ spaces. However, this is not the case for finite cardinals. We give a topological description of the cardinal $\mathfrak{Dau}(C(K))$ in terms of a topological invariant of the compact space $K$, which we call  the \textit{reaping number} of $K$ and we denote by $\mathfrak{r}(K)$. The reason for this name is that in the case when $K$ is totally disconnected, $\mathfrak{r}(K)$ coincides with the reaping number of the Boolean algebra of clopens of $K$, a well studied cardinal invariant \cite{bs1,bs2,burke,dsw,laflamme}, sometimes known under other names like \emph{weak density} or \emph{refinement number}. Direct application of some results in the literature give the value of $\mathfrak{Dau}(X)$ for some classical $C(K)$ spaces in terms of well known cardinals, like $\mathfrak{Dau}(\ell_\infty/c_0)=\mathfrak{r}$, $\mathfrak{Dau}(L_\infty[0,1]) = \operatorname{cof}\mathcal{N}$, $\mathfrak{Dau}(L_\infty[0,1]^\kappa) = \max( \operatorname{cof}\mathcal{N},\operatorname{cof}[\kappa]^\omega)$. We also show that $\mathfrak{Dau}(C(K))\geq \kappa$ whenever $K$ is a product of $\kappa$ many compact spaces with more than one point. Some of the known facts for the reaping number of Boolean algebras generalise naturally for compact spaces, but there are also cases when new subtler phenomena seem to appear. In Section~\ref{section:reapingnu}, rather topological in nature and of independent interest, we analyse this situation and the implications for the behavior of the $\kappa$-Daugavet property in $C(K)$ spaces.

In Section~\ref{sect:directsums}, we show that for $\ell_1$-sums and finite $\ell_\infty$-sums of Banach spaces, we can compute the Daugavet and perfect Daugavet cardinal invariants by the formulae $$\mathfrak{Dau}\left(\Big(\bigoplus\nolimits_{i\in I} X_i\Big)_E\right) = \min_{i\in I}\{\mathfrak{Dau}(X_i)\},  \quad \mathfrak{pDau}\left(\Big (\bigoplus\nolimits_{i\in I} X_i\Big)_E\right) = \min_{i\in I}\{\mathfrak{pDau}(X_i)\}$$
(where $E$ equals either $\ell_1(I)$ for arbitrary $I$ or $\ell_\infty(J)$ for finite $J$). 
In the case of infinite $\ell_\infty$-sums, we are only able to prove the inequalities $(\leq)$, except for the case of $C(K)$-spaces for which the results of Section~\ref{section:reapingnu} allow us to show the converse inequality as well. The formula fails for infinite $c_0$-sum.  Combining these formulae with Maharam's theorem and the previous results, one can compute $\mathfrak{Dau}(L_1(\mu))$ for all measures (in Section~\ref{sect:L1}) and $\mathfrak{Dau}(L_\infty(\mu))$ for localisable measures.

In Section~\ref{sect:L1} we prove that $\mathfrak{Dau}(L_1(\{0,1\}^\kappa)) = \kappa$ for infinite cardinals, leading to a calculation of the index for all $L_1(\mu)$ spaces with the help of Maharam's decomposition. By contrast, no perfect Daugavet property holds in any $L_1(\mu)$ space.

Section~\ref{sect:idealsandUD} provides another example of spaces enjoying transfinite Daugavet properties. We show that the $\kappa$-Daugavet property is inherited by $\kappa$-almost isometric ideals, and that spaces of almost universal disposition for subspaces of density $\kappa$ enjoy the $\kappa$-Daugavet property. These spaces, recently studied in \cite{mrz25}, are generalisations of the Gurari\u{\i} space and its transfinite cousins \cite{gk11}. Similar results hold for $\kappa$-perfect Daugavet property, $\kappa$-isometric ideals, and spaces of universal disposition.

We show in Section~\ref{sect:tensor} that the projective tensor product
$X\pten Y$ preserves the $\kappa$-Daugavet and the $\kappa$-perfect Daugavet properties when $X$ and $Y$ are also injective Banach spaces (or even satisfy a weaker injectivity condition). Concerning the injective tensor product, we have an intriguing result from Section~\ref{section:reapingnu} that if $K$ and $L$ are totally disconnected compact spaces, then 
 $$\mathfrak{Dau}(C(K))\vee \mathfrak{Dau}(C(L))\leq \mathfrak{Dau}\bigl(C(K)\iten C(L)\bigr) \leq \mathfrak{Dau}(C(K))^+\vee \mathfrak{Dau}(C(L))^+,$$
where we are denoting $\kappa\vee \kappa' := \max(\kappa,\kappa')$ and $\kappa^+$ is the successor cardinal of $\kappa$. The second inequality says that, for spaces of continuous functions, if $X\iten Y$ has the $\kappa$-Daugavet property, then either $X$ or $Y$ has the $\kappa^+$-Daugavet property. We do not know if this successor operation is necessary or if this holds for more general spaces $X$, $Y$.
 
Finally, in Section~\ref{sect:Lipschitz} we study the Banach space  $\Lip(M)$ of Lipschitz functions on a complete metric space $M$. The complete metric spaces for which $\Lip(M)$ has the Daugavet property are exactly the \textit{length metric spaces}, those where every couple points has an approximate midpoint, cf.\ \cite{bbi}. We prove that if $M$ is a length metric space then, in fact, $\Lip(M)$ has the $\omega$-perfect Daugavet property. This result improves both \cite[Theorem 5.2]{cll23} and \cite[Theorem 3.5]{gpr18}.

Finally, we conclude the paper with Section~\ref{sect:remarkopeque}, which is devoted to present a collection of remarks and open questions.

\section{Notation and preliminary results}\label{sect:notation}
Even though most of the results do not depend on the base field, we will only consider in this paper real Banach spaces. Our notation is completely standard and follows textbooks as \cite{FHHMZ2011}. Given a (real) Banach space $X$, $B_X$ (respectively, $S_X$) stands for the closed unit ball (respectively, the unit sphere) of $X$. We will denote by $X^*$ the topological dual of $X$. Given a subset $C$ of $X$, $\conv(C)$ is the convex hull of $C$. We will say that a Banach space $X$ has \emph{density} $\kappa$, denoted by $\dens(X)$, if it is the least cardinal that a subset spanning a dense subspace of $X$ can have. With this definition we recover the classical one for infinite dimensional Banach spaces, that is, the least cardinal of a dense subset of $X$, but have the advantage that for a finite dimensional Banach space $X$ we obtain that $\dens(X) = \dim(X)$.

If $C$ is a bounded (and not necessarily convex) subset of a Banach space $X$, by a \textit{slice} of $C$ we will mean the non-empty intersection of an open half-space with the bounded  set $C$. Every slice can be written in the form 
$$S(C,f,\alpha):=\{x\in C\colon  f(x)>\sup f(C)-\alpha\}$$
for suitable $f\in X^*$ and $\alpha>0$. Notice that when $C$ is convex, every slice $S$ of $C$ is convex and so is $C\setminus S$.

Given a family $\{X_i\colon i\in I\}$ of Banach spaces, we write
$$
\left(\bigoplus_{i\in I}X_i\right)_{\ell_1},\qquad \left(\bigoplus_{i\in I}X_i\right)_{\ell_\infty}
$$
to denote, respectively, the $\ell_1$-sum and the $\ell_\infty$-sum of the family.

\subsection{Set theory and topology} We work in ZFC. Following the standard von Neumann's approach, each ordinal equals the set of preceding ordinals, $\alpha = \{\beta \colon \beta<\alpha\}$, and cardinals are ordinals that are not in bijection with any smaller ordinal. In this way, for example, $\{y_\alpha \colon \alpha<\kappa\}$ is just a family indexed by a set of cardinality $\kappa$ and the first infinite ordinal $\omega$ is also the set of natural numbers. All topological spaces are assumed to be Hausdorff. A set in a topological space is \textit{clopen} if it is closed and open simultaneously. A compact space is \textit{totally disconnected} (or \textit{zero-dimensional}) if every two different points are separated by a clopen set. A \textit{Boolean algebra} is a set endowed with abstract operations of union, intersection and complementation that make it isomorphic to an algebra of subsets of a set $\Omega$ closed under these set operations. The algebra of clopens of a topological space is an example, and Stone duality states that every Boolean algebra is isomorphic to the algebra of clopens of a uniquely determined totally disconnected compact space, its \textit{Stone space}.  We refer to \cite{jech} for set theory, \cite{engelking} for general topology, and \cite{walker} for Stone duality in particular.

\subsection{The Daugavet property and related geometric properties}
As we already said in the introduction, a Banach space $X$ is said to have the \textit{Daugavet property} if given any rank-one (bounded linear) operator $T\colon X\longrightarrow X$ the equality
$$\Vert\Id + T\Vert=1+\Vert T\Vert$$
holds. In \cite{kssw} the following useful characterisation in geometric terms was proved: a Banach space $X$ has the Daugavet property if and only if, given any finite subset $F\subseteq S_X$, any $\varepsilon>0$, and any slice $S$ of $B_X$, we can find $y\in S$ such that $\Vert x+y\Vert>2-\varepsilon$ holds for every $x\in F$. Note that this is equivalent to saying that for every finite-dimensional subspace $F$ of $X$, every slice $S$ of $B_X$, and every $\varepsilon>0$, there is $y\in S$ such that $$\|z+ty\|\geq (1-\varepsilon)\bigl(\|z\|+|t|\bigr)$$
for every $t\in \R$ and every $z\in F$; it is easily obtained from here that $X$ contains almost-isometric copies of $\ell_1$.
We refer the reader to \cite[Chapter 3]{kmrzw25} for background and more reformulations of the Daugavet property.

Closely related to the above reformulation of the Daugavet property is the property of \textit{octahedrality}. A Banach space $X$ is said to be \textit{octahedral} (or to have \textit{an octahedral norm)} if given any finite subset $F\subseteq S_X$ and any $\varepsilon>0$ we can find $y\in S_X$ such that $\Vert x+y\Vert>2-\varepsilon$ holds for every $x\in F$. This property, which was introduced at the end of the 1980s by G.\ Godefroy and B.\ Maurey, has recently received intense attention after the exhibited connection between octahedral norms and the diameter two properties proved in \cite{blr14}. For a brief survey about the octahedral norms and properties around we refer the reader to \cite[Section 12.2]{kmrzw25} and to references therein.

Natural generalisations of octahedrality in the way of defining transfinite properties were introduced in \cite{cll23}. Given a Banach space $X$ and a cardinal $\kappa$, we say that $X$ is:
\begin{enumerate}
    \item \textit{$\kappa$-octahedral} if for any subset $\{y_\alpha\colon \alpha<\kappa\}$ of $S_X$ and any $\varepsilon>0$, there exists $x\in S_X$ such that
    $$\Vert y_\alpha\pm x\Vert>2-\varepsilon$$
    holds for every $\alpha<\kappa$.
    \item \textit{$\kappa$-rigid octahedral} if for any subset $\{y_\alpha\colon \alpha<\kappa\}$ of $S_X$ there exists $x\in S_X$ such that
    $$\Vert y_\alpha\pm x\Vert=2$$
    holds for every $\alpha<\kappa$.
\end{enumerate}
For any infinite cardinal $\kappa$, writing $\|y_\alpha\pm x\|$ or $\|y_\alpha + x\|$ is irrelevant because we can suppose that the subset of $S_X$ is stable under switching signs. But for finite $\kappa$, the natural property is with $\pm$. In a similar way as we did for the Daugavet property, we define a cardinal invariant $\mathfrak{oct}(X)$ as the least cardinal $\kappa$ for which a Banach space $X$ fails to be $\kappa$-octahedral, and $\mathfrak{rig}\text{-}\mathfrak{oct}(X)$ is analogously defined. In this case, $X$ is octahedral if and only if $\mathfrak{oct}(X)\geq \omega$. However, $\mathfrak{oct}(X)>1$ is equivalent to the property called \emph{local octahedrality} \cite{hlp15}, cf.\ \cite[Section 12.2]{kmrzw25}. The notation of rigid octahedrality, which is introduced under the name of ``failure of the $(-1)$BCP$_{\kappa}$'' in \cite{cll23}, is taken from \cite{ciaciphd}. We refer the reader to \cite{amcrz23,cll23} for background and results about transfinite octahedrality properties. For more transfinite extensions of properties related to octahedrality and diameter two properties, we refer the reader to the aforementioned \cite{ciaciphd}. 

\section{Characterisations of the transfinite Daugavet properties}\label{sect:firstproperties}
This section is devoted to obtain multiple reformulations of the $\kappa$-Daugavet and $\kappa$-perfect Daugavet properties which will be of interest in the following sections.

Let us start with the following result which allows us to describe the transfinite Daugavet properties in terms of finding ``$L$-orthogonal vectors'' and ``almost $L$-orthogonal vectors'' to subspaces of certain prescribed density.

\begin{proposition}\label{lemma: subspace Daugavet}
Let $X$ be a Banach space and let $\kappa$ be an infinite cardinal. Then:
\begin{enumerate}
\item $X$ has the $\kappa$-Daugavet property if and only if for every subspace $Y$ of $X$ of density character at most $\kappa$, every slice $S$ of $B_X$ and every $\varepsilon>0$ we can find $x\in S$ such that the inequality
\[
\Vert y+rx\Vert\geq (1-\varepsilon)(\Vert y\Vert+\vert r\vert)
\]
holds for every $y\in Y$ and $r\in\R$.
\item $X$ has the $\kappa$-perfect Daugavet property if and only if for every subspace $Y$ of $X$ of density character at most $\kappa$ and every slice $S$ of $B_X$ we can find $x\in S$ such that the equality
\[
\Vert y+rx\Vert=\Vert y\Vert+\vert r\vert
\]
holds for every $y\in Y$ and $r\in\R$.
\end{enumerate}
\end{proposition}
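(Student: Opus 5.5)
The two "if" directions are immediate. Given $\{y_\alpha\colon\alpha<\kappa\}\subseteq S_X$, let $Y=\cspn\{y_\alpha\}$; since $\kappa$ is infinite, $\dens(Y)\leq\kappa$. An $x\in S$ with $\|y+rx\|\geq(1-\varepsilon)(\|y\|+|r|)$ gives $\|y_\alpha+x\|\geq 2-2\varepsilon$ for every $\alpha$. Taking $\varepsilon/2$ instead of $\varepsilon$ yields the $\kappa$-Daugavet property. In the perfect case we take $r=1$ and obtain $\|y_\alpha+x\|=2$ directly.

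For the converse in (1), fix $Y$ with $\dens(Y)\leq\kappa$, a slice $S=S(B_X,f,\delta)$ and $\varepsilon>0$. Let $D\subseteq S_Y$ be a dense subset of $S_Y$ of cardinality at most $\kappa$; this is possible since $\kappa$ is infinite. Apply the $\kappa$-Daugavet property to the family $D\cup(-D)$, the slice $S$ and a small $\eta>0$. This produces $x\in S$ with $\|d+x\|>2-\eta$ for all $d\in\pm D$. By density and continuity, $\|y+x\|\geq 2-\eta$ for all $y\in S_Y$, and likewise $\|y-x\|\geq 2-\eta$ after replacing $y$ by $-y$. It remains to upgrade this to the inequality for all $y\in Y$ and $r\in\R$. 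By homogeneity and symmetry we may assume $\|y\|+|r|=1$ and $r\geq 0$ (replace $y,r$ by $-y,-r$ if needed).

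The key step is the standard lemma: if $\|u\|=\|v\|\leq 1$ are such that $\|u+v\|\geq 2-\eta$, then for $a,b\geq0$ with $a+b=1$ we have $\|au+bv\|\geq 1-\eta$. To see this, assume $a\geq b$. Then $\|au+bv\|\geq \|a(u+v)\|-(a-b)\|v\|\geq a(2-\eta)-(a-b)\geq 1-\eta$, where we used $a+b=1$ and $a\leq 1$. Applying this with $u=y/\|y\|$, $v=x$, $a=\|y\|$ and $b=r$ gives $\|y+rx\|\geq 1-\eta$; the degenerate cases $y=0$ or $r=0$ are handled separately. Here we need $\|x\|$ close to $1$, which follows from $\|u+x\|\geq 2-\eta$. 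The sign $-x$ is not even needed once we use the symmetric family, since $r<0$ is reduced to $r>0$ by negating $y$. Choosing $\eta\leq\varepsilon$ concludes (1).

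For (2) the same argument runs with $\eta=0$. Exact equality on the dense set passes to all of $S_Y$ by continuity, and the lemma with $\eta=0$ gives $\|y+rx\|\geq\|y\|+|r|$, while the reverse inequality is the triangle inequality (using $\|x\|\leq 1$ and $\|x\|=1$ forced). The only delicate point, and the one I expect to need care, is the reduction from a dense family in $S_Y$ to all of $Y$ in the perfect case. Equalities $\|y+x\|=2$ are closed conditions, so continuity suffices, and no approximation error accumulates. This is precisely why infiniteness of $\kappa$ is required: we need a dense subset of $S_Y$ indexed by $\kappa$.
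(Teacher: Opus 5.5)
Your proposal is correct and follows essentially the same route as the paper: apply the $\kappa$-(perfect) Daugavet property to a dense, symmetric subset of the unit sphere of $Y$ (the paper normalises $D\cup(-D)$ for a dense $D\subseteq Y$), then upgrade $\|u+x\|\approx 2$ to the $\ell_1$-type estimate and pass to all of $Y$ by density. The only differences are cosmetic. The paper does the upgrade with a functional $x^*_y\in S_{X^*}$ almost norming both $y/\|y\|$ and $x$, whereas you use the triangle-inequality bound $\|au+bv\|\geq a\|u+v\|-(a-b)\|v\|$; that bound only needs $\|u\|,\|v\|\leq 1$, not $\|u\|=\|v\|$ as you state it. Also, in the ``if'' direction of (1) you should take $\varepsilon/3$ rather than $\varepsilon/2$ to obtain the strict inequality $>2-\varepsilon$.
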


\begin{proof} Let us prove (1), being the proof of (2) completely analogous. Moreover, in the proof of (1), one direction is obvious. So assume $X$ has the $\kappa$-Daugavet property and fix a subspace $Y$ of $X$ with $\dens(Y)\leq \kappa$, a slice $S$ of $B_X$, and $\varepsilon>0$. Let $D$ be a dense set in $Y$ of cardinal $\kappa$ and set
    \[
    D':=\left\{\frac{y}{\Vert y\Vert}\colon y\in (D\cup (-D))\setminus\{0\} \right\}.
    \]
    By our assumption, there is an $x\in S$ such that 
    \[
   \left \Vert  \frac{y}{\Vert y \Vert}+x \right\Vert > 2-\varepsilon   
    \]
    holds for all $\frac{y}{\Vert y \Vert}\in D'$.
    Hence, for every $\frac{y}{\Vert y \Vert}\in D'$ we can find an $x^*_y\in S_{X^*}$ such that 
    \[
    x^*_y(y)\geq (1-\varepsilon)\Vert y\Vert\qquad \text{and} \qquad x^*_y(x)\geq 1-\varepsilon.
    \]
    Now, let $y\in D$ and $r\in \R$. Suppose first that $r\geq 0$. Then
    \[
    \Vert y+rx\Vert \geq x^*_y(y)+rx^*_y(x)\geq (1-\varepsilon)(\Vert y\Vert +r)=(1-\varepsilon)(\Vert y\Vert +|r|).
    \]
    Suppose now that $r<0$. Then
     \[
    \Vert y+rx\Vert = \Vert -y-rx\Vert \geq x^*_{-y}(-y)-rx^*_{-y}(x)\geq (1-\varepsilon)(\Vert y\Vert -r)=(1-\varepsilon)(\Vert y\Vert +|r|).\qedhere
    \]
    This proves that the thesis os (1) holds true for every $y\in D$. Since $D$ is norm-dense in $Y$, we conclude the statement.
\end{proof}

Our next aim is to show that we can replace slices with non-empty weakly open subsets or with convex combinations of slices. Indeed, following ideas behind \cite[Lemma 2]{shv}, where such result was obtained for the classical Daugavet property, we obtain the following result.

\begin{lemma}[Transfinite Shvidkoy's lemma]\label{lemma: Daugavet and CCS}
    Let $X$ be a Banach space and let $\kappa$ be an infinite cardinal. Then the following are equivalent:
    \begin{itemize}
        \item[(i)] $X$ has the $\kappa$-Daugavet property;
        \item[(ii)] for every family $\{y_\alpha\colon\alpha<\kappa\}\subseteq S_X$, every nonempty relatively weakly open set $U$ of $B_X$ and every $\varepsilon>0$ we can find $x\in U$ such that
$$\Vert y_\alpha+x\Vert>2-\varepsilon$$
holds for every $\alpha<\kappa$.
        \item[(iii)] for every family $\{y_\alpha\colon\alpha<\kappa\}\subseteq S_X$, every finite convex combination of slices $C$ of $B_X$ and every $\varepsilon>0$ we can find $x\in C$ such that
$$\Vert y_\alpha+x\Vert>2-\varepsilon$$
holds for every $\alpha<\kappa$.
    \end{itemize}
\end{lemma}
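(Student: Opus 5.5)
The implications (ii)$\Rightarrow$(i) and (iii)$\Rightarrow$(i) are immediate, since a slice is both a nonempty relatively weakly open subset of $B_X$ and a (trivial) convex combination of slices. We will show (i)$\Rightarrow$(iii) and then (iii)$\Rightarrow$(ii). The second step uses Bourgain's lemma: every nonempty relatively weakly open subset $U$ of $B_X$ contains a finite convex combination of slices of $B_X$. Granting this, (iii) applied to such a combination contained in $U$ produces the required $x\in U$.

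For (i)$\Rightarrow$(iii), we follow Shvidkoy's argument and iterate using Proposition~\ref{lemma: subspace Daugavet}(1). Fix $\{y_\alpha\colon\alpha<\kappa\}\subseteq S_X$, a combination $C=\sum_{i=1}^n\lambda_i S_i$ with $\lambda_i>0$ and $\sum_i\lambda_i=1$, and $\varepsilon>0$. Pick $\delta>0$ small in terms of $\varepsilon$ and $n$. We choose $x_1,\dots,x_n$ inductively. Let $Y_1=\cspn\{y_\alpha\colon \alpha<\kappa\}$, which has density at most $\kappa$. Proposition~\ref{lemma: subspace Daugavet}(1) gives $x_1\in S_1$ that is $(1-\delta)$-$\ell_1$-orthogonal to $Y_1$. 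Next let $Y_2=\cspn(Y_1\cup\{x_1\})$, whose density is still at most $\kappa$ because $\kappa$ is infinite, and choose $x_2\in S_2$ that is $(1-\delta)$-$\ell_1$-orthogonal to $Y_2$. We continue in this way up to $x_n$. Iterating the inequality $\|y+rx_k\|\geq(1-\delta)(\|y\|+|r|)$ from $k=n$ down to $k=1$ gives, for every $\alpha$,
\[
\Bigl\|y_\alpha+\sum_{i=1}^n\lambda_i x_i\Bigr\|\geq(1-\delta)^n\Bigl(1+\sum_{i=1}^n\lambda_i\|x_i\|\Bigr).
\]

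We still need the norms $\|x_i\|$ to be close to $1$. We may assume each slice has the form $S_i=S(B_X,f_i,\eta_i)$ with $\|f_i\|=1$. Shrinking $\eta_i$ only makes the slice smaller, so we may take $\eta_i<\delta$, and then every $x\in S_i$ satisfies $\|x\|\geq f_i(x)>1-\delta$. The displayed quantity is therefore at least $(1-\delta)^n(2-\delta)$, which exceeds $2-\varepsilon$ once $\delta$ is small enough. The point $x=\sum_i\lambda_ix_i$ lies in $C$, so (iii) follows.

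The only delicate points are the cardinality bookkeeping, namely that adding finitely many vectors keeps the density at most $\kappa$, which is exactly where the infinitude of $\kappa$ is used, and the appeal to Bourgain's lemma. Bourgain's lemma is classical and I would cite it rather than reprove it, since no transfinite modification is needed there.
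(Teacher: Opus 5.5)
Your proof is correct and follows essentially the same route as the paper: you iterate Proposition~\ref{lemma: subspace Daugavet}(1) over the increasing subspaces $\cspn(\{y_\alpha\colon\alpha<\kappa\}\cup\{x_1,\dots,x_{k-1}\})$ to get (i)$\Rightarrow$(iii), and then use Bourgain's lemma for (iii)$\Rightarrow$(ii). Your extra step of shrinking the slices to force $\|x_i\|>1-\delta$ is harmless but unnecessary: the inequality $\|y+rx_k\|\geq(1-\delta)(\|y\|+|r|)$ involves $|r|$ rather than $|r|\,\|x_k\|$, so the iteration already gives the bound $(1-\delta)^n(1+\sum_i\lambda_i)=2(1-\delta)^n$ that the paper uses.
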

\begin{proof}
    (i)$\implies$(iii). Let $\{y_\alpha\colon\alpha<\kappa\}\subseteq S_X$, let $C=\sum_{i=1}^n\lambda_iS_i$, where $S_i$ are slices of $B_X$, and $\varepsilon>0$. Pick $\delta\in(0,1)$ such that $2(1-\delta)^n>2-\varepsilon$. 

    Set $Y_0:=\cspn\{y_\alpha\colon\alpha<\kappa\}$. By Proposition~\ref{lemma: subspace Daugavet}, we can find $x_1\in S_1\cap S_X$ such that
    \[
    \|y+\lambda_1x_1\|\geq (1-\delta)(\|y\|+\lambda_1)
    \]
    holds for every $y\in Y_0$.

    Set now $Y_1:=\cspn(Y_0\cup\{x_1\})$. Again, by Proposition~\ref{lemma: subspace Daugavet}, we can find 
    $x_2\in S_2\cap S_X$ such that
    \[
    \|y+\lambda_2x_2\|\geq (1-\delta)(\|y\|+\lambda_2)
    \]
    holds for every $y\in Y_1$.

    By continuing this way, we can find $x_i\in S_i\cap S_X$ for every $i\in\{1,\dots,n\}$. Hence, $\sum_{i=1}^n\lambda_ix_i\in \sum_{i=1}^n\lambda_iS_i$ and, if we fix $\alpha$, then
    \begin{align*}
   \Big\Vert y_\alpha+\sum_{i=1}^n\lambda_ix_i\Big\Vert&=\Big\Vert \Big(y_\alpha+\sum_{i=1}^{n-1}\lambda_ix_i\Big)+\lambda_nx_n\Big\Vert\geq (1-\delta)\Big(\Big  \Vert y_\alpha+\sum_{i=1}^{n-1}\lambda_ix_i\Big\Vert+\lambda_n\Big)\\
   &\geq\ldots\geq(1-\delta)^n\Big(1+\sum_{i=1}^n\lambda_i\Big)=2(1-\delta)^n>2-\varepsilon.
    \end{align*}

(iii)$\implies$(ii). Follows from Bourgain's lemma, which says that every nonempty relatively weakly open subset of $B_X$ contains some finite convex combination of slices.

(ii)$\implies$(i). Is clear, because every slice is a relatively weakly open subset.
\end{proof}

With an adaptation of the above proof we can obtain a version for the $\kappa$-perfect Daugavet property in the following terms

\begin{proposition}\label{lemma: perfect Daugavet and CCS}
    Let $X$ be a Banach space and let $\kappa$ be an infinite cardinal. Then the following are equivalent:
    \begin{itemize}
        \item[(i)] $X$ has the $\kappa$-perfect Daugavet property;
        \item[(ii)] for every family $\{y_\alpha\colon\alpha<\kappa\}\subseteq S_X$ and every nonempty relatively weakly open set $U$ of $B_X$  we can find $x\in U$ such that
$$\Vert y_\alpha+x\Vert=2$$
holds for every $\alpha<\kappa$.
        \item[(iii)] for every family $\{y_\alpha\colon\alpha<\kappa\}\subseteq S_X$ and every finite convex combination of slices $C$ of $B_X$ we can find $x\in C$ such that
$$\Vert y_\alpha+x\Vert=2$$
holds for every $\alpha<\kappa$.
    \end{itemize}
\end{proposition}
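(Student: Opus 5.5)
We follow the scheme of Lemma~\ref{lemma: Daugavet and CCS}, proving (i)$\Rightarrow$(iii)$\Rightarrow$(ii)$\Rightarrow$(i). The last two implications need no change. For (iii)$\Rightarrow$(ii) we use Bourgain's lemma: every nonempty relatively weakly open subset $U$ of $B_X$ contains a finite convex combination of slices $C$, so any $x\in C$ with $\Vert y_\alpha+x\Vert=2$ for all $\alpha$ also lies in $U$. For (ii)$\Rightarrow$(i) it suffices to note that slices are relatively weakly open.

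For (i)$\Rightarrow$(iii), fix $\{y_\alpha\colon\alpha<\kappa\}\subseteq S_X$ and $C=\sum_{i=1}^n\lambda_i S_i$, where the $\lambda_i>0$ sum to $1$ and the $S_i$ are slices of $B_X$. Put $Y_0:=\cspn\{y_\alpha\colon\alpha<\kappa\}$, which has density at most $\kappa$. By Proposition~\ref{lemma: subspace Daugavet}(2) there is $x_1\in S_1$ such that
\[
\Vert y+rx_1\Vert=\Vert y\Vert+|r|\qquad\text{for all } y\in Y_0,\ r\in\R .
\]
Taking $y=0$ and $r=1$ gives $\Vert x_1\Vert=1$. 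Inductively, let $Y_{i}:=\cspn(Y_{i-1}\cup\{x_i\})$. Since $\kappa$ is infinite, $\dens(Y_i)\leq\kappa$, so Proposition~\ref{lemma: subspace Daugavet}(2) again gives $x_{i+1}\in S_{i+1}$ with $\Vert y+rx_{i+1}\Vert=\Vert y\Vert+|r|$ for all $y\in Y_i$ and $r\in\R$; in particular $\Vert x_{i+1}\Vert=1$.

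Now set $x:=\sum_i\lambda_ix_i\in C$. Since $y_\alpha+\sum_{i<n}\lambda_ix_i\in Y_{n-1}$, the telescoping computation becomes exact:
\[
\Big\Vert y_\alpha+\sum_{i=1}^n\lambda_ix_i\Big\Vert=\Big\Vert y_\alpha+\sum_{i=1}^{n-1}\lambda_ix_i\Big\Vert+\lambda_n=\dots=\Vert y_\alpha\Vert+\sum_{i=1}^n\lambda_i=2 .
\]
This holds for every $\alpha<\kappa$, which proves (iii).

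The only step needing care is the passage to subspaces of density at most $\kappa$ through Proposition~\ref{lemma: subspace Daugavet}(2). That proposition was stated to be analogous to part (1), and its proof relies on norm-density: if $\Vert y+rx\Vert=\Vert y\Vert+|r|$ holds on a dense subset $D$ of $Y$, it extends to all of $Y$ by continuity of both sides. This continuity argument also works with equality instead of the approximate inequality. To get the equality on $D$ itself, apply the $\kappa$-perfect Daugavet property to the normalized set $D'$. For each $y\in D\cup(-D)$ this gives $\Vert y/\Vert y\Vert+x\Vert=2$, and hence a functional $x^*\in S_{X^*}$ with $x^*(y)=\Vert y\Vert$ and $x^*(x)=1$; such a norming functional exists by Hahn--Banach. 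The sign cases $r\geq0$ and $r<0$ are then handled exactly as in the proof of part (1), with $\varepsilon=0$.
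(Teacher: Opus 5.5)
Your proposal is correct and follows the paper's own route. The paper obtains this proposition by adapting the proof of the transfinite Shvidkoy lemma: the cycle (i)$\Rightarrow$(iii)$\Rightarrow$(ii)$\Rightarrow$(i), applying Proposition~\ref{lemma: subspace Daugavet}(2) inductively along the increasing subspaces $Y_i$ so that the telescoping estimate becomes an exact equality, and using Bourgain's lemma for (iii)$\Rightarrow$(ii). Your check of part (2) of Proposition~\ref{lemma: subspace Daugavet} (norming functionals forced to satisfy $x^*(y)=\Vert y\Vert$ and $x^*(x)=1$, then passing to the limit by density) is also sound.
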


Let us rewrite the above two characterisations using nets.

\begin{corollary}\label{cor:weaknetdauga}
Let $X$ be a Banach space. Then 
\begin{enumerate}
\item $X$ has the $\kappa$-Daugavet property if, and only if, for every family $\{y_\alpha\colon\alpha<\kappa\}\subseteq S_X$ and every $x_0\in B_X$ there exists a net $\{x_i\}_{i\in I}$ of points of $B_X$ weakly converging to $x_0$ such that $\limsup\limits_{i\in I} \,\inf\limits_{\alpha<\kappa} \Vert y_\alpha+x_i\Vert=2$ holds for every $\alpha<\kappa$.
\item $X$ has the $\kappa$-perfect Daugavet property if, and only if, for every family $\{y_\alpha\colon\alpha<\kappa\}\subseteq S_X$ and every $x_0\in B_X$ there exists a net $\{x_i\}_{i\in I}$ of points of $B_X$ weakly converging to $x_0$ such that $\Vert y_\alpha+x_i\Vert=2$ holds for every $\alpha<\kappa$ and every $i\in I$
\end{enumerate}
\end{corollary}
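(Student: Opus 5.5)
The plan is to derive both items directly from the weakly-open-set characterisations, Lemma~\ref{lemma: Daugavet and CCS} (ii) and Proposition~\ref{lemma: perfect Daugavet and CCS} (ii), by using the neighbourhoods of $x_0$ as the index set of the net. Since those results are stated for infinite $\kappa$, we assume $\kappa$ infinite. (For finite $\kappa$ the same argument should work once (ii) is known in that case; Shvidkoy's argument gives this for the usual Daugavet property.) We read ``$\limsup_i \inf_\alpha \|y_\alpha+x_i\|=2$'' as saying that the infimum over $\alpha$ tends to $2$ along the net. Since all terms are $\leq 2$, this is the same as requiring, for every $\varepsilon>0$, that eventually $\|y_\alpha+x_i\|>2-\varepsilon$ for all $\alpha$.

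For the forward implication in (1), let $I$ be the set of pairs $(U,n)$ with $U$ a relatively weakly open subset of $B_X$ containing $x_0$ and $n\in\N$. Order $I$ by $(U,n)\leq (V,m)$ iff $V\subseteq U$ and $n\leq m$; this set is directed. For each $i=(U,n)$, Lemma~\ref{lemma: Daugavet and CCS}(ii) applied to $U$ and $\varepsilon=1/n$ gives $x_i\in U$ with $\|y_\alpha+x_i\|>2-1/n$ for all $\alpha<\kappa$. Then $x_i\to x_0$ weakly, because $x_i$ eventually lies in every weak neighbourhood of $x_0$, and $\inf_\alpha\|y_\alpha+x_i\|\geq 2-1/n\to 2$. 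For (2) the same construction works without the $n$-coordinate, using Proposition~\ref{lemma: perfect Daugavet and CCS}(ii); this gives $\|y_\alpha+x_i\|=2$ for every $i$ and every $\alpha$.

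For the converse, take a slice $S=S(B_X,f,\delta)$, a family $\{y_\alpha\}$, and $\varepsilon>0$. Pick $x_0\in S$; since $S$ is nonempty, such a point exists. Take the net given by the hypothesis. Because $S$ is relatively weakly open in $B_X$ and contains $x_0$, the net is eventually in $S$. By the convergence of the infimum, it is also eventually in $\{x:\inf_\alpha\|y_\alpha+x\|>2-\varepsilon\}$. Directedness of $I$ gives a common index $i$, and the point $x_i\in S$ satisfies $\|y_\alpha+x_i\|>2-\varepsilon$ for all $\alpha$, which is the $\kappa$-Daugavet property. In (2), any $x_i$ lying in $S$ satisfies $\|y_\alpha+x_i\|=2$, so $X$ has the $\kappa$-perfect Daugavet property.

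The only delicate point is the interpretation of the $\limsup$ in (1). If it is read literally as a $\limsup$ rather than a limit, then the converse needs a cofinal set of indices where $\inf_\alpha\|y_\alpha+x_i\|>2-\varepsilon$ to meet the tail of the net lying in $S$. This is automatic: a cofinal set always meets any tail of a directed set. So either reading works, and the main work is the routine construction of the directed set above.
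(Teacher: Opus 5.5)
Your proposal is correct and follows the intended argument. The paper gives no separate proof and presents the corollary as a direct net reformulation of Lemma~\ref{lemma: Daugavet and CCS}(ii) and Proposition~\ref{lemma: perfect Daugavet and CCS}(ii). Indexing by relatively weakly open neighbourhoods of $x_0$ (paired with $1/n$ in the non-perfect case), picking $x_0$ inside the slice for the converse, and treating the $\limsup$ through cofinality are all sound. Restricting to infinite $\kappa$ matches the standing hypothesis of those two results.
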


We conclude the section by exhibiting characterisations of the Daugavet and the perfect Daugavet properties which avoid the use of slices and will be used in the proof of Proposition~\ref{prop:hereaiidealargeDauga}. The proof follows similar lines to the analogous characterisation for the regular Daugavet property (c.f.\ e.g.\ \cite[Lemma 3.1.9]{kmrzw25}).

\begin{proposition}\label{prop:charlargeDPconvexhull}
Let $X$ be a Banach space. Then:
\begin{enumerate}
    \item $X$ has the $\kappa$-Daugavet property if, and only if, for every family $\{y_\alpha\colon\alpha<\kappa\}\subseteq S_X$ and every $\varepsilon>0$ the following condition holds
    $$B_X=\cconv\bigl(\{x\in B_X\colon  \forall \alpha<\kappa\ \Vert y_\alpha-x\Vert>2-\varepsilon\}\bigr).$$
     \item $X$ has the $\kappa$-perfect Daugavet property if, and only if, for every family $\{y_\alpha\colon\alpha<\kappa\}\subseteq S_X$ the following condition holds
    $$B_X=\cconv\bigl(\{x\in B_X\colon  \forall \alpha<\kappa \ \Vert y_\alpha-x\Vert=2\}\bigr).$$
\end{enumerate}
\end{proposition}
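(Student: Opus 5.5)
We prove both equivalences with the usual Hahn--Banach separation argument, treating (1) and (2) in parallel. For a family $\{y_\alpha\colon\alpha<\kappa\}\subseteq S_X$ and $\varepsilon>0$, set
$$A_\varepsilon:=\{x\in B_X\colon \forall\alpha<\kappa\ \Vert y_\alpha-x\Vert>2-\varepsilon\},\qquad A_0:=\{x\in B_X\colon \forall\alpha<\kappa\ \Vert y_\alpha-x\Vert=2\}.$$
The key observation is that $\Vert y_\alpha-x\Vert$ is the same as $\Vert (-y_\alpha)+x\Vert$, and that $\{-y_\alpha\}$ is again a family in $S_X$ of cardinality at most $\kappa$. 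So the minus sign in the statement is harmless: the defining condition of the $\kappa$-(perfect) Daugavet property, applied to $\{-y_\alpha\}$, says exactly that every slice of $B_X$ meets $A_\varepsilon$ (respectively $A_0$).

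\emph{Necessity.} Assume the $\kappa$-Daugavet property and suppose $B_X\neq\cconv(A_\varepsilon)$. Pick $x_0\in B_X\setminus\cconv(A_\varepsilon)$. By Hahn--Banach there are $f\in X^*$ and $\beta$ with
$$f(x_0)>\beta\geq\sup f\bigl(\cconv(A_\varepsilon)\bigr).$$
Since $\sup f(B_X)\geq f(x_0)>\beta$, the set $S:=\{x\in B_X\colon f(x)>\beta\}$ is a nonempty slice of $B_X$, and by construction it is disjoint from $A_\varepsilon$. But the $\kappa$-Daugavet property applied to $\{-y_\alpha\}$, the slice $S$ and $\varepsilon$ yields $x\in S$ with $\Vert -y_\alpha+x\Vert>2-\varepsilon$ for all $\alpha$, that is, a point $x\in S\cap A_\varepsilon$. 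This contradiction gives the equality. For (2) the argument is identical, with $A_0$ in place of $A_\varepsilon$ and the $\kappa$-perfect Daugavet property in place of the $\kappa$-Daugavet property.

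\emph{Sufficiency.} Assume the convex-hull condition. Let $S=S(B_X,f,\delta)$ be a slice, take a family $\{y_\alpha\}\subseteq S_X$, and let $\varepsilon>0$. Apply the hypothesis to the family $\{-y_\alpha\}$; then the corresponding set, call it $A'_\varepsilon$, consists of points $x\in B_X$ with $\Vert y_\alpha+x\Vert>2-\varepsilon$ for all $\alpha$, and $B_X=\cconv(A'_\varepsilon)$. Hence
$$\sup f(A'_\varepsilon)=\sup f\bigl(\cconv(A'_\varepsilon)\bigr)=\sup f(B_X),$$
so some $x\in A'_\varepsilon$ satisfies $f(x)>\sup f(B_X)-\delta$. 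This $x$ lies in $S$ and satisfies $\Vert y_\alpha+x\Vert>2-\varepsilon$ for all $\alpha$, which is exactly what the $\kappa$-Daugavet property requires. Part (2) follows in the same way with $A'_0$.

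There is no real obstacle in this argument. The two points that need care are the sign switch between $y_\alpha-x$ in the statement and $y_\alpha+x$ in the definition, which is absorbed by the symmetry of $S_X$ described above, and the fact that a supremum of a functional over a set equals its supremum over the closed convex hull of that set. Note also that no infiniteness of $\kappa$ is needed, consistent with the statement, which does not assume it.
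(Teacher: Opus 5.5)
Your proof is correct and follows the same route as the paper: the paper writes out no details and refers to the classical convex-hull characterisation of the Daugavet property, whose proof is exactly this Hahn--Banach separation argument. Replacing the family by $\{-y_\alpha\}$ correctly reconciles the minus sign in the statement with the plus sign in the definition of the $\kappa$-(perfect) Daugavet property.
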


\section{\texorpdfstring{$C(K)$}{C(K)} spaces}\label{section:C(K)}
In this section we will characterise the $\kappa$-Daugavet and $\kappa$-perfect Daugavet properties of a space of continuous functions $C(K)$ in terms of the compact space $K$. We start by the more classical case of finite $\kappa$. Recall that, for every Banach space $X$, if $\mathfrak{Dau}(X)<\omega$, then $\mathfrak{Dau}(X)=1$, see \cite[Lemma 3.1.14]{kmrzw25}. We do not know if the same holds for the index $\mathfrak{pDau}(X)$, but it is at least the case for $C(K)$ spaces as shown in Theorem~\ref{theo:CKDaugavet} below.

Let us present a bit of topological preliminaries. Recall that a set in a topological space is $G_\delta$ if it is a countable intersection of open sets. A point $x$ is called a \emph{$G_\delta$-point} if $\{x\}$ is a $G_\delta$ set. In our setting, when $K$ is a compact topological space, $x$ is a $G_\delta$-point if and only if there exists $f\colon K\longrightarrow [0,1]$ such that $\{x\}=f^{-1}(\{0\})$, \cite[Corollary 1.5.12]{engelking}. By considering $g=1-f$ or $g=1-2f$, we can also say that $x$ is a $G_\delta$-point if and only if $\{x\}=g^{-1}(\{1\})$ for some $g\colon K\longrightarrow [0,1]$ or, equivalently, for some $g\colon K\longrightarrow [-1,1]$. Another observation that we shall use in the proof below is that if we have a continuous function $f\colon K\longrightarrow \mathbb{R}$ and an open set $W$ in $K$, and $F = W\cap f^{-1}(\{t\})$ was finite, then all points in $F$ would be $G_\delta$-points. Indeed, for $x\in F$, take an open neighborhood $V\subset W$ of $x$ that separates it from the rest of points of $F$ and notice that $\{x\} = V\cap \bigcap_{n<\omega}f^{-1}\bigl((t-1/n,t+1/n)\bigr)$.
Observe that it is immediate that in a metric space, all points are $G_\delta$.

\begin{theorem}\label{theo:CKDaugavet}
    Let $K$ be a compact space.
    \begin{enumerate}
        \item $C(K)$ has the Daugavet property if and only if $K$ has no isolated points. Moreover, if $K$ has an isolated point, then $\mathfrak{oct}(C(K))=1$.
        \item $C(K)$ has the perfect Daugavet property if and only if $K$ has no $G_\delta$-points. Moreover, in this case, $\mathfrak{pDau}(C(K))\geq \omega$. 
    \end{enumerate}
\end{theorem}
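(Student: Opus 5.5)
For part (1), the equivalence with the absence of isolated points is the classical result (see \cite{kmrzw25}), so I will only prove the ``moreover'' part. Let $t_0$ be an isolated point of $K$ and take $y=\mathbbm{1}_{\{t_0\}}\in S_{C(K)}$, which is continuous because $\{t_0\}$ is clopen. Given $x\in S_{C(K)}$, I want to show that $\min\{\|y+x\|,\|y-x\|\}$ is close to $2$ only if $|x(t_0)|$ is close to $1$ with both signs, which is impossible. Indeed, at any $t\neq t_0$ we have $|y(t)\pm x(t)|\leq 1$, so $\|y\pm x\|=\max\{1,|1\pm x(t_0)|\}$. If $\|y+x\|>3/2$ and $\|y-x\|>3/2$, then $x(t_0)>1/2$ and $x(t_0)<-1/2$, a contradiction. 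Hence $C(K)$ fails $1$-octahedrality, that is, $\mathfrak{oct}(C(K))=1$.

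For part (2), I first prove necessity. Suppose $t_0$ is a $G_\delta$-point and take $g\colon K\to[0,1]$ with $\{t_0\}=g^{-1}(\{1\})$, so $g\in S_{C(K)}$. Consider the slice $S=\{x\in B_{C(K)}\colon x(t_0)<\ldots\}$; more concretely, I use the functional $-\delta_{t_0}$ and take $S=\{x\in B_{C(K)}\colon -x(t_0)>0\}$. For $x\in S$, the condition $\|g+x\|=2$ requires a point $t$ with $|g(t)+x(t)|=2$. Since $|x(t)|\leq 1$ and $0\leq g(t)\leq 1$, this forces $g(t)=1$ and $x(t)=1$, so $t=t_0$ and $x(t_0)=1$, which contradicts $x(t_0)<0$. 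Hence $\mathfrak{pDau}(C(K))=1$.

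For sufficiency, assume $K$ has no $G_\delta$-points, and I will show the $\omega$-perfect Daugavet property. By Proposition~\ref{lemma: subspace Daugavet}(2), it suffices to take a countable family $\{y_n\}\subseteq S_{C(K)}$ and a slice $S=S(B_{C(K)},\mu,\alpha)$ with $\mu$ a regular Borel measure of norm one, and to find $x\in S$ with $\|y_n+x\|=2$ for all $n$. Let me plan the construction. First, pick $x_0\in S$, and using Urysohn and the regularity of $\mu$, find a nonempty open set $W$ and a modification that is still in $S$. The standard Daugavet trick is as follows. Take a point where $|\mu|$ is small: since $K$ has no isolated points, there is a nonempty open $W$ with $|\mu|(W)$ as small as desired. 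For each $n$ choose a point $t_n$ with $|y_n(t_n)|=1$. The issue is that $t_n$ need not lie in $W$, and the $t_n$ are countably many. Instead, I will require $|x(s)|=1$ at some point $s$ where $y_n(s)=x(s)$ has modulus $1$. So I need points where $y_n$ attains norm and where I can freely set $x$.

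The key observation is the following. The set $A_n=\{t\colon |y_n(t)|=1\}$ is a nonempty closed $G_\delta$ set, and with no $G_\delta$-points each such $A_n$ is infinite. More strongly, any nonempty $G_\delta$ set contains no isolated-in-itself point that would be $G_\delta$ (by the remark preceding the theorem), so $A_n$ is perfect-ish. The plan is then to choose disjoint small open sets $V_n$ meeting $A_n$, with $\sum_n|\mu|(V_n)<\alpha'$ small, and inside each $V_n$ a closed $G_\delta$ neighbourhood piece. I then define $x=x_0$ outside $\bigcup V_n$ and adjust $x$ on $V_n$ so that $x(s_n)=y_n(s_n)\in\{\pm1\}$ at some $s_n\in A_n\cap V_n$. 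Making $x$ continuous when the $V_n$ accumulate is the main obstacle. To handle it, I would use $\|y_n+x\|=2$, which only needs attainment at one point, and allow the $s_n$ to be the same point for many $n$ if they accumulate. Alternatively, I can choose a single nonempty closed $G_\delta$ set $Z\subseteq W$ on which every $y_n$ is handled. Concretely, I inductively shrink the sets to $F_n$, a decreasing sequence of nonempty closed $G_\delta$ sets with small measure, such that on $F_{n}$ the function $y_n$ is nearly constant of modulus close to $1$, or rather such that $\bigcap F_n$ contains a point where $|y_n|=1$. Then $\bigcap_n F_n$ is a nonempty $G_\delta$ set, hence infinite, and I set $x=\pm1$ on it appropriately. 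Making the signs consistent across all $n$ is the delicate point that I expect to need real care: it requires choosing the $F_n$ so that each $y_n$ attains both $\pm1$, or attains one sign compatibly. I plan to handle this by applying the no-$G_\delta$-point hypothesis in the form ``level sets $f^{-1}(t)\cap W$ are infinite'' to split $F_n$ repeatedly, so that $x$ can be set to $1$ on one piece and to $-1$ on another, following the Tietze extension with the $x_0$ values outside a small-measure neighbourhood.
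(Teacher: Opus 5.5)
Your proof of part (1) and of the necessity half of part (2) is correct and is essentially the paper's argument. (In (1), $\|y\pm x\|\le\max\{1,|1\pm x(t_0)|\}$ is only an inequality, but the inequality is all you use.)

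The gap is in the sufficiency half of (2). You set out to prove the $\omega$-perfect Daugavet property, that is, to handle a countable family $\{y_n\}$. The theorem only claims $\mathfrak{pDau}(C(K))\geq\omega$, which means the $n$-perfect Daugavet property for each finite $n$. The countable version is false in general. Suppose $K$ has no $G_\delta$-points but has a countable $\pi$-basis $\{W_n\}$; an example is the Gleason cover of the Cantor set, for which the paper obtains $\mathfrak{pDau}(C(K))=\omega$ in Proposition~\ref{dualremark}. Take $f_n\colon K\to[0,1]$ with $\max f_n=1$ and $f_n=0$ off $W_n$. Any $x\in B_{C(K)}$ with $\|x+f_n\|=\|x-f_n\|=2$ for all $n$ must take both values $1$ and $-1$ inside every $W_n$. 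But the nonempty open set $\{x>1/2\}$ contains some $W_m$, a contradiction.

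This is exactly where your sketch stalls. No decreasing sequence $(F_n)$ can have $\bigcap_n F_n$ meeting every norming set $A_n$, since these may be pairwise disjoint. The disjoint-$V_n$ construction cannot be made continuous at accumulation points. The ``sign consistency'' you flag as delicate is in fact impossible.

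What is missing is simply to work with finitely many $y_1,\dots,y_n$; then nothing accumulates and your first idea goes through.
\begin{itemize}
\item Each $A_i=\{t\colon |y_i(t)|=1\}$ is infinite: if it were finite, its points would be $G_\delta$-points by the observation preceding the theorem.
\item $|\mu|$ has only finitely many atoms of mass $\geq\delta$. So you can choose pairwise distinct $s_i\in A_i$ with $|\mu|(\{s_i\})<\delta$.
\item By outer regularity, choose pairwise disjoint open sets $V_i\ni s_i$ with $|\mu|(V_i)<2\delta$.
\item Put $x=(1-\sum_i\phi_i)x_0+\sum_i\phi_i\,y_i(s_i)$. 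Here $x_0$ lies in the slice with some room, and $\phi_i\colon K\to[0,1]$ are Urysohn functions with $\phi_i(s_i)=1$ and support in $V_i$.
\end{itemize}
Then $\|x\|\leq 1$, $\|y_i+x\|=2$, and $|\int x\,d\mu-\int x_0\,d\mu|<4n\delta$.

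The paper reaches the same conclusion by a slightly different route. It uses the Hahn decomposition and inner regularity to get closed sets $A_0,B_0$ carrying almost all of $|\mu|$. It then chooses the norming points in the open set $K\setminus(A_0\cup B_0)$, using that a finite set $W\cap f^{-1}(t)$ would consist of $G_\delta$-points. Finally it defines $g$ directly by Tietze's theorem rather than as a perturbation of a point of the slice.
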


\begin{proof}
Part (1) is known, see \cite[Theorem 3.3.1]{kmrzw25}. Indeed, if $K$ has an isolated point $a$, then the characteristic function $y=\chi_{\{a\}}$ of that point satisfies that for each $f\in B_{C(K)}$ either $\|y+ f\|\leq 1$ (if $f(a)\geq 0$) or $\|y-f\|\leq 1$ (if $f(a)\leq 0$), and this shows that $\mathfrak{oct}(C(K))=1$. For part (2) suppose first that $C(K)$ has a $G_\delta$-point $x$ and we prove that $K$ fails the perfect Daugavet property, and hence $\mathfrak{pDau}(C(K))=1<\omega$. Then there exists a continuous function $f\colon K\longrightarrow [0,1]$ such that $f^{-1}(\{1\})=\{x\}$. Consider the slice $S=\{g\in B_{C(K)} \colon g(x)<-1/2\}$. We check that there is no $g\in S$ such that $\|g+f\|=2$. Fix $g\in S$. First, observe that $f(y)+g(y)\geq 0-1 = -1$ for all $y\in K$. Second, notice that $f(x)+g(x)<1-1/2=1/2$ since $g\in S$. And third, $f(y)+g(y)<1+1=2$ if $y\neq x$ because $f^{-1}(\{1\})=\{x\}$. So we conclude that $\|f+g\|<2$. Now, for the converse, we suppose that $K$ has no $G_\delta$-points and we prove that $\mathfrak{pDau}(C(K))\geq\omega$ and, in particular, that $C(K)$ has the perfect Daugavet property. We have to show that $C(K)$ has the $n$-perfect Daugavet property for every $n$, so fix $f_1,\ldots,f_n\in S_{C(K)}$ and $S=\{g\in B_{C(K)} \colon \int_K g \cdot d\mu>1-\varepsilon\}$ a slice of the ball, given by a finite regular Borel measure $\mu$ by virtue of the Riesz representation theorem for $C(K)^*$. Normalising, we assume that $\|\mu\|=1$ and $\varepsilon>0$. For $i=1,\ldots,n$ we have a point $x_i\in K$ and $\eta_i\in\{-1,1\}$ where $f_i(x_i)=\eta_i$. Since $K$ has no $G_\delta$ points, the sets $f_i^{-1}(\{\eta_i\})$ are infinite, so we can suppose that the points $x_1,\ldots,x_n$ are all different. By the Hahn decomposition theorem of measures applied to $\mu|_{K\setminus\{x_1,\ldots,x_n\}}$, we can write $K\setminus\{x_1,\ldots,x_n\} = A\cup B$ as the disjoint union of two Borel sets $A,B$ such that $\mu|_A$ is a positive measure and $\mu|_B$ is a negative measure. Since $\mu$ is a regular measure, we can find two closed sets $A_0\subseteq A$ and $B_0\subseteq B$ such that $|\mu|(A\setminus A_0)<\varepsilon/4$ and  $|\mu|(B\setminus B_0)<\varepsilon/4$. Again, since $K$ has no $G_\delta$-points, the sets $f_i^{-1}(\{\eta_i\}) \cap [K\setminus(A_0\cup B_0)]$ must be infinite (they are nonempty since $x_i$ is there). So we can pick points $y_i\in K\setminus(A_0\cup B_0)$ which are all different (and different from the $x_i$'s) such that $f_i(y_i)=\eta_i$. By Tietze's extension theorem, there exists a continuous function $g\colon K\longrightarrow [-1,1]$ such that 
$$g|_{A_0\cup\{y_i \colon \eta_i=1\}}=1, \ g|_{B_0\cup\{y_i \colon \eta_i=-1\}}=-1 \text{ and } g(x_i) = \begin{cases}
    +1 & \text{ if }\mu(\{x_i\})\geq 0,\\
    -1 & \text{ if } \mu(\{x_i\})<0.
\end{cases}$$ 
First, $\|f_i+g\|=2$ because $f_i(y_i)=g(y_i)=\eta_i$. To show that $g\in S$, the computation is
\begin{align*}
\int_K g\cdot d\mu &= \int_{A_0} 1 \cdot d\mu + \int_{B_0}(-1) \cdot d\mu + \int_{\{x_1,\ldots,x_n\}} g \cdot d\mu + \int_{K\setminus (A_0\cup B_0\cup\{x_1,\ldots,x_n\})}g \cdot d\mu \\ &=|\mu|(A_0) + |\mu|(B_0)+ \sum_{i=1}^n|\mu|(\{x_i\}) + \int_{K\setminus (A_0\cup B_0\cup\{x_1,\ldots,x_n\})}g\cdot d\mu\\& > (\|\mu\| - \varepsilon/2) -\varepsilon/2 = 1-\varepsilon.\qedhere
\end{align*}
\end{proof}

\begin{remark}\label{remark:C(K)Gdeltarigidoh}
If $K$ has a $G_\delta$ point then $\mathfrak{rig}\text{-}\mathfrak{oct}(C(K))=1$. Indeed, under the assumption, there exists a continuous function $f\colon K\longrightarrow [0,1]$ such that $f^{-1}(\{1\})=\{x\}$ for some $x\in K$. Now, given $g\in B_{C(K)}$, we claim that $\min\{\Vert f+g\Vert,\Vert f-g\Vert\}<2$. Indeed, if $\Vert f+g\Vert=2$ then there exists $t\in K$ such that $\vert f(t)+g(t)\vert=2$, which implies $\vert f(t)\vert=1=\vert g(t)\vert$. By the condition on $f$ we infer $t=x$. Since $\vert f(x)+g(x)\vert=2$ then $g(x)=1$. Now, we get that $\vert f(x)-g(x)\vert=0$, whereas if $t\neq x$ we get $\vert f(t)-g(t)\vert\leq \vert f(t)\vert+\vert g(t)\vert\leq 1+f(t)<2$ since $t\neq x$. Now $\Vert f-g\Vert=\max\limits_{t\in K}\vert f(t)-g(t)\vert<2$, as desired.
\end{remark}

In order to study the transfinite Daugavet property for $C(K)$ spaces, the following is the key topological cardinal invariant.

\begin{definition}\label{def:reapingnumber}
    Given a compact space $K$, the cardinal $\mathfrak{r}(K)$, that we call the \emph{reaping number} of $K$, is the least cardinality of a family $\mathcal{F}$ of nonempty open subsets of $K$ such that for every two disjoint closed sets $L_1$ and $L_2$ there exists $W\in\mathcal{F}$  such that either $W\cap L_1 = \emptyset$ or $W\cap L_2=\emptyset$.
\end{definition}

We state a lemma that we need now, but we will see later that there is a more general fact behind it, see Remark~\ref{remark Laflamme new}.

\begin{lemma}\label{rnn}
    Given a  compact topological space $K$, the reaping number $\mathfrak{r}(K)$ equals the least cardinality of a family $\mathcal{F}$ of nonempty open sets such that for every three pairwise disjoint closed sets $L_1,L_2,L_3$ there exists $W\in\mathcal{F}$ such that $W\cap L_i = \emptyset$ for some $i\in\{1,2,3\}$.
\end{lemma}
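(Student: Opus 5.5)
Let $\mathfrak{r}_3(K)$ denote the least cardinality of a family as in the statement, i.e.\ one that ``reaps'' triples of pairwise disjoint closed sets. One inequality is immediate. If $\mathcal{F}$ witnesses $\mathfrak{r}(K)$, take pairwise disjoint closed $L_1,L_2,L_3$. Then $L_1$ and $L_2\cup L_3$ are disjoint closed sets, so some $W\in\mathcal{F}$ misses $L_1$ or misses $L_2\cup L_3$. In either case $W$ misses some $L_i$. Hence $\mathfrak{r}_3(K)\leq\mathfrak{r}(K)$.

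For the converse I will start from a family $\mathcal{F}$ witnessing $\mathfrak{r}_3(K)$ and build a family $\mathcal{F}'$ of the same cardinality that reaps pairs. The natural candidate is
\[
\mathcal{F}'=\{W\cap W' \colon W,W'\in\mathcal{F},\ W\cap W'\neq\emptyset\}\cup\mathcal{F},
\]
which has cardinality $|\mathcal{F}|$ when $\mathcal{F}$ is infinite. The finite case is degenerate and can be handled separately, since then $K$ must be finite or the statement is trivial.

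Now suppose, aiming for a contradiction, that $L_1,L_2$ are disjoint closed sets which no member of $\mathcal{F}'$ reaps. Then every $V\in\mathcal{F}'$ meets both $L_1$ and $L_2$. The key step is to split $L_1$ into two disjoint closed pieces $L_1=L_1^a\cup L_1^b$, or more flexibly to shrink it to disjoint closed sets $L_1^a,L_1^b\subseteq L_1$. The triple $L_1^a,L_1^b,L_2$ is then pairwise disjoint, so some $W\in\mathcal{F}$ misses one of them. If $W$ misses $L_2$, this contradicts our assumption. So $W$ misses $L_1^a$ or $L_1^b$, and the task is to choose the splitting so that this also yields a contradiction.

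Choosing this splitting is the main obstacle: a single split does not work, since $W$ may meet $L_1$ only inside $L_1^b$. I would use the non-reaped hypothesis iteratively. For each $W\in\mathcal{F}$, the trace $W\cap L_1$ is nonempty, and likewise $W\cap W'\cap L_1$ is nonempty whenever $W\cap W'\neq\emptyset$. I would first try a splitting argument based on $L_2$: pick closed neighbourhoods $U_1\supseteq L_1$ and $U_2\supseteq L_2$ that are disjoint (by normality), and consider the triple $L_1$, $L_2$, and a third closed set $L_3$ disjoint from both. If some $W$ misses $L_3$ for every choice of such $L_3$, this contradicts minimality. Failing that, I would instead use a transfinite/maximality argument: assume the triple version, take $L_1,L_2$ not reaped by $\mathcal{F}'$, and apply the triple property to $(L_1\cap \overline{G},\ L_1\setminus G,\ L_2)$ for suitable open $G$, using the intersections $W\cap W'$ to pass between the two pieces. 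I would also check Remark~\ref{remark Laflamme new} for the intended general mechanism, which is presumably a Laflamme-type ``$n$ pieces versus 2 pieces'' argument with products of members of the family.
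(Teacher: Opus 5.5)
Your first inequality, $\mathfrak{r}_3(K)\leq\mathfrak{r}(K)$, is correct and is exactly the paper's argument. The converse, however, is not proved. The family $\mathcal{F}'$ of pairwise intersections need not reap pairs, and the splitting step is left open.

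The candidate $\mathcal{F}'$ genuinely fails. Take $K=[0,1]$ and $\mathcal{F}=\{[0,1/n)\cup(1-1/n,1]\colon n\geq 3\}$.
\begin{itemize}
\item This family is closed under finite intersections, so $\mathcal{F}'=\mathcal{F}$.
\item It reaps triples: among three pairwise disjoint closed sets, one misses $\{0,1\}$. By compactness, that set misses $[0,1/n)\cup(1-1/n,1]$ for $n$ large.
\item It is countable, which is the least possible size here.
\item Yet every member meets both $\{0\}$ and $\{1\}$, so the pair $\{0\},\{1\}$ is not reaped.
\end{itemize}
The example also shows why splitting inside $L_1$ cannot work, since $L_1=\{0\}$ cannot be split at all. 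This applies both to your pieces $L_1^a,L_1^b\subseteq L_1$ and to $L_1\cap\overline{G}$, $L_1\setminus G$, which moreover need not be disjoint. Separately, a finite triple-reaping family does not force $K$ to be finite: a single isolated point gives one.

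The missing idea is to argue by contradiction from $|\mathcal{F}|<\mathfrak{r}(K)$. You then use the minimality of $\mathfrak{r}(K)$ a second time, on a family that depends on the non-reaped pair.
\begin{itemize}
\item Suppose $L_1,L_2$ are disjoint closed sets meeting every member of $\mathcal{F}$. Choose an open $W\supseteq L_1$ with $\overline{W}\cap L_2=\emptyset$.
\item The traces $\{A\cap W\colon A\in\mathcal{F}\}$ are nonempty open sets, fewer than $\mathfrak{r}(K)$ many, so they are not reaping. Hence there are disjoint closed $L_0',L_1'$ meeting each $A\cap W$.
\item Then $L_0'\cap\overline{W}$, $L_1'\cap\overline{W}$ and $L_2$ are pairwise disjoint closed sets meeting every member of $\mathcal{F}$. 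This contradicts the triple property.
\end{itemize}
The new pieces live in the closed neighbourhood $\overline{W}$ of $L_1$, not in $L_1$ itself. In the example above, the relevant reaping family is $\{[0,1/n)\colon n\geq 3\}$, the traces on a neighbourhood of $0$. It is not among your pairwise intersections.
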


\begin{proof}
    Let $\ddot{\mathfrak{r}}_{3,3}(K)$ be the cardinality defined in the statement of the Lemma. Such a notation will be justified later. If a family $\mathcal{F}$ is as in Definition~\ref{def:reapingnumber}, then $\mathcal{F}$ is also as in the lemma, just  by considering two disjoint closed sets $L_1$ and $L_2\cup L_3$. This proves that $\ddot{\mathfrak{r}}_{3,3}(K)\leq \mathfrak{r}(K)$. If this inequality was strict, then we would have a family $\mathcal{F}$ with $|\mathcal{F}|< \mathfrak{r}(K)$ that satisfies the statement of the lemma for three closed sets.  Since $|\mathcal{F}|<\mathfrak{r}(K)$, we have disjoint closed sets $L_1$ and $L_2$ such that every element of $\mathcal{F}$ intersects both $L_1$ and $L_2$. Consider an open set $W\supset L_1$ whose closure is disjoint from $L_2$. Since $\{A\cap W \colon A\in\mathcal{F}\}$ is a family of nonempty open sets of cardinality less than $\mathfrak{r}(K)$, there exist two disjoint closed sets $L'_0$ and $L'_1$ that intersect all $A\cap W$ for $A\in\mathcal{F}$. The sets $L'_0\cap \overline{W}, L'_1\cap \overline{W}$ and $L_2$
    are three pairwise disjoint closed sets that intersect all elements of $\mathcal{F}$, which contradicts that $\mathcal{F}$ is as in the lemma.
\end{proof}

 We are in a position to state the main result of this section.

\begin{theorem}\label{theo:CK}
   If $K$ is a compact space, then $$ \mathfrak{pDau}(C(K))=\mathfrak{Dau}(C(K)) = \mathfrak{oct}(C(K))=\mathfrak{rig}\text{-}\mathfrak{oct}(C(K))=\mathfrak{r}(
    K)$$ 
    unless $K$ is a compact space with a $G_\delta$-point but without any isolated point, in which case $$\mathfrak{pDau}(C(K))=\mathfrak{rig}\text{-}\mathfrak{oct}(C(K)) = 1 <\omega = \mathfrak{Dau}(C(K)) = \mathfrak{oct}(C(K))=\mathfrak{r}(
    K).$$
\end{theorem}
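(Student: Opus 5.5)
The plan is to establish the cycle of inequalities
\[
\mathfrak{pDau}(C(K))\leq \mathfrak{rig}\text{-}\mathfrak{oct}(C(K))\leq \mathfrak{oct}(C(K))\leq \mathfrak{r}(K)\leq \mathfrak{pDau}(C(K)),
\]
together with $\mathfrak{pDau}\leq\mathfrak{Dau}\leq\mathfrak{oct}$, for infinite cardinals, and then treat the finite cases with Theorem~\ref{theo:CKDaugavet}. The first inequalities are formal. A perfect-Daugavet vector found in the slice $B_{C(K)}$ itself is a rigid octahedral vector, once the family is closed under signs; for $\kappa\geq\omega$ this is harmless, and $x$ may be chosen of norm one since $\|y+x\|=2$ forces $\|x\|=1$. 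Similarly, rigid octahedrality implies octahedrality, perfect Daugavet implies Daugavet, and Daugavet implies octahedral.

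\textbf{Step 1: $\mathfrak{oct}(C(K))\leq\mathfrak{r}(K)$.} Take a family $\mathcal F$ of nonempty open sets witnessing $\mathfrak r(K)$. For each $W\in\mathcal F$, pick a point $p_W\in W$ and an Urysohn function $y_W\colon K\to[0,1]$ with $y_W(p_W)=1$ and support in $W$; also include $-y_W$. Suppose some $x\in S_{C(K)}$ satisfied $\|x\pm y_W\|>2-\varepsilon$ for all $W$, with $\varepsilon$ small (say $1/2$). Put $L_1=\{x\geq 1/2\}$ and $L_2=\{x\leq -1/2\}$. Both are closed and disjoint. Moreover $\|x+y_W\|>3/2$ forces some $t\in W$ with $x(t)>1/2$, and similarly with the minus sign, so every $W$ meets both $L_1$ and $L_2$. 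This contradicts the choice of $\mathcal F$.

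\textbf{Step 2: $\mathfrak r(K)\leq\mathfrak{pDau}(C(K))$ for $\kappa<\mathfrak r(K)$ infinite, assuming $K$ has no $G_\delta$-points.} This is the main obstacle. Given $\{f_\alpha\}_{\alpha<\kappa}\subseteq S_{C(K)}$ and a slice determined by a measure $\mu$, we approximate as in Theorem~\ref{theo:CKDaugavet}. We obtain closed sets $A_0$ and $B_0$ carrying almost all of $|\mu|$, plus finitely many atoms. For each $\alpha$ we need a point $t_\alpha$ outside $A_0\cup B_0$ with $f_\alpha(t_\alpha)=\eta_\alpha=\pm1$, together with a continuous $g$ equal to $\eta_\alpha$ there.

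Consider the sets $F_\alpha=f_\alpha^{-1}(\eta_\alpha)\setminus(A_0\cup B_0)$. The idea is to shrink these to open sets $W_{\alpha,n}=\{|f_\alpha-\eta_\alpha|<1/n\}\setminus(A_0\cup B_0)$, nonempty after choosing $\eta_\alpha$ suitably. One possible snag is that $\|f_\alpha\|=1$ might be attained only on $A_0\cup B_0$. This is handled by first enlarging the excluded region slightly, taking open $U\supseteq A_0\cup B_0$ with $|\mu|(U\setminus (A_0\cup B_0))$ small, and by using that no point is $G_\delta$, so level sets through boundary points meet the complement.

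The family $\{W_{\alpha,n}\}$ has cardinality less than $\mathfrak r(K)$. By Lemma~\ref{rnn} (three sets rather than two), there are pairwise disjoint closed $L_1,L_2,L_3$ each meeting every $W_{\alpha,n}$. Use one set to carry the value $+1$ and another the value $-1$, with $L_3$ absorbing $A_0\cup B_0$ issues. Set $g=1$ on $L_1\cup A_0$ and $g=-1$ on $L_2\cup B_0$, arranged disjointly after shrinking, and extend by Tietze.

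Then $\|f_\alpha+g\|$ is approximately $2$, but equality still has to be obtained. For this, pass to $G_\delta$ closed sets $\bigcap_n \overline{W_{\alpha,n}}$ and apply compactness. Each $L_i$ meets every $\overline{W_{\alpha,n}}$, so by compactness it meets $f_\alpha^{-1}(\eta_\alpha)$ within the closure, giving exact values $2$. The delicate bookkeeping is ensuring both signs $\eta_\alpha$ are served; including $\pm f_\alpha$ resolves this.

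\textbf{Step 3: finite and exceptional cases.} If $K$ has an isolated point, then Theorem~\ref{theo:CKDaugavet}(1) gives that all indices are $1$. Also $\mathfrak r(K)=1$, witnessed by $\mathcal F=\{\{a\}\}$. If $K$ has no isolated points, then every nonempty open set is infinite, so any finite $\mathcal F$ can be met by two disjoint finite sets, and $\mathfrak r(K)\geq\omega$. With no $G_\delta$-points, Step 2 gives the common value $\mathfrak r(K)$. With a $G_\delta$-point and no isolated point, Theorem~\ref{theo:CKDaugavet}(2) and Remark~\ref{remark:C(K)Gdeltarigidoh} give $\mathfrak{pDau}=\mathfrak{rig}\text{-}\mathfrak{oct}=1$. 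For $\mathfrak{Dau}=\mathfrak{oct}=\mathfrak r(K)$, rerun Step 2 with the $\varepsilon$-version: pick points in $W_{\alpha,n}$ with $n>1/\varepsilon$, which needs no exact attainment and so no absence of $G_\delta$-points.
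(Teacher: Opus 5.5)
Your Step~1 is correct and is the paper's argument (with $1/2$ in place of $1/8$), and the isolated-point bookkeeping in Step~3 is fine. The genuine gap is in Step~2, which is the substantive direction. Fixing the slice through a measure $\mu$, and then working outside closed sets $A_0,B_0$ that carry almost all of $|\mu|$, does not survive the passage from finitely many functions to $\kappa$ many. There are two problems.

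First, the sets $W_{\alpha,n}$ may be empty. Take $K=\{0,1\}^{\omega_1}$, which has no $G_\delta$-points and $\mathfrak{r}(K)>\omega$, with $\mu$ the Haar measure. Nothing in your construction prevents $A_0$ from being a clopen set with $\mu(A_0)>1-\varepsilon/4$. If the $f_\alpha$ are indicator functions of clopen subsets of $A_0$, then $|f_\alpha|$ is close to $1$ only inside $A_0$, for either sign $\eta_\alpha$. Theorem~\ref{theo:CKDaugavet} avoided this by deleting the finitely many norming points before the Hahn decomposition. For $\kappa$ points this would require choosing the norming points inside a set of small outer $|\mu|$-measure, which you never arrange. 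Your proposed remedies do not help. Enlarging the excluded region to $U$ only shrinks the $W_{\alpha,n}$. The absence of $G_\delta$-points makes a nonempty set $W\cap f^{-1}(t)$ infinite, but cannot produce points outside $A_0\cup B_0$ when there are none.

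Second, even if every $W_{\alpha,n}$ is nonempty, the closed sets given by Lemma~\ref{rnn} only \emph{meet} these open sets. They may accumulate on $B_0$ (respectively $A_0$), so the prescription ``$g=1$ on $L_1\cup A_0$, $g=-1$ on $L_2\cup B_0$'' can be contradictory. Shrinking the $L_i$ away from $A_0\cup B_0$ can destroy the property of meeting every $W_{\alpha,n}$, and saying that $L_3$ ``absorbs'' this does not explain how.

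What is missing is a mechanism that makes the modification negligible for the slice functional. The paper drops the measure entirely and uses Corollary~\ref{cor:weaknetdauga}. Given $g\in B_{C(K)}$ and $W^k_\alpha=\{\varepsilon_\alpha f_\alpha>1-1/k\}$, it applies Lemma~\ref{rnn} recursively to the families $\{V_{n-1}\cap W^k_\alpha\}$, sets $h_n=\psi^0_n g+\psi^+_n-\psi^-_n$, and continues inside a neighbourhood $V_n$ of $L^0_n$. This is the real role of the third set: it still meets every $W^k_\alpha$, so the recursion can go on. The perturbations $h_n-g$ are bounded and disjointly supported, hence weakly null, so $h_n$ eventually enters any slice containing $g$. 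Meanwhile $h_n=\varepsilon_\alpha$ on $L^{\varepsilon_\alpha}_n$ gives $\|f_\alpha+h_n\|=2$ exactly; your compactness remark for exactness is fine. In your measure-theoretic language, the equivalent fix would be to produce many pairwise disjoint pairs of such closed sets and keep a pair whose neighbourhood has small $|\mu|$-measure.

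Two smaller points. In the exceptional case you never show $\mathfrak{r}(K)\leq\omega$; this follows from the countable reaping family $\{f^{-1}([0,1/n))\colon n<\omega\}$, where $\{x\}=f^{-1}(\{0\})$. Also, the $\varepsilon$-rerun of Step~2 is unnecessary, and it inherits the same gap. The bound $\omega\leq\mathfrak{Dau}(C(K))$ already follows from Theorem~\ref{theo:CKDaugavet}(1), together with the fact that the Daugavet property gives the $n$-Daugavet property for every $n$, and Step~1 then closes the chain.
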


\begin{proof} We have that $\mathfrak{pDau}(X)\leq \mathfrak{Dau}(X)\leq \mathfrak{oct}(X)$ and $\mathfrak{pDau}(X)\leq \mathfrak{rig}\text{-}\mathfrak{oct}(X)\leq \mathfrak{oct}(X)$ hold for any Banach space $X$. So we show (under suitable assumptions that will be indicated) that  $$\mathfrak{oct}(C(K))\leq \mathfrak{r}(K) \leq \mathfrak{pDau}(C(K)).$$ We prove the first inequality $\mathfrak{oct}(C(K))\leq \mathfrak{r}(K)$ under the assumption that $\mathfrak{r}(K)\geq \omega$. For this, we take an infinite $\kappa<\mathfrak{oct}(C(K))$  and we prove that $\kappa<\mathfrak{r}(K)$. The latter inequality is the same as saying that for any collection of non-empty open sets $\mathcal{F}=\{O_\alpha\colon \alpha<\kappa\}$ there are two disjoint closed sets  $L_1$ and $L_2$ with $O_\alpha\cap L_i\neq \emptyset$ for every $\alpha<\kappa$ and $i\in\{1,2\}$. By Urysohn's Lemma, for every $\alpha<\kappa$ we can find a continuous function $f_\alpha\colon K\longrightarrow [0,1]$ that attains its maximum 1 inside $O_\alpha$ and vanishes on $K\setminus O_\alpha$. Since $\kappa<\mathfrak{oct}(C(K))$, we know that $C(K)$ is $\kappa$-octahedral, so we can find $g\in S_{C(K)}$ such that
$$\Vert f_\alpha\pm g\Vert>2-\frac{1}{8}\ \text{ for all }\alpha<\kappa.$$
We define
$$L_1:=\left\{t\in K\colon  g(t)\geq 1-\frac{1}{8}\right\}, \hspace{1cm} L_2:=\left\{t\in K\colon g(t)\leq -1+\frac{1}{8}\right\}.$$
These are two disjoint closed sets. It remains to check that $L_i\cap O_\alpha\neq \emptyset$ for every $\alpha<\kappa$ and $i\in \{1,2\}$. We consider the case $i=1$. So fix $\alpha<\kappa$. Since $\Vert f_\alpha+g\Vert>2-\frac{1}{8}$ we can find $t_\alpha\in K$ such that
$$\vert f_\alpha(t_\alpha)+g(t_\alpha)\vert>2-\frac{1}{8}.$$
But $\vert f_\alpha(t_\alpha)+g(t_\alpha)\vert= f_\alpha(t_\alpha)+g(t_\alpha)$ because
$-f_\alpha(t_\alpha)-g(t_\alpha)\leq 0-g(t_\alpha)\leq 1$
since $f_\alpha\geq 0$. So
$$f_\alpha(t_\alpha)+g(t_\alpha)>2-\frac{1}{8},$$ which implies that $$f_\alpha(t_\alpha)>1-\frac{1}{8}\mbox{ and }g(t_\alpha)>1-\frac{1}{8}$$
using that both $f_\alpha$ and $g$ are norm-one functions. Now, on the one hand, since $g(t_\alpha)>1-\frac{1}{8}$ implies $t_\alpha\in L_1$ from the very definition of $L_1$. On the other hand, the fact that $f_\alpha(t_\alpha)>1-\frac{1}{8}>0$ implies by the choice of $f_\alpha$ that $t_\alpha\in O_\alpha$. This proves $O_\alpha\cap L_1\neq \emptyset$, as desired. The proof that $L_2\cap O_\alpha\neq\emptyset$ is similar, using that $\Vert f_\alpha-g\Vert>2-\frac{1}{8}$.

Now we prove that $\mathfrak{r}(K)\leq \mathfrak{pDau}(C(K))$ under the assumption that $\mathfrak{pDau}(C(K))\geq\omega$. For this, it is enough to take an infinite $\kappa<\mathfrak{r}(K)$ and then prove that $\kappa<\mathfrak{pDau}(C(K))$, which means that $C(K)$ has the $\kappa$-perfect Daugavet property. Following Corollary~\ref{cor:weaknetdauga}, we fix a family of functions $\{f_\alpha\colon \alpha<\kappa\}\subseteq S_{C(K)}$ and some $g\in B_{C(K)}$, and we will find a sequence $\{h_n\}_{n\in\mathbb{N}}$ in $B_{C(K)}$ that weakly converges to $g$ such that $\Vert f_\alpha+h_n\Vert=2$ holds for every $\alpha<\kappa$ and every $n\in\mathbb N$.

For every $\alpha$ we pick $\varepsilon_\alpha\in\{-,+\}$ such that $\{t\in K\colon   \varepsilon_\alpha f_\alpha(t)=1\}$ is non-empty for every $\alpha$. Consequently, given $\alpha<\kappa$ and $k\in\mathbb N$, the open set
$$W_\alpha^k:=\left\{t\in K\colon \varepsilon_\alpha f_\alpha(t)>1-\frac{1}{k}\right\}$$
is non-empty for every $k\in\mathbb N$. 

We make an inductive procedure that will create two descending chains of nonempty open sets $K=V'_0 \supseteq V'_1 \supseteq V'_2 \supseteq \cdots$ and $K=V_0 \supseteq V_1 \supseteq V_2 \supseteq \cdots$ with $\overline{V}_n\subseteq V'_n$, in such a way that at step $n$ we are given $V'_{n-1}$ and $V_{n-1}$ such that $\{V_{n-1}\cap W_\alpha^k\}$ is a family of nonempty open subsets of $K$ of cardinality $\leq \kappa$.

By Lemma~\ref{rnn}, since $\kappa<\mathfrak{r}(K)$, we can find three pairwise disjoint closed sets $L^{-}_n$, $L^0_n$ and $L^+_n$ that have nonempty intersection with all $V_{n-1}\cap W_\alpha^k$, for $\alpha<\kappa$, $k\in\mathbb{N}$. We can suppose that $L_n^-\cup L^0_n\cup L_n^+ \subseteq \overline{V}_{n-1}$. We can find three continuous functions $\psi_n^{-},\psi_n^0,\psi_n^+\in C(K)$ with pairwise disjoint supports and such that $\psi^i_n|_{L_n}$ is constant equal to 1 for $i\in\{-,0,+\}$. Moreover, we can assume that $\psi^0_n$ is constant equal to 1 on an open set $V'_n$ that contains $L^0_n\cup K\setminus V'_{n-1}$ and is disjoint from $L_n^+\cup L_n^-$. Define $h_n = \psi^0_n g + \psi^+_n - \psi^-_n$. We have the following properties:

\begin{enumerate}
    \item $\|h_n\| \leq 1$ because $h_n$ is the sum of three functions of norm at most 1 with disjoint supports.
    \item $h_n-g = (\psi^0_n-1) g + \psi^+_n - \psi^-_n$ vanishes on $V'_n$ and on $K\setminus V'_{n-1}$.
    \item $\|f_\alpha + h_n\|=2$ for all $\alpha<\kappa$. To see this, pick $t\in L_n^{\varepsilon_\alpha}\cap W_\alpha^k$. Since $t\in L_n^{\varepsilon_\alpha}$ we have that $h_n(t) = \varepsilon_\alpha \psi_n^{\varepsilon_\alpha}(t) = 1$, and $$\varepsilon_\alpha (f_\alpha(t) + h_n(t)) = \varepsilon_\alpha f_\alpha(t) + \varepsilon_\alpha h_n(t) > \left(1 - \frac{1}{k}\right) + 1.$$
    This proves that $\|f_\alpha + h_n\|\geq 2-1/k$ for all $k$.
\end{enumerate}
We now take  an open set $V_n$ such that $L^0_n\subseteq \overline{V}_n\subseteq V'_n$. This completes the inductive procedure.

The functions $h_n-g$ constitute a bounded sequence of pairwise disjoint functions. They are pairwise disjoint because $h_n-g$ vanishes on $V'_n$ and on  $K\setminus V'_{n-1}$, so it is nonzero only on $V'_{n-1}\setminus V'_n$. Therefore $h_n-g$ weakly converges to 0, so $h_n$ weakly converges to $g$, so $h_n\in S$ for big enough $n$. This finishes the proof because we knew that $\|f_\alpha+h_n\|=2$ for all $n$.

At this point, the theorem is proved if the assumptions $\mathfrak{pDau}(C(K))\geq\omega$ and $\mathfrak{r}(K)\geq\omega$ hold.

What happens if $\mathfrak{r}(K)<\omega$? Then we have a finite family $\mathcal{F}$ in Definition~\ref{def:reapingnumber}. The family $\mathcal{F}_0$ of minimal nonempty intersections of sets in $\mathcal{F}$ still satisfies Definition~\ref{def:reapingnumber}, and it is a pairwise disjoint family. Some $A\in \mathcal{F}_0$ must be a singleton because, otherwise, we could take disjoint closed finite sets $L_1$ and $L_2$ that intersect all $A\in\mathcal{F}_0$. We conclude that $K$ has an isolated point $x$. Then $\mathcal{F}=\{x\}$ also serves as family, so $\mathfrak{r}(K)=1$. By Theorem~\ref{theo:CKDaugavet}, $C(K)$ fails the Daugavet property, and moreover $\mathfrak{oct}(C(K))=1$. We get
    $$1 = \mathfrak{pDau}(C(K))=\mathfrak{Dau}(C(K)) = \mathfrak{oct}(C(K))= \mathfrak{rig}\text{-}\mathfrak{oct}(C(K)) =\mathfrak{r}(
    K).$$

It is time to deal with the case when $\mathfrak{pDau}(C(K))<\omega$ and $\mathfrak{r}(K)\geq \omega$. In this case, by Theorem~\ref{theo:CKDaugavet}, $C(K)$ fails the perfect Daugavet property, $\mathfrak{pDau}(C(K))=1$, and $K$ has a $G_\delta$-point, say that $\{x\}=f^{-1}(\{0\})$ for some continuous $f\colon K\longrightarrow [0,1]$. Then we have that $\mathcal{F} = \bigl\{f^{-1}\bigl([0,1/n)\bigr) \colon n<\omega\bigr\}$ is as in Definition~\ref{def:reapingnumber}, because if $x\not\in L_1$, then $\min\{f(t) \colon t\in L_1\}>0$ and therefore $L_1\cap f^{-1}\bigl([0,1/n)\bigr)=\emptyset$ for big enough $n$. We conclude that $\mathfrak{r}(K)\leq\omega$, indeed $\mathfrak{r}(K)=\omega$ by our assumption in this last case. We observed before that if $K$ has an isolated point then $\mathfrak{r}(K) = 1$. So $K$ has no isolated points and, therefore, $C(K)$ has the Daugavet property and it is in turn octahedral. Taking into account that $\mathfrak{rig}\text{-}\mathfrak{oct}(C(K))=1$ in virtue of Remark~\ref{remark:C(K)Gdeltarigidoh}, it follows that
$$\mathfrak{pDau}(C(K))= \mathfrak{rig}\text{-}\mathfrak{oct}(C(K)) = 1 < \mathfrak{r}(
    K) = \omega \leq \mathfrak{Dau}(C(K)) \leq \mathfrak{oct}(C(K)).$$
But we proved at the beginning that $\mathfrak{oct}(C(K))\leq \mathfrak{r}(K)$ whenever $\mathfrak{oct}(C(K))\geq\omega$. So $$\mathfrak{pDau}(C(K))= \mathfrak{rig}\text{-}\mathfrak{oct}(C(K)) = 1 <\omega = \mathfrak{Dau}(C(K)) = \mathfrak{oct}(C(K))=\mathfrak{r}(
    K).$$
    
We finish the proof with the following observation. By Theorem~\ref{theo:CKDaugavet}, the fact that $K$ has a $G_\delta$-point and not an isolated point is equivalent to $\mathfrak{pDau}(C(K)) = 1 <\omega \leq \mathfrak{Dau}(C(K))$. Since such inequalities only happened in the last case that we considered, this last case must be equivalent to $K$ having $G_\delta$-points and no isolated points.
\end{proof}

\section{The reaping number of a compact space}\label{section:reapingnu}

In this section we dig deeper into the investigation of the reaping number $\mathfrak{r}(K)$, with implications in the behavior of the transfinite Daugavet properties in $C(K)$ spaces.

The first thing to say is that the reaping number of a compact space generalises the known notion of \emph{reaping number of a Boolean algebra} $\mathbb{B}$, defined as the least cardinality $\mathfrak{r}(\mathbb{B})$ of a subset $X\subseteq \mathbb{B}\setminus\{0\}$ such every element $a\in B$ either contains an element of $X$ or is disjoint from an element of $X$. Such set $X$ is called a \emph{reaping set} of $\mathbb{B}$, cf. \cite{bs1,dsw,monk}.

\begin{proposition}\label{prop:0dim}
Let $K$ be a totally disconnected compact topological space and let $\mathbb{B}$ be the Boolean algebra of the clopen subsets of $K$. Then $\mathfrak{r}(K)=\mathfrak{r}(\mathbb{B})$.
\end{proposition}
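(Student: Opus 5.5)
We prove the two inequalities $\mathfrak{r}(K)\leq\mathfrak{r}(\mathbb{B})$ and $\mathfrak{r}(\mathbb{B})\leq\mathfrak{r}(K)$ separately. The only ingredients are two standard facts about a totally disconnected compact space $K$. First, the clopen sets form a base of the topology. Second, any two disjoint closed sets $L_1,L_2$ can be separated by a clopen set, i.e.\ there is a clopen $C$ with $L_1\subseteq C$ and $L_2\cap C=\emptyset$. The second fact follows from the first by compactness: cover $L_1$ by finitely many clopens missing $L_2$, which exist by zero-dimensionality of compact Hausdorff totally disconnected spaces.

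For $\mathfrak{r}(K)\leq\mathfrak{r}(\mathbb{B})$, let $X\subseteq\mathbb{B}\setminus\{0\}$ be a reaping set of size $\mathfrak{r}(\mathbb{B})$. Its members are nonempty open subsets of $K$, so we take $\mathcal{F}=X$. Given disjoint closed $L_1,L_2$, separate them by a clopen $C\supseteq L_1$ with $C\cap L_2=\emptyset$. Since $X$ is reaping, either some $W\in X$ satisfies $W\subseteq C$, in which case $W\cap L_2=\emptyset$, or some $W\in X$ is disjoint from $C$, in which case $W\cap L_1=\emptyset$. Hence $\mathcal{F}$ witnesses Definition~\ref{def:reapingnumber}.

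For $\mathfrak{r}(\mathbb{B})\leq\mathfrak{r}(K)$, take a family $\mathcal{F}$ of nonempty open sets of size $\mathfrak{r}(K)$ as in Definition~\ref{def:reapingnumber}. For each $W\in\mathcal{F}$, choose a nonempty clopen $C_W\subseteq W$, which exists because clopens form a base, and let $X=\{C_W\colon W\in\mathcal{F}\}$. Given $a\in\mathbb{B}$, apply the property of $\mathcal{F}$ to the disjoint closed sets $L_1=a$ and $L_2=K\setminus a$. If $a$ is $\emptyset$ or $K$, the reaping condition is trivial; otherwise both sets are nonempty, though this is not even needed. We obtain $W\in\mathcal{F}$ with $W\cap a=\emptyset$, so $C_W$ is disjoint from $a$, or with $W\cap(K\setminus a)=\emptyset$, so $C_W\subseteq W\subseteq a$. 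Thus $X$ is a reaping set with $|X|\leq|\mathcal{F}|$.

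There is no serious obstacle. The only point needing care is the separation of disjoint closed sets by a clopen set in the first inequality. This is the step where total disconnectedness combined with compactness is genuinely used: the compact space must be zero-dimensional in the sense that clopens form a base, which holds for compact Hausdorff spaces in which points are separated by clopens.
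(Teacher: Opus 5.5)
Your proof is correct and follows the paper's argument. For $\mathfrak{r}(K)\leq\mathfrak{r}(\mathbb{B})$ you reuse a clopen reaping set as the family, via clopen separation of disjoint closed sets. For the converse you shrink each member of a reaping family of open sets to a nonempty clopen subset, and you spell out the verification with $L_1=a$, $L_2=K\setminus a$ that the paper leaves implicit.
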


\begin{proof}
Let $X$ be a family of clopens that is reaping. Then $\mathcal{F}=X$ is also a family of nonempty open subsets of $K$ that satisfies the requirements of Definition~\ref{def:reapingnumber}, because in a totally disconnected compact space, for every two disjoint closed sets $L_1,L_2$ there is a clopen set $A$ such that $L_1\subset A$ and $L_2\cap A = \emptyset$. This proves $\mathfrak{r}(K)\leq\mathfrak{r}(\mathbb{B})$. For the converse inequality, consider $\mathcal{F}$ a family of nonempty open subsets of $K$ as in Definition~\ref{def:reapingnumber}. For each open set $W\in \mathcal{F}$ take a nonempty clopen set $A_W\subseteq W$. The family $\{A_W\}_{W\in\mathcal{F}}$ is reaping.
\end{proof}

The proof suggests that we can extend the term \emph{reaping family} for families $\mathcal{F}$ as in Definition~\ref{def:reapingnumber}. It is convenient to notice some equivalent formulations of this property.

\begin{proposition}\label{reapingfamily}
    For a family $\mathcal{F}$ of nonempty open subsets of $K$, the following are equivalent
    \begin{enumerate}
        \item for every disjoint closed sets $L_1, L_2$ there exists $W\in\mathcal{F}$  such that $W\cap L_1 = \emptyset$ or $W\cap L_2=\emptyset$.
        \item for every open sets $V_1, V_2$ with  $K=V_1\cup V_2$ there exists $W\in \mathcal{F}$ such that $W\subseteq V_1$ or $W\subseteq V_2$.
        \item no continuous function $f\in C(K)$ reaches both values 0 and 1 on all members of $\mathcal{F}$.
    \end{enumerate}
A family $\mathcal{F}$ satisfying these conditions will be called a \emph{reaping family}. The reaping number $\mathfrak{r}(K)$ is the least cardinality of reaping family in $K$.
\end{proposition}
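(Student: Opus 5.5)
The plan is to prove (1)$\Leftrightarrow$(2) by complementation and (1)$\Leftrightarrow$(3) by Urysohn's lemma together with level sets. The final sentence of the statement is then just a restatement of Definition~\ref{def:reapingnumber} in terms of condition (1).

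For (1)$\Leftrightarrow$(2), I would pass to complements. Given open sets $V_1,V_2$ with $K=V_1\cup V_2$, the sets $L_1:=K\setminus V_2$ and $L_2:=K\setminus V_1$ are closed and disjoint. Moreover, $W\cap L_1=\emptyset$ means exactly $W\subseteq V_2$, and $W\cap L_2=\emptyset$ means $W\subseteq V_1$. Conversely, disjoint closed sets $L_1,L_2$ produce the open cover $V_1=K\setminus L_2$, $V_2=K\setminus L_1$. So the two conditions are literally the same statement, up to swapping indices.

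For (1)$\Rightarrow$(3), I argue by contrapositive. Suppose $f\in C(K)$ attains both values $0$ and $1$ on every $W\in\mathcal{F}$. Put $L_1=f^{-1}(\{0\})$ and $L_2=f^{-1}(\{1\})$. These are disjoint closed sets, and every $W\in\mathcal{F}$ meets both of them, so (1) fails.

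For (3)$\Rightarrow$(1), again by contrapositive, suppose there are disjoint closed sets $L_1,L_2$ such that every $W\in\mathcal{F}$ meets both. Since $K$ is compact Hausdorff, hence normal, Urysohn's lemma gives a continuous $f\colon K\to[0,1]$ with $f|_{L_1}=0$ and $f|_{L_2}=1$. Each $W\in\mathcal{F}$ then contains a point of $L_1$, where $f=0$, and a point of $L_2$, where $f=1$, so (3) fails.

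None of these steps is really an obstacle; the argument is routine. The only point to be careful about is that the implications involving (3) use normality of $K$, which holds because $K$ is compact Hausdorff.
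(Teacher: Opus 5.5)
Your proposal is correct and follows the same route as the paper: (1)$\Leftrightarrow$(2) by passing to complements, and (1)$\Leftrightarrow$(3) by taking the level sets $f^{-1}(\{0\})$, $f^{-1}(\{1\})$ in one direction and applying Urysohn's lemma in the other. You simply spell out the details that the paper compresses into one sentence.
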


\begin{proof}
    The equivalence of (1) and (2) is obvious considering $V_i$ as the complement of $L_i$, while (3) is equivalent to (1) by Urysohn's lemma. The last statement is just recalling that (1) is as in Definition~\ref{def:reapingnumber}.
\end{proof}

\subsection{Some notable computations of the reaping number} With Proposition~\ref{prop:0dim} and Theorem~\ref{theo:CK} at hand, we can have a look at the literature, check for what algebras $\mathbb{B}$ the reaping number $\mathfrak{r}(\mathbb{B})$ has been computed, and we will know $\mathfrak{Dau}(C(K))$ for the compact spaces $K$ with those algebras of clopens.

Let us start with the case of $\ell_\infty/c_0$. The space $\ell_\infty/c_0$ is isometric to the space of continuous functions $C(K)$ for $K=\beta\omega\setminus\omega$, which is a totally disconnected compact space whose algebra of clopens is isomorphic to $\mathcal{P}(\omega)/\mathit{fin}$, the algebra of subsets of the natural numbers modulo finite sets. The reaping number $\mathfrak{r}(\mathcal{P}(\omega)/\mathit{fin})$  is called just \emph{the reaping number} $\mathfrak{r}$. It can be described as the least cardinality of a family of infinite subsets of the natural numbers such that every other infinite set either contains a member of the family or is disjoint from a member of the family. It is a well known cardinal between $\omega_1$ and $\mathfrak{c}$, one can see \cite{blass} for a review of its behavior and relation to other cardinal characteristics of the continuum in different models of set theory. Now, with the help of Theorem~\ref{theo:CK}, we get the following result.

\begin{proposition}
    $\mathfrak{Dau}(\ell_\infty/c_0) = \mathfrak{r}$.
\end{proposition}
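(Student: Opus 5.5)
The plan is a direct chaining of results already established. First, fix the identification $\ell_\infty/c_0 \cong C(K)$ isometrically, where $K=\beta\omega\setminus\omega$ is the Stone space of $\mathcal{P}(\omega)/\mathit{fin}$. Since $\mathfrak{Dau}$ is an isometric invariant, it suffices to compute $\mathfrak{Dau}(C(\beta\omega\setminus\omega))$.

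Next, apply Theorem~\ref{theo:CK}, which gives $\mathfrak{Dau}(C(K))=\mathfrak{r}(K)$ in both of its cases. In the generic case all indices equal $\mathfrak{r}(K)$. In the exceptional case (a $G_\delta$-point but no isolated points) one still has $\mathfrak{Dau}(C(K))=\mathfrak{r}(K)=\omega$. So no case analysis on $K$ is needed for $\mathfrak{Dau}$.

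Then use Proposition~\ref{prop:0dim}. Since $K$ is totally disconnected, $\mathfrak{r}(K)=\mathfrak{r}(\mathbb{B})$, where $\mathbb{B}$ is the clopen algebra of $K$. By Stone duality, $\mathbb{B}\cong\mathcal{P}(\omega)/\mathit{fin}$. By definition, $\mathfrak{r}(\mathcal{P}(\omega)/\mathit{fin})=\mathfrak{r}$.

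The only point to check is that the Boolean-algebra reaping notion used in Proposition~\ref{prop:0dim} matches the classical definition of $\mathfrak{r}$ under this isomorphism. For $[A]\in\mathcal{P}(\omega)/\mathit{fin}$ with $A$ infinite:
\begin{itemize}
\item $[B]\le [A]$ means $B\subseteq^* A$;
\item disjointness means $B\cap A$ is finite.
\end{itemize}
So a reaping set in the algebra is a family of infinite sets such that every infinite $A$ almost-contains or is almost-disjoint from a member. Finite modifications of a member do not change these relations. Hence this is the usual definition of $\mathfrak{r}$ from \cite{blass}, up to replacing $\subseteq$ by $\subseteq^*$, and the two versions are standardly equivalent. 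This bookkeeping is the only mild obstacle; everything else is a direct citation.

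As a side remark, since $\mathfrak{r}\ge\omega_1$, we land in the non-exceptional case of Theorem~\ref{theo:CK}. Indeed, $\beta\omega\setminus\omega$ has no $G_\delta$-points, so we also get $\mathfrak{pDau}(\ell_\infty/c_0)=\mathfrak{r}$.
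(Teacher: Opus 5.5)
Your proposal is correct and follows the paper's argument: identify $\ell_\infty/c_0$ with $C(\beta\omega\setminus\omega)$, apply Theorem~\ref{theo:CK} to get $\mathfrak{Dau}(C(K))=\mathfrak{r}(K)$, then use Proposition~\ref{prop:0dim} together with Stone duality to get $\mathfrak{r}(K)=\mathfrak{r}(\mathcal{P}(\omega)/\mathit{fin})=\mathfrak{r}$. Your extra check that $\subseteq$ and $\subseteq^*$ give the same reaping number is sound, and so is your side remark that $\mathfrak{r}\geq\omega_1$ rules out the exceptional case and yields $\mathfrak{pDau}(\ell_\infty/c_0)=\mathfrak{r}$.
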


Other spaces for which we can use Theorem~\ref{theo:CK} to obtain the ``Daugavetian'' cardinal invariant are $L_\infty(\{0,1\}^\kappa)$. Let us give present some relevant cardinals. Here, $\mathcal{N}$ is the family of subsets of $\mathbb{R}$ of measure zero, $[\kappa]^\omega$ is the family of all countable subsets of $\kappa$, and if $\mathcal{F}$ is a family of sets, $\operatorname{cof}\mathcal{F}$ is the least cardinality of a subfamily $\mathcal{G}\subseteq\mathcal{F}$ such that every set in $\mathcal{F}$ is contained in some set in $\mathcal{G}$.

\begin{proposition}
    $\mathfrak{Dau}(L_\infty(\{0,1\}^\kappa)) = \max(\operatorname{cof}\mathcal{N},\operatorname{cof}[\kappa]^\omega)$ for every infinite cardinal $\kappa$.     
    In particular, $\mathfrak{Dau}(L_\infty[0,1]) = \operatorname{cof}\mathcal{N}$.
\end{proposition}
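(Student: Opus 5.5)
The plan is to reduce the statement to a known computation of the reaping number of a measure algebra and then apply Theorem~\ref{theo:CK}.

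\emph{Identifying the compact space.} The space $L_\infty(\{0,1\}^\kappa)$, with the product Haar measure $\lambda_\kappa$, is an abelian von Neumann algebra. It is isometrically isomorphic to $C(K)$, where $K$ is the Stone space of the measure algebra $\mathbb{M}_\kappa$ of $\lambda_\kappa$ (Borel sets modulo null sets). This $K$ is totally disconnected, even extremally disconnected, and its clopen algebra is $\mathbb{M}_\kappa$.

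\emph{Reducing to $\mathfrak{r}(\mathbb{M}_\kappa)$.} Since $\mathbb{M}_\kappa$ is atomless, $K$ has no isolated points. In fact $K$ has no $G_\delta$-points either, although the argument does not need this. The reason is that in a Stone space of a $\sigma$-complete atomless algebra a nonempty $G_\delta$ set has nonempty interior: a countable intersection of clopens containing a point contains the clopen given by their countable infimum in $\mathbb{M}_\kappa$, and that infimum is nonzero. If it were zero, the point would have to lie in the union of the complements, and those complements have supremum $1$. Either way, Theorem~\ref{theo:CK} applies in its main case and gives $\mathfrak{Dau}(C(K))=\mathfrak{r}(K)$. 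Proposition~\ref{prop:0dim} then gives $\mathfrak{r}(K)=\mathfrak{r}(\mathbb{M}_\kappa)$.

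\emph{The main obstacle: computing $\mathfrak{r}(\mathbb{M}_\kappa)$.} We need $\mathfrak{r}(\mathbb{M}_\kappa)=\max(\operatorname{cof}\mathcal{N},\operatorname{cof}[\kappa]^\omega)$. My first move is to cite the literature, namely the computations of reaping numbers of measure algebras (e.g.\ \cite{bs1,dsw}), where this equality is known; the equality $\mathfrak{r}(\mathbb{M}_\omega)=\operatorname{cof}\mathcal{N}$ is the classical case. If a self-contained argument is wanted, I would prove the two bounds separately.

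For the lower bound, fix a reaping family of measure-algebra elements.
\begin{enumerate}
\item Each element depends on countably many coordinates. If the family had size below $\operatorname{cof}[\kappa]^\omega$, their countable supports would not be cofinal in $[\kappa]^\omega$. So some countable set of coordinates is not covered by any single support.
\item From this one wants an element $a$ splitting every member of the family. That would contradict reaping, which requires each element either to contain or to be disjoint from some member.
\item One also needs the fact, known from the literature, that $\mathfrak{r}(\mathbb{M}_\omega)\ge\operatorname{cof}\mathcal{N}$.
\end{enumerate}
The honest plan for both parts of this step is to rely on the cited results rather than reprove them.

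For the upper bound, I would combine two ingredients. First, a cofinal family in $[\kappa]^\omega$ reduces the problem to countably many coordinates. Second, on each countable set of coordinates one uses a reaping family of size $\operatorname{cof}\mathcal{N}$ for $\mathbb{M}_\omega$, built from a cofinal family of null sets. The union of these families has the required size, and one checks that it is reaping.

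The particular case $\kappa=\omega$ follows immediately, because $\operatorname{cof}[\omega]^\omega=1$ and $L_\infty[0,1]\cong L_\infty(\{0,1\}^\omega)$ via the measure algebra isomorphism.
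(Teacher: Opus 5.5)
Your reduction is the paper's reduction. You identify $L_\infty(\{0,1\}^\kappa)\equiv C(K)$ with $K$ the Stone space of the measure algebra $\mathbb{M}_\kappa$. Theorem~\ref{theo:CK} and Proposition~\ref{prop:0dim} then give $\mathfrak{Dau}(L_\infty(\{0,1\}^\kappa))=\mathfrak{r}(\mathbb{M}_\kappa)$, and Maharam's theorem handles $[0,1]$. The gap is in the only substantial step, the equality $\mathfrak{r}(\mathbb{M}_\kappa)=\max(\operatorname{cof}\mathcal{N},\operatorname{cof}[\kappa]^\omega)$: you neither prove it nor attribute it to a source that supplies it. The papers \cite{bs1,dsw} relate the reaping number to the minimal $\pi$-character. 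For a complete algebra such as $\mathbb{M}_\kappa$ they give $\mathfrak{r}=\min\pi_\chi$, but that still leaves the value to be computed. The paper obtains it from two results. Burke's theorem \cite[Theorem 1]{burke} says the weak density of $\mathbb{M}_\kappa$ (another name for its reaping number) is the maximum of $\operatorname{cof}[\kappa]^\omega$ and the density of the Lebesgue measure algebra. \cite{ckp} shows that this density equals $\operatorname{cof}\mathcal{N}$. With these two citations in place of yours, your argument coincides with the paper's.

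Your self-contained fallback does not close the gap. The upper bound is fine. Every $b$ is supported in some member $A$ of a cofinal family in $[\kappa]^\omega$, and if $\mathcal{D}_A$ is dense in $\mathbb{M}_A$ with $|\mathcal{D}_A|\le\operatorname{cof}\mathcal{N}$ (by \cite{ckp}), then $\bigcup_A\mathcal{D}_A$ is reaping. Both lower bounds, however, are the content of Burke's theorem, and your step 2 is not routine. The obvious attempt takes an element $b$ supported on the uncovered countable set $C$, and this cannot split every nonzero element whose support omits a point of $C$. Indeed, splitting all of $\mathbb{M}_{C\setminus\{\xi\}}$ forces $\sigma_\xi(b)=-b$ for the flip $\sigma_\xi$ of coordinate $\xi$, for every $\xi\in C$. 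On the other hand, approximating $b$ by cylinder sets gives $\mu(\sigma_\xi(b)\triangle b)\to 0$ as $\xi$ runs through $C$, a contradiction.

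Finally, your argument that $K$ has no $G_\delta$-points is incorrect. If $\bigwedge_n a_n=0$, the union of the clopens $K\setminus a_n$ is merely dense and need not contain the point. In fact, clopens decreasing with measures tending to $0$ have nonempty intersection with empty interior. The conclusion itself holds, since $K$ is extremally disconnected without isolated points (compare the proof of Proposition~\ref{dualremark}). You do not need it, though, because Theorem~\ref{theo:CK} gives $\mathfrak{Dau}(C(K))=\mathfrak{r}(K)$ in both of its cases.
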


\begin{proof}
The space $L_\infty(\{0,1\}^\kappa)$ is isometric to a space $C(K)$ where $K$ is the Stone space of the measure algebra of $\{0,1\}^\kappa$. The reaping number (under the name \emph{weak density}) of this measure algebra was computed in \cite[Theorem 1]{burke} as the maximum between $\operatorname{cof}[\kappa]^\omega$ and the least cardinality of a family of subsets of $\mathbb{R}$ of positive measure such that every set of positive measure almost contains one from the family. This latter cardinal was later shown in \cite{ckp} to coincide with the more familiar $\operatorname{cof}\mathcal{N}$, thus giving the formula in the statement. We have that $\omega_1\leq \operatorname{cof}\mathcal{N} \leq \mathfrak{c}$ and $\kappa\leq \operatorname{cof}[\kappa]^\omega \leq \kappa^\omega$. Again, one can see \cite{blass} for information about $\operatorname{cof}\mathcal{N}$.  For the last statement in the Proposition recall that, by Maharam's theorem, $L_\infty[0,1]$ is isometric to $L_\infty(\{0,1\}^\omega)$.
\end{proof}

By Maharam's theorem, spaces $L_\infty(\mu)$ for localizable atomless measures $\mu$ can be expressed as $\ell_\infty$-direct sums of spaces $L_\infty(\{0,1\}^\kappa)$. We will discuss what happens with these sums later, but we can easily determine now that they behave well for $C(K)$ spaces. Recall that the \v{C}ech-Stone compactification of a completely regular topological space $Z$ is a compact space $\beta Z$ that contains $Z$ as a subspace, and such that every continuous function $f\colon Z\longrightarrow [a,b]$ has a unique continuous extension $\hat{f}\colon \beta Z\longrightarrow [a,b]$. 

\begin{proposition}\label{prop:linftysum-ckspaces}
    Let $\{K_i\}_{i\in I}$ be a family of compact spaces and $\beta\bigcup_{i\in I} K_i$ the \v{C}ech-Stone compactification of their discrete union. Then $\mathfrak{r}\left(\beta\bigcup_{i\in I}K_i\right) = \min\{\mathfrak{r}(K_i)\colon i\in I\}$ and, therefore, 
    $$
    \mathfrak{Dau}\left(\left(\bigoplus_{i\in I}C(K_i)\right)_{\ell_\infty}\right) = \min_{i\in I}\mathfrak{Dau}(C(K_i)).
    $$
\end{proposition}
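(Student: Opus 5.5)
Write $K=\beta\bigcup_{i\in I}K_i$ and view each $K_i$ as a clopen subset of $K$; the union $\bigcup_i K_i$ is open and dense in $K$. The second statement follows from the first. Indeed, $\bigl(\bigoplus_{i\in I}C(K_i)\bigr)_{\ell_\infty}$ is isometric to $C(K)$: a bounded family $(f_i)$ is a bounded continuous function on the discrete union, and it extends uniquely to $K$. Theorem~\ref{theo:CK} then gives $\mathfrak{Dau}(C(K))=\mathfrak{r}(K)$ and $\mathfrak{Dau}(C(K_i))=\mathfrak{r}(K_i)$. So the plan is to prove the two inequalities for the reaping numbers, working with the characterisation of reaping families given in Proposition~\ref{reapingfamily}(3).

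For $\mathfrak{r}(K)\leq\min_i\mathfrak{r}(K_i)$, fix $i$ and a reaping family $\mathcal{F}$ of $K_i$. Since $K_i$ is clopen in $K$, its members are nonempty open subsets of $K$. Suppose $f\in C(K)$ reached both values $0$ and $1$ on every member of $\mathcal{F}$. Then $f|_{K_i}$ would do the same inside $K_i$, which is a contradiction. Hence $\mathcal{F}$ is reaping in $K$.

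For $\mathfrak{r}(K)\geq\min_i\mathfrak{r}(K_i)$, let $\mathcal{F}$ be a family of nonempty open subsets of $K$ with $|\mathcal{F}|<\min_i\mathfrak{r}(K_i)$; we show it is not reaping. We must build one $f\in C(K)$, which amounts to a bounded family $(f_i)_{i\in I}$ with $f_i\in C(K_i)$, such that $f$ reaches both $0$ and $1$ on each $W\in\mathcal{F}$.
\begin{enumerate}
\item For each $W\in\mathcal{F}$, density of $\bigcup_i K_i$ lets us choose an index $i(W)$ with $W\cap K_{i(W)}\neq\emptyset$.
\item For a fixed $i$, the family $\mathcal{F}_i=\{W\cap K_i\colon i(W)=i\}$ consists of nonempty open subsets of $K_i$ and has cardinality less than $\mathfrak{r}(K_i)$. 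So it is not reaping, and there is $f_i\in C(K_i)$ reaching $0$ and $1$ on each of its members. If $\mathcal{F}_i=\emptyset$, take $f_i=0$.
\item Clip each $f_i$ into $[0,1]$, which preserves reaching the values $0$ and $1$. The family $(f_i)$ is then uniformly bounded and extends to $f\in C(K)$. Each $W\in\mathcal{F}$ contains $W\cap K_{i(W)}$, on which $f=f_{i(W)}$ reaches both $0$ and $1$. Hence $\mathcal{F}$ is not reaping.
\end{enumerate}

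The point needing care is step (2): the bound $|\mathcal{F}_i|\leq|\mathcal{F}|<\mathfrak{r}(K_i)$ must hold for every $i$ simultaneously, which is why the hypothesis uses the minimum over $i$. The other ingredient is step (3): the truncation to $[0,1]$ is what makes the family uniformly bounded, so that the extension to $\beta$ of the discrete union is available. When $\min_i\mathfrak{r}(K_i)$ is finite, some $K_i$ has an isolated point. This case is covered by the same argument, or directly by the first inequality, since then $\mathfrak{r}(K)=1$.
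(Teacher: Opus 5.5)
Your proof is correct and follows essentially the same route as the paper. You use the isometry of $C\bigl(\beta\bigcup_{i\in I}K_i\bigr)$ with $\bigl(\bigoplus_{i\in I}C(K_i)\bigr)_{\ell_\infty}$ together with Theorem~\ref{theo:CK}; for $[\leq]$ you note that a reaping family of $K_i$ stays reaping in the compactification; and for $[\geq]$ you glue functions $f_i\colon K_i\to[0,1]$ witnessing non-reaping of the trace families via the \v{C}ech--Stone extension property. The only cosmetic difference is that you pick one index $i(W)$ per member of $\mathcal{F}$ and argue by cardinality directly, whereas the paper takes all nonempty traces $A\cap K_i$ and shows that some trace family must be reaping in some $K_i$.
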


\begin{proof}
From the definition of \v{C}ech-Stone compactification, it follows that $C(\beta\bigcup_{i\in I}K_i)$ is isometric to $(\bigoplus_{i\in I}C(K_i))_{\ell_\infty}$, and so the last formula of the statement follows from the first one and Theorem~\ref{theo:CK}. The inequality $[\leq]$ holds for any compactification of the discrete union. If $\mathcal{F}$ is a reaping family in $K_i$ for some $i$, then the same family is reaping in $\beta\bigcup_{i\in I}K_i$. For the inequality $[\geq]$ we prove that if $\mathcal{F}$ is reaping in $\beta\bigcup_{i\in I}K_i$ then there exists $i$ such that $\{A\cap K_i \colon A\in \mathcal{F}, A\cap K_i\neq\emptyset\}$ is reaping in $K_i$. If it is not the case, then for every $i$ there is a continuous function $f_i\colon K_i\longrightarrow [0,1]$ that reaches both values 0 and 1 on $A\cap K_i$ whenever $A\in\mathcal{F}$ and $A\cap K_i\neq\emptyset$. By the definition of \v{C}ech-Stone compactification, we have a continuous function $f\colon K\longrightarrow [0,1]$ such that $f|_{K_i}=f_i$, and this function contradicts that the family $\mathcal{F}$ was reaping in $\beta\bigcup_{i\in I}K_i$.
\end{proof}

As a corollary, we get that $L_\infty(\mu)$ has the $\omega$-perfect Daugavet property for every localizable measure $\mu$ since, in this case, Maharam's decomposition for $L_1(\mu)$ (see \eqref{eq:Maharam} below) produces the analogous one for $L_\infty(\mu)$ by duality, that is, 
\begin{equation*}
L_\infty(\mu)\equiv \Big(\bigoplus_{\lambda\in \Lambda}L_\infty(\{-1,1\}^{\kappa_\lambda})\Big)_{\ell_\infty},
\end{equation*}
where $\kappa_\lambda\geq  \omega$ for every $\lambda\in \Lambda$.

\begin{corollary}
Let $\mu$ be a localizable atomless measure. Then, $L_\infty(\mu)$ has the $\omega$-perfect Daugavet property.   
\end{corollary}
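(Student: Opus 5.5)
The plan is to reduce everything to the computation of reaping numbers already carried out, via Theorem~\ref{theo:CK} and Proposition~\ref{prop:linftysum-ckspaces}. We must show that $\mathfrak{pDau}(L_\infty(\mu))>\omega$, i.e.\ $\mathfrak{pDau}(L_\infty(\mu))\geq\omega_1$.

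First, use the Maharam decomposition recalled just before the statement: $L_\infty(\mu)$ is isometric to $\bigl(\bigoplus_{\lambda\in\Lambda}L_\infty(\{-1,1\}^{\kappa_\lambda})\bigr)_{\ell_\infty}$ with every $\kappa_\lambda\geq\omega$ (atomlessness rules out finite $\kappa_\lambda$). Each summand is a $C(K_\lambda)$ space with $K_\lambda$ the Stone space of the measure algebra of $\{-1,1\}^{\kappa_\lambda}$. The proposition computing $\mathfrak{Dau}(L_\infty(\{0,1\}^\kappa))$ then gives $\mathfrak{r}(K_\lambda)=\max(\operatorname{cof}\mathcal{N},\operatorname{cof}[\kappa_\lambda]^\omega)\geq\operatorname{cof}\mathcal{N}\geq\omega_1$. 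Proposition~\ref{prop:linftysum-ckspaces} identifies the $\ell_\infty$-sum with $C(K)$ for $K=\beta\bigcup_\lambda K_\lambda$, and yields $\mathfrak{r}(K)=\min_\lambda\mathfrak{r}(K_\lambda)\geq\omega_1$.

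Next, apply Theorem~\ref{theo:CK} to this $K$. The only case in which $\mathfrak{pDau}(C(K))$ differs from $\mathfrak{r}(K)$ is when $K$ has a $G_\delta$-point and no isolated point, and in that case $\mathfrak{r}(K)=\omega$. Since $\mathfrak{r}(K)\geq\omega_1$, we are not in that exceptional case. Hence $\mathfrak{pDau}(L_\infty(\mu))=\mathfrak{pDau}(C(K))=\mathfrak{r}(K)\geq\omega_1>\omega$, which is precisely the $\omega$-perfect Daugavet property.

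The only delicate point is the first step. One must check that the duality of Maharam's decomposition applies to localizable measures, and that atomlessness forces all $\kappa_\lambda$ to be infinite. Both are standard and are already asserted in the text preceding the statement, so the argument is essentially a bookkeeping combination of earlier results.
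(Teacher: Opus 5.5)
Your proposal is correct and follows essentially the same route as the paper. You dualize the Maharam decomposition to an $\ell_\infty$-sum of the spaces $L_\infty(\{-1,1\}^{\kappa_\lambda})$, use the bound $\max(\operatorname{cof}\mathcal{N},\operatorname{cof}[\kappa_\lambda]^\omega)\geq\omega_1$ for each summand, take the minimum via Proposition~\ref{prop:linftysum-ckspaces}, and pass from $\mathfrak{r}(K)$ to $\mathfrak{pDau}(C(K))$ with Theorem~\ref{theo:CK}, correctly noting that the exceptional case would force $\mathfrak{r}(K)=\omega$.
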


Another example that we can find in the literature is that the reaping number of the free Boolean algebra on $\kappa$ generators is $\kappa$ \cite[Example 12]{monk}, which translates into the fact that $\mathfrak{r}(\{0,1\}^\kappa) = \kappa$. We prove a more general fact about how the reaping number behaves in products. First, an easy observation. Remember that a \emph{$\pi$-basis} of a topological space $T$ is a family $\mathcal{B}$ of nonempty open subsets of $T$ such that every nonempty open set contains a member of $\mathcal{B}$.

\begin{lemma}\label{pibasis}
If $\mathcal{B}$ is a $\pi$-basis of a compact topological space $K$, then $\mathfrak{r}(K)$ is the minimal cardinality of a reaping family $\mathcal{F}\subseteq \mathcal{B}$. 
\end{lemma}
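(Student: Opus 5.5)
Two inequalities are needed. One direction is trivial: every reaping family $\mathcal{F}\subseteq\mathcal{B}$ is in particular a reaping family of nonempty open subsets of $K$. Hence $\mathfrak{r}(K)$ is at most the minimal cardinality of a reaping family contained in $\mathcal{B}$.

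For the converse, I will start with an arbitrary reaping family $\mathcal{F}$ of nonempty open sets with $|\mathcal{F}|=\mathfrak{r}(K)$. Since $\mathcal{B}$ is a $\pi$-basis, for each $W\in\mathcal{F}$ I choose $B_W\in\mathcal{B}$ with $B_W\subseteq W$. I then show that $\mathcal{F}'=\{B_W\colon W\in\mathcal{F}\}$ is still reaping; clearly $|\mathcal{F}'|\leq|\mathcal{F}|$.

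The reaping check uses formulation (1) of Proposition~\ref{reapingfamily}, as in the proof of Proposition~\ref{prop:0dim}. Given disjoint closed sets $L_1,L_2$, there is $W\in\mathcal{F}$ with $W\cap L_i=\emptyset$ for some $i$. Then $B_W\cap L_i\subseteq W\cap L_i=\emptyset$. The property of being disjoint from a set passes to subsets, so shrinking each member keeps the family reaping.

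There is no real obstacle here. The only point to note is that the minimum over subfamilies of $\mathcal{B}$ is attained and well defined, because $\mathcal{B}$ itself, or the $\mathcal{F}'$ above, is such a family. With that, the two inequalities give the equality.
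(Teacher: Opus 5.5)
Your proposal is correct and follows essentially the same argument as the paper: the trivial inequality, then replacing each member of a reaping family by a member of $\mathcal{B}$ contained in it, which keeps the family reaping since disjointness from $L_i$ passes to subsets. Your extra remark that the minimum over subfamilies of $\mathcal{B}$ is attained is a harmless addition. It holds because, for instance, $\mathcal{B}$ itself is reaping: $K\setminus L_1$ contains a member of $\mathcal{B}$ whenever it is nonempty.
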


\begin{proof}
    Just observe that if $\mathcal{F}$ is a reaping family and for every $A\in \mathcal{F}$ we pick $B_A\in \mathcal{B}$ such that $B_A\subseteq A$, then the family $\mathcal{F}' =\{B_A \colon A\in\mathcal{F}\}\subseteq \mathcal{B}$ is also reaping, and $|\mathcal{F}'|\leq |\mathcal{F}|$.
\end{proof}

We are now able to calculate the reaping number of a product space.

\begin{proposition}\label{reapingofproduct}
    Let $\{K_i\}_{i<\kappa}$ be an infinite family of $\kappa$ many compact spaces with more than one point, and $K=\prod_{i<\kappa}K_i$. Then $$\mathfrak{r}(K) \geq \kappa \vee \sup_{i<\kappa}\mathfrak{r}(K_i).$$
\end{proposition}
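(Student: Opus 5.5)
We have to show two inequalities: $\mathfrak{r}(K)\geq \mathfrak{r}(K_i)$ for every $i<\kappa$, and $\mathfrak{r}(K)\geq\kappa$. In both cases we will use Lemma~\ref{pibasis} with the $\pi$-basis $\mathcal{B}$ of $K$ consisting of basic open boxes $\prod_{i<\kappa}U_i$, where each $U_i$ is a nonempty open subset of $K_i$ and $U_i=K_i$ for all but finitely many $i$. For such a box $B$, write $\supp(B)$ for this finite set of coordinates. By Lemma~\ref{pibasis}, it suffices to show that every family $\mathcal{F}\subseteq\mathcal{B}$ of cardinality smaller than the claimed bound fails to be reaping. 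Concretely, we must produce a continuous $f\colon K\to[0,1]$ that attains both values $0$ and $1$ on every member of $\mathcal{F}$, as in Proposition~\ref{reapingfamily}(3).

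For $\mathfrak{r}(K)\geq \mathfrak{r}(K_i)$, fix $i$ and $\mathcal{F}\subseteq\mathcal{B}$ with $|\mathcal{F}|<\mathfrak{r}(K_i)$. Let $\pi_i$ denote the $i$-th projection. The family $\{\pi_i(B)\colon B\in\mathcal{F}\}$ consists of nonempty open subsets of $K_i$, since the coordinate $U_i$ of a box is open and nonempty, and it has cardinality less than $\mathfrak{r}(K_i)$. Hence it is not reaping in $K_i$. So there is a continuous $g\colon K_i\to[0,1]$ that attains $0$ and $1$ on each $\pi_i(B)$. Then $f=g\circ\pi_i$ attains $0$ and $1$ on each $B$, because $B\supseteq \pi_i^{-1}(\pi_i(B))\cap B$ and the other coordinates are unconstrained by $f$. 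Thus $\mathcal{F}$ is not reaping. This part is routine.

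For $\mathfrak{r}(K)\geq\kappa$, take $\mathcal{F}\subseteq\mathcal{B}$ with $|\mathcal{F}|<\kappa$. The main point is a counting argument. The set $\bigcup_{B\in\mathcal{F}}\supp(B)$ is a union of fewer than $\kappa$ finite sets. Since $\kappa$ is infinite, this union has cardinality less than $\kappa$: if $\mathcal{F}$ is finite the union is finite, and otherwise its cardinality is at most $|\mathcal{F}|\cdot\omega=|\mathcal{F}|<\kappa$. Hence there is a coordinate $j<\kappa$ outside every support, so every $B\in\mathcal{F}$ satisfies $\pi_j(B)=K_j$. Since $K_j$ has more than one point, pick $a\neq b$ in $K_j$ and, by Urysohn's lemma, a continuous $g\colon K_j\to[0,1]$ with $g(a)=0$ and $g(b)=1$. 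Then $f=g\circ\pi_j$ attains both $0$ and $1$ on every $B\in\mathcal{F}$, since each box contains points with arbitrary $j$-th coordinate. Therefore $\mathcal{F}$ is not reaping. Combining the two parts gives $\mathfrak{r}(K)\geq\kappa\vee\sup_i\mathfrak{r}(K_i)$.

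The only delicate step is the cardinality count in the second part. Note that it requires the index set to be infinite, which is part of the hypothesis, and that it handles separately the case where $\mathcal{F}$ is finite.
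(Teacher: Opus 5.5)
Your proof is correct and follows the paper's argument. Both use the $\pi$-basis of finitely supported open boxes via Lemma~\ref{pibasis}, project onto one coordinate to compare with $\mathfrak{r}(K_i)$, and use a counting argument to find a coordinate $j$ left unconstrained by every member of a family of size $<\kappa$, then compose a Urysohn function on $K_j$ with $\pi_j$. The differences are cosmetic: you phrase the first part contrapositively, and you spell out the cardinality count that the paper leaves as ``a cardinality argument''.
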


\begin{proof}
We can consider the $\pi$-basis $\mathcal{B}$ of $K$ consisting of the products $\prod_{i<\kappa}W_i$ of open sets where $W_i=K_i$ for all but finitely many $i$. First, we fix $i<\kappa$ and prove that $\mathfrak{r}(K_i)\leq \mathfrak{r}(K)$. Let $\mathcal{F}\subset \mathcal{B}$ be a family of cardinality $\mathfrak{r}(K)$ such that no $f\in C(K)$ reaches 0 and 1 on all members of $\mathcal{F}$. Consider $\mathcal{F}_i$ the family of all the $i$-th coordinate factors of members of $\mathcal{F}$. It is enough to check that no $g\in C(K_i)$ reaches 0 and 1 on all members of $\mathcal{F}_i$. But this is clearly true because otherwise the composition of $g$ with the $i$-th projection would reach 0 and 1 on all members of $\mathcal{F}$. Now we prove that $\kappa\leq \mathfrak{r}(K)$. For this, we need to show that no reaping family $\mathcal{F}\subseteq \mathcal{B}$ for $K$ has size less than $\kappa$. Indeed, if we assume by contradiction that there exists a reaping family of cardinality less than $\kappa$, by a cardinality argument there must exists a coordinate $i$ such that the $i$-th coordinate of all members of $\mathcal{F}$ is $K_i$. Consider any continuous function $g\in C(K_i)$ that reaches 0 and 1. The composition of $g$ with the $i$-th projection gives $f\in C(K)$ that reaches 0 and 1 on all members of $\mathcal{F}$, a contradiction with the fact that $\mathcal F$ is a reaping family. 
\end{proof}

For finite products, the above proof shows that $$\mathfrak{r}(K_1\times\cdots\times K_n) \geq \mathfrak{r}(K_1)\vee\cdots \vee \mathfrak{r}(K_n).$$ We do not know if the converse inequality holds here or in Proposition~\ref{reapingofproduct}. We make some remarks on this problem later, see Proposition~\ref{prop:product+}.

A consequence of Proposition~\ref{reapingofproduct} and Theorem~\ref{theo:CK} is the following result.

\begin{corollary}
    If $\kappa$ is an infinite cardinal and $K$ is a product of $\kappa$ many compact spaces with at least two points, then $\mathfrak{Dau}(C(K))\geq \kappa$.
\end{corollary}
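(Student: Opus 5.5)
This corollary combines Proposition~\ref{reapingofproduct} with Theorem~\ref{theo:CK}. Let $K=\prod_{i<\kappa}K_i$ with each $K_i$ compact and having at least two points. Proposition~\ref{reapingofproduct} gives
$$\mathfrak{r}(K)\geq \kappa\vee\sup_{i<\kappa}\mathfrak{r}(K_i)\geq\kappa.$$
So the task reduces to passing from $\mathfrak{r}(K)$ to $\mathfrak{Dau}(C(K))$. Theorem~\ref{theo:CK} asserts $\mathfrak{Dau}(C(K))=\mathfrak{r}(K)$ in both of its cases. In the generic case all four indices equal $\mathfrak{r}(K)$. In the exceptional case, where $K$ has a $G_\delta$-point but no isolated point, $\mathfrak{Dau}(C(K))=\mathfrak{r}(K)=\omega$ still holds. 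Hence $\mathfrak{Dau}(C(K))=\mathfrak{r}(K)\geq\kappa$ always.

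One point should be checked rather than assumed: that the hypotheses of Proposition~\ref{reapingofproduct} are met. That proposition needs an infinite family of $\kappa$ compact spaces each with more than one point, which is exactly our assumption. Inside its proof, the key step uses a continuous $g\in C(K_i)$ taking both values $0$ and $1$. Such a $g$ exists by Urysohn's lemma because $K_i$ has two distinct points. Beyond this there is no further obstacle.

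As a sanity remark, in the exceptional case the conclusion forces $\kappa\leq\omega$. This is consistent: a product of uncountably many nontrivial compacta has no $G_\delta$-points, since any $G_\delta$ set in the product constrains only countably many coordinates. So no separate case analysis is needed, and the proof is the two-line chain above.
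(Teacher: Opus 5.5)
Your proposal is correct and is exactly the paper's argument. Proposition~\ref{reapingofproduct} gives $\mathfrak{r}(K)\geq\kappa$, and Theorem~\ref{theo:CK} gives $\mathfrak{Dau}(C(K))=\mathfrak{r}(K)$ in both of its cases (in the exceptional case both equal $\omega$), so your side remark that this case forces $\kappa=\omega$ is correct but not needed.
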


\begin{remark}\label{remark:C(K)injlargdaugavet}
Another Boolean algebra for which the reaping number is computed in the literature is the completion of the free Boolean algebra on $\kappa$ generators, that has reaping number $\kappa$ \cite[Proposition 26]{monk}. This algebra was a key tool in the study of transfinite octahedrality properties in \cite{amcrz23}. Its associated compact space by Stone duality $K$ satisfies that $\mathfrak r(K)=\kappa$ and is extremally disconnected (the closure of every open set is open, also called Stonean), so $C(K)$ is 1-injective, see \cite[Theorem 2.5.11]{dualCK}, and $\mathfrak{pDau}(C(K))=\mathfrak r(K)=\kappa$ according to Theorem~\ref{theo:CK}.
\end{remark}

Concerning the above remark we can say even more, in fact, the associated compact space by Stone duality of the completion of the free Boolean algebra on $\kappa$ generators is the \emph{Gleason cover} of $\{0,1\}^\kappa$. The Gleason cover of a compact space $K$ is a compact space $G(K)$ that is characterised by being extremally disconnected and by the existence of an irreducible continuous surjection $f\colon G(K)\longrightarrow K$, cf.\ \cite{walker}. Recall that a continuous surjection $f\colon L\longrightarrow K$ is irreducible if the restriction of $f$ to any proper closed subspace of $L$ is never surjective onto $K$. This is equivalent to saying that for every nonempty open set $A\subseteq L$ there is a nonempty open set $B\subseteq K$ such that $f^{-1}(B)\subseteq A$. An observation that is useful to estimate the reaping number of Gleason covers is the following:

\begin{proposition}\label{irreduciblereduction}
    If $f\colon L\longrightarrow K$ is an irreducible surjection, then $\mathfrak{r}(K)\leq \mathfrak{r}(L)$.
\end{proposition}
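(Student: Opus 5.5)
The plan is to transport a reaping family from $L$ to $K$ by taking interiors of images. Let $\mathcal{F}$ be a reaping family of nonempty open subsets of $L$ with $|\mathcal{F}|=\mathfrak{r}(L)$. By the characterisation of irreducibility recalled just before the statement, for every $A\in\mathcal{F}$ we can choose a nonempty open set $B_A\subseteq K$ with $f^{-1}(B_A)\subseteq A$. We then take $\mathcal{F}'=\{B_A\colon A\in\mathcal{F}\}$. This is a family of nonempty open subsets of $K$ with $|\mathcal{F}'|\leq|\mathcal{F}|$. It therefore suffices to show that $\mathcal{F}'$ is reaping in $K$.

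For this we use condition (3) of Proposition~\ref{reapingfamily} and argue by contradiction. Suppose some $g\in C(K)$ reaches both values $0$ and $1$ on every $B_A$. Consider $g\circ f\in C(L)$. Fix $A\in\mathcal{F}$ and pick points $s,t\in B_A$ with $g(s)=0$ and $g(t)=1$. Since $f$ is surjective, there are $s',t'\in L$ with $f(s')=s$ and $f(t')=t$. Both $s'$ and $t'$ lie in $f^{-1}(B_A)\subseteq A$, and $(g\circ f)(s')=0$, $(g\circ f)(t')=1$. So $g\circ f$ reaches $0$ and $1$ on every member of $\mathcal{F}$, which contradicts that $\mathcal{F}$ is reaping in $L$. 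Hence $\mathcal{F}'$ is reaping in $K$, and $\mathfrak{r}(K)\leq|\mathcal{F}'|\leq\mathfrak{r}(L)$.

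The only point needing care is the equivalence between irreducibility and the existence of $B_A$. The paper states it, but it can be checked directly. Since $f$ is a closed map, $B_A:=K\setminus f(L\setminus A)$ is open, and $f^{-1}(B_A)\subseteq A$ holds by construction. This set is nonempty because $L\setminus A$ is a proper closed subset, and irreducibility says $f(L\setminus A)\neq K$. So I expect no real obstacle. The argument is essentially Lemma~\ref{pibasis}, with pullbacks under $f$ in place of a $\pi$-basis.
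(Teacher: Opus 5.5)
Your proof is correct and matches the paper's: the paper picks nonempty open sets $B_i\subseteq f(A_i)$, which your $B_A$ with $f^{-1}(B_A)\subseteq A$ satisfies by surjectivity, and then notes that a $g\in C(K)$ reaching $0$ and $1$ on every $B_i$ would make $g\circ f$ reach $0$ and $1$ on every $A_i$. Your direct check that $B_A=K\setminus f(L\setminus A)$ is nonempty, open and satisfies $f^{-1}(B_A)\subseteq A$ is a useful detail that the paper leaves implicit.
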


\begin{proof}
    If $\{A_i\}_{i<\mathfrak{r}(L)}$ is a reaping family for $L$, and we choose nonempty open sets $B_i\subseteq f(A_i)$, then $\{B_i\}_{i<\mathfrak{r}(L)}$ is reaping for $K$. This is because if $g\in C(K)$ reached 0 and 1 on every $B_i$, then $g\circ f$ would reach 0 and 1 on every $A_i$. We conclude that $\mathfrak{r}(K)\leq \mathfrak{r}(L)$. 
\end{proof}

\begin{proposition}\label{dualremark} Let  $K$ be the Gleason cover of a perfect compact metric space $M$. Then $C(K)$ is a 1-injective Banach space  isomorphic to $\ell_\infty$ that satisfies $\mathfrak{Dau}(C(K)) = \mathfrak{pDau}(C(K)) = \omega.$
\end{proposition}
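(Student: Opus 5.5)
The proof has three parts: identify $C(K)$ as a $1$-injective space isomorphic to $\ell_\infty$, show that $\mathfrak{r}(K)=\omega$, and show that $K$ has no $G_\delta$-points. Theorem~\ref{theo:CK} then gives $\mathfrak{pDau}(C(K))=\mathfrak{Dau}(C(K))=\mathfrak r(K)=\omega$.

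\emph{The Banach space.} Since $K$ is extremally disconnected, $C(K)$ is $1$-injective by \cite[Theorem 2.5.11]{dualCK}. Let $M$ be a perfect compact metric space. The regular open algebra of $M$ has a countable $\pi$-basis, for instance the rational balls. It is atomless because $M$ has no isolated points. Hence it is the completion of the countable free (Cantor) algebra, and the Gleason cover of $M$ coincides with the Gleason cover of $\{0,1\}^\omega$. That space is the Stone space of the completion of $\mathcal P(\omega)/\mathit{fin}$-like countable atomless algebras. More concretely, it is separable, since the countable $\pi$-basis pulls back to a countable $\pi$-basis, and $C(K)$ embeds into $\ell_\infty$ via point evaluations at a countable dense set. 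Any injective Banach space containing $\ell_\infty$ (clear, since $K$ is infinite extremally disconnected) and embedding into $\ell_\infty$ is isomorphic to $\ell_\infty$ by Pełczyński's decomposition method, because both are complemented in each other. In practice I would just cite the classical fact that $C(G(\{0,1\}^\omega))\simeq\ell_\infty$.

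\emph{The reaping number.} Proposition~\ref{irreduciblereduction} gives $\mathfrak r(M)\leq\mathfrak r(K)$. Since $M$ is perfect, $\mathfrak r(M)\geq\omega$: the argument in the proof of Theorem~\ref{theo:CK} shows that finite $\mathfrak r$ forces an isolated point. For the upper bound, the pullbacks of the countably many rational balls form a countable $\pi$-basis $\mathcal B$ of $K$ by irreducibility. The key step is to produce a countable reaping family directly in $K$. Equivalently, by Theorem~\ref{theo:CK} applied to $K$, it suffices to exhibit a countable family $\{y_n\}\subseteq S_{C(K)}$ that fails octahedrality. I would try, for each $B\in\mathcal B$, a clopen $A_B\subseteq B$, and show that $\{A_B\}$ is reaping. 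Given disjoint closed $L_1,L_2$, choose a clopen $C\supseteq L_1$ disjoint from $L_2$ (possible since $K$ is zero-dimensional). Some $A_B$ lies inside $C$ or inside $K\setminus C$, provided $\mathcal B$ is chosen to include a clopen basis-refinement of both. A single fixed family must do this for all $C$ simultaneously, and that is false in general. So I would instead use that $K$ is separable: fix a dense sequence $(d_n)$ and a $\pi$-basis built so that every clopen set meets one of them nicely.

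Countability of the reaping number is where I expect trouble, and the cleanest route is to avoid it. Instead, I would show that $K$ has a $G_\delta$-point, or an isolated point, to force the value. But $K$ has no isolated points: irreducibility and perfectness of $M$ rule them out. Extremally disconnected compact spaces of this type have no nontrivial convergent sequences and no $G_\delta$-points, so the perfect Daugavet property holds. The equality $\mathfrak r(K)=\omega$ must then come from an explicit countable reaping family. For this I would use that $\mathfrak r(K)\leq\pi w(K)=\omega$ fails in general, and instead argue via the dual statement: $\mathfrak{Dau}(X)\leq\dens$ of a norming-type separable subspace. Concretely, take the countably many functions $\chi_{A_B}$. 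Suppose some $g\in S_{C(K)}$ had $\|\chi_{A_B}\pm g\|$ near $2$ for all $B$. Then $\{g>7/8\}$ and $\{g<-7/8\}$ would both meet every $A_B$, hence both would be dense. That is impossible for the disjoint open sets $\{g>7/8\}$ and $\{g<-7/8\}$, since their closures would both be all of $K$. This last density argument is the crux, and it actually settles $\mathfrak r(K)\leq\omega$ for any compact space with a countable $\pi$-basis.
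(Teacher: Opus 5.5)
Your proof is correct and follows the paper's route. The pullbacks of a countable base of $M$ form a countable $\pi$-basis of $K$, which gives no isolated points, $\mathfrak r(K)=\omega$ and separability (hence $C(K)\simeq\ell_\infty$), while extremal disconnectedness forces $G_\delta$-points to be isolated, so Theorem~\ref{theo:CK} yields $\mathfrak{Dau}(C(K))=\mathfrak{pDau}(C(K))=\omega$; the one fact the paper actually proves that you only assert is that last topological statement. Your aside that $\mathfrak r(K)\leq\pi w(K)$ ``fails in general'' is false: every $\pi$-basis is trivially a reaping family (if $L_1,L_2$ are disjoint closed and $L_2\neq\emptyset$, the nonempty open set $K\setminus L_1$ contains a member of the $\pi$-basis), which is exactly what your closing density argument shows, so the detour through octahedrality is unnecessary.
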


\begin{proof}
Let $f\colon K\longrightarrow M$ be the irreducible surjection. The family of those sets of the form $f^{-1}(W)$ for all basic open sets $W$ of $M$, constitutes a countable $\pi$-basis of $K$. It follows from this that $K$ has no isolated points and $\mathfrak{r}(K) = \omega$. Moreover, picking a point in each set of the countable $\pi$-basis, one gets that $K$ is separable, which gives that $C(K)$ is isomorphic to $\ell_\infty$, by~\cite[Corollary 6.9.5]{dualCK}.  For the equality $\mathfrak{pDau}(C(K))=\omega$  we have to show that $K$ has no $G_\delta$-points, and this is because in an extremally disconnected compact space $G_\delta$-points are isolated. Here is an elementary proof: Suppose that $x$ is a non-isolated $G_\delta$-point and we have open neighborhoods $\{W_n\}_{n<\omega}$ of $x$ with $\overline{W_{n+1}}\subsetneq W_n$ and $\bigcap_{n<\omega}W_n = \{x\}$. By compactness, the $W_n$ form a basis of neighborhoods of $x$. Define $V = \bigcup_{n<\omega}W_{2n}\setminus \overline{W_{2n+1}}$. Then, since $x\in \overline{V}$, the definition of extremally disconnected implies that there exists $m$ such that $W_m\subseteq \overline{V}$, which contradicts the fact that $W_{2m+1}\setminus \overline{W_{2m+2}}$ is contained in $W_m$ but disjoint from $V$.
\end{proof}

We do not know if there is any Banach space \emph{isometric} to a dual space for which $\mathfrak{Dau}(X)=\omega$ (with the Daugavet property but not the $\omega$-Daugavet property).

Taking the one-point compactification of a discrete union of $\kappa$ copies of $K$ in Proposition~\ref{dualremark} we can get spaces with $\mathfrak{Dau}
(C(K)) = \mathfrak{pDau}
(C(K)) = \omega$ of arbitrarily high density character $\kappa\geq \mathfrak{c}$ of $C(K)$. If we consider  $K$ the one-point compactification of the discrete union of $\kappa$ many copies of the unit interval, then $K$ has no isolated points but has $G_\delta$-points, so that gives examples of $$\mathfrak{pDau}(C(K)) = 1 <\omega = \mathfrak{Dau}(C(K))$$  and arbitrary uncountable density character $\kappa$ of $C(K)$.

\subsection{Other cardinal invariants related to the reaping number.}

Theorem~\ref{theo:CKDaugavet} shows that the non-transfinite Daugavet and perfect Daugavet properties of $C(K)$ are characterised by the lack of points with a small local basis. Do we have a similar result for the transfinite case? We are going to see now that there is something going on along this line, but things become more complicated. The key notion is the following:

\begin{definition}
    Let $K$ be a topological space and $x\in K$. A $\pi$-basis of the point $x$ is a family $\mathcal{B}$ of nonempty open subsets of $K$ such that every neighborhood of $x$ contains a member of $\mathcal{B}$. The minimal cardinality of a $\pi$-basis of $x$ is called $\pi_\chi(x)$, the $\pi$-character of $x$.
\end{definition}

The difference with the more popular notion of basis of neighborhoods of $x$ is that the elements of the $\pi$-basis $\mathcal{B}$ are not required to contain the point $x$. The following fact noticed by Balcar and Simon \cite{bs1} for $K$ totally disconnected holds for general compact spaces:

\begin{proposition}\label{prop:picharacter}
For every compact space $K$, we have $$\mathfrak{r}(K) \leq \min\{\pi_\chi(x) \colon  x\in K \}.$$
\end{proposition}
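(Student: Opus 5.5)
Fix a point $x\in K$ and a $\pi$-basis $\mathcal{B}$ of $x$ with $|\mathcal{B}|=\pi_\chi(x)$. The plan is to show that $\mathcal{B}$ is itself a reaping family in the sense of Proposition~\ref{reapingfamily}. This gives $\mathfrak{r}(K)\leq |\mathcal{B}| = \pi_\chi(x)$. Since $x$ is arbitrary, taking the minimum over $x\in K$ yields the statement.

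To verify the reaping property, I will use formulation (2) of Proposition~\ref{reapingfamily}. Let $V_1,V_2$ be open sets with $K=V_1\cup V_2$. Then $x$ belongs to $V_1$ or to $V_2$, say $x\in V_j$. So $V_j$ is a neighbourhood of $x$, and by the definition of a $\pi$-basis of $x$ there is $W\in\mathcal{B}$ with $W\subseteq V_j$. This is exactly condition (2), so $\mathcal{B}$ is reaping.

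Equivalently, one can use (1): given disjoint closed $L_1,L_2$, the point $x$ lies outside one of them, say $x\notin L_i$. Then $K\setminus L_i$ is a neighbourhood of $x$ and contains some $W\in\mathcal{B}$, so $W\cap L_i=\emptyset$.

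There is no real obstacle here. The only thing to double-check is that members of a $\pi$-basis are nonempty open sets, as required of a reaping family, and this holds by definition. Compactness is not even needed for this inequality; it only enters because $\mathfrak{r}(K)$ was defined for compact spaces.
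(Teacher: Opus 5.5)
Your proof is correct and follows the paper's argument: a $\pi$-basis of any point $x$ is itself a reaping family, because in formulation (2) of Proposition~\ref{reapingfamily} one of $V_1,V_2$ is a neighbourhood of $x$ and so contains a member of the $\pi$-basis. Your side remark is also right: compactness plays no role in this inequality.
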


\begin{proof}
    It is obvious that a $\pi$-basis of any point $x$ is also a reaping family in $K$. If we use Proposition~\ref{reapingfamily}(2), either $V_1$ or $V_2$ will be a neighborhood of $x$.
\end{proof}

If that inequality could be reversed, this would be the characterisation we were dreaming of in terms of local behavior. However, Balcar and Simon \cite{bs2} gave an example of a totally disconnected compact space for which $\mathfrak{r}(K) < \min\{\pi_\chi(x) \colon  x\in K \}$. The equality has been proved only in some special cases like extremally disconnected or Stone spaces of homogeneous Boolean algebras \cite{bs1,dsw}. However, the following remarkable result from \cite{dsw} says that, in the totally disconnected case, these two cardinals cannot differ very much.

\begin{theorem}[\mbox{Dow, Stepr\={a}ns, Watson, \cite{dsw}}]\label{theo:dsw}
    For every totally disconnected compact space $K$, we have $$\mathfrak{r}(K) \leq \min\{\pi_\chi(x) \colon  x\in K \}
     \leq \mathfrak{r}(K)^+.$$
    In other words, either $\min\{\pi_\chi(x) \colon  x\in K \} = \mathfrak{r}(K)$ or $\min\{\pi_\chi(x) \colon  x\in K \}=\mathfrak{r}(K)^+$.
\end{theorem}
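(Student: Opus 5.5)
The first inequality $\mathfrak{r}(K)\leq \min\{\pi_\chi(x)\colon x\in K\}$ is exactly Proposition~\ref{prop:picharacter}, so only the upper bound $\min_x \pi_\chi(x)\leq \mathfrak{r}(K)^+$ needs work. By Proposition~\ref{prop:0dim} and Lemma~\ref{pibasis}, I will work in the Boolean algebra $\mathbb{B}$ of clopens of $K$ with clopen reaping families. Write $\kappa=\mathfrak{r}(K)$. The plan is to construct by transfinite recursion of length $\kappa^+$ a point $x\in K$ together with a family $\mathcal{P}=\bigcup_{\xi<\kappa^+}\mathcal{R}_\xi$ of nonempty clopens with $|\mathcal{R}_\xi|\leq\kappa$, and to show that $\mathcal{P}$ is a $\pi$-basis at $x$. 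This gives $\pi_\chi(x)\leq\kappa\cdot\kappa^+=\kappa^+$.

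\textbf{Step 1 (localising).} For nonzero $a\in\mathbb{B}$, let $\mathfrak{r}(a)$ be the reaping number of the relative algebra $\mathbb{B}\restriction a$, which is the reaping number of the clopen subspace $a$. I will first pick $a_0$ minimising $\mathfrak{r}(a)$ and replace $K$ by $a_0$. Enlarging the final $\pi$-basis by at most $\kappa$ further sets is harmless, so this loses nothing. It yields a number $\lambda\leq\kappa$ that is hereditary: every nonzero $c\leq a_0$ has $\mathfrak{r}(c)\geq\lambda$. Whether $\lambda\leq\kappa$ holds, i.e.\ whether some clopen piece has reaping number at most that of $K$, is itself something to check. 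A reaping family $\mathcal{R}$ of $K$ should, for a suitable member $a\in\mathcal{R}$, induce a reaping family $\{r\wedge a\colon r\in\mathcal{R},\, r\wedge a\neq 0\}$ of size $\leq\kappa$ in $\mathbb{B}\restriction a$. I plan to prove this by contradiction: if it failed for every $a$, the witnesses in each piece would glue into a single clopen that $\mathcal{R}$ does not reap.

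\textbf{Step 2 (recursion).} I will build a decreasing sequence of nonempty closed sets $F_\xi$, $\xi<\kappa^+$. Each $F_\xi$ is an intersection of fewer than $\kappa^+$ clopens and has nonempty interior relative to the preceding stage. At stage $\xi$ I take a family $\mathcal{R}_\xi$ of at most $\lambda$ clopens meeting $F_\xi$ that is reaping relative to $F_\xi$: for every clopen $b$, some $r\in\mathcal{R}_\xi$ has $r\cap F_\xi\subseteq b$ or $r\cap F_\xi\cap b=\emptyset$. Such a family exists by the minimality in Step 1, applied to a clopen $c$ with $\emptyset\neq c\cap F_\xi$ chosen inside the traces. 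I then shrink $F_{\xi+1}\subseteq F_\xi\cap r_\xi$ for a well-chosen $r_\xi\in\mathcal{R}_\xi$. Bookkeeping over a list of clopens of length $\kappa^+$ is impossible, since there may be more clopens than that. Instead, the choice of $r_\xi$ is made to "decide" previously reaped clopens, in the spirit of the Balcar--Simon proof for homogeneous algebras. Finally I let $x$ be any point of $\bigcap_\xi F_\xi$, which is nonempty by compactness.

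\textbf{Step 3 (verification) and the main obstacle.} Given a clopen neighbourhood $b$ of $x$, I must find a member of $\mathcal{P}$ inside $b$. The argument I would try is a counting or closing-off argument. Take an elementary submodel $M$ of size $\kappa$ containing the relevant data and closed under the recursion. Let $\delta=M\cap\kappa^+$, an ordinal of cofinality at most $\kappa$. At stage $\delta$ the family $\mathcal{R}_\delta$ reaps $b$ relative to $F_\delta$. The alternative $r\cap F_\delta\cap b=\emptyset$ must be excluded for the $r$ actually chosen, because the recursion continues inside $r$ and $x\in b$. This forces the good alternative $r\cap F_\delta\subseteq b$. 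Upgrading this to a genuine clopen of $\mathcal{P}$ contained in $b$ requires that $F_\delta$ be the intersection of at most $\kappa$ clopens already listed in $\mathcal{P}$; for that I would add to $\mathcal{P}$ the finite intersections of the chosen $r_\eta$ with members of $\mathcal{R}_\delta$. I expect this step to be the main obstacle. At limit stages of small cofinality the intersection $F_\delta$ may contain no clopen, so the relative reaping at $F_\delta$ must be transferred back to honest clopens. This transfer is exactly where the successor $\kappa^+$, rather than $\kappa$, is spent.
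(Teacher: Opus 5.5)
There is a genuine gap. Your lower bound via Proposition~\ref{prop:picharacter} is fine. For the upper bound, Step~3, which you flag as the main obstacle, carries the whole content of the theorem, and the mechanism of Steps~2--3 does not work as stated.

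\emph{Step~2.} A family of at most $\kappa$ clopens that reaps relative to $F_\xi$ is just a reaping family of the compact space $F_\xi$, since clopens of $F_\xi$ are traces of clopens of $K$. Intersections of clopens can have much larger reaping number than $K$. For instance, $\mathfrak{r}(\beta\omega)=1$, while $\beta\omega\setminus\omega=\bigcap_n(\beta\omega\setminus\{0,\dots,n\})$ has reaping number $\mathfrak{r}\geq\omega_1$. Your Step~1 gives no control here, because it only concerns clopen pieces. It is in fact empty: a reaping family of the relative algebra below $a\neq 0$ is already reaping in $\mathbb{B}$, so $\mathfrak{r}(a)\geq\kappa$ for all $a$ and $\lambda=\kappa$.

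\emph{Step~3.} Reaping at stage $\delta$ only gives some $r\in\mathcal{R}_\delta$, depending on $b$, for which one of the two alternatives holds. This $r$ need not be the $r_\delta$ you fixed before $b$ was given. Nor can the alternative $r\cap F_\delta\cap b=\emptyset$ be excluded for it, since $x$ need not lie in $r$. So no member of $\mathcal{P}$ is shown to lie inside $b$. Any correct proof must also spend the successor in a precise way, because of the Balcar--Simon example recalled in the paper, where $\mathfrak{r}(K)<\min_x\pi_\chi(x)$. Your sketch does not say where this happens.

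The missing idea is the passage from $2$-piece to $n$-piece partitions. The route in the paper (Lemma~\ref{twostepscardinals}, proved in general form in Proposition~\ref{twostepscardinalsnew}) never builds the point by recursion.

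\emph{First}, $\min_x\pi_\chi(x)=\sup_n\mathfrak{r}_{n,2}(\mathbb{B})$. If $\mathcal{F}_n$ witnesses $\mathfrak{r}_{n,2}(\mathbb{B})$, the closed sets meeting every member of $\mathcal{F}=\bigcup_n\mathcal{F}_n$ have the finite intersection property. Any point of their intersection then has $\mathcal{F}$ as a $\pi$-basis, so compactness produces the point.

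\emph{Second}, $\mathfrak{r}_{n,2}(\mathbb{B})\leq\kappa^+$ for every $n$, and one may take $n$ prime.
\begin{enumerate}
\item Since $\mathfrak{r}_{n,n}(\mathbb{B})\leq\kappa$, if $\mathfrak{r}_{n,2}(\mathbb{B})>\kappa$ let $q$ be the largest integer with $\mathfrak{r}_{n,q}(\mathbb{B})>\kappa$. Fix a witness family $\mathcal{A}$ for $\theta:=\mathfrak{r}_{n,q+1}(\mathbb{B})\leq\kappa$.
\item For $\alpha<\theta^+$, build $n$-partitions $f_\alpha$ such that each $f_\alpha$ meets in at least $q$ pieces every nonempty set $W\cap p_1\cap\dots\cap p_\eta$. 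Here $W\in\mathcal{A}$, $\eta\leq q$, and the $p_j$ are pieces of distinct earlier partitions; there are only $\theta$ such sets at each stage.
\item Assuming $\mathfrak{r}_{n,2}(\mathbb{B})>\theta^+$, take a final $n$-partition $f$ splitting all these sets, and colour by $c_\alpha=f_\alpha+f\bmod n$.
\item Pigeonhole over $\theta^+$ stages against the $\theta$ members of $\mathcal{A}$ gives $q$ stages with a common witness $W$ and a common set of at most $q$ colours on $W$.
\item The closing-off then yields a $q\times 2$ rectangle on which $u+v\bmod n$ takes at most $q$ values. This is impossible for $n$ prime and $q<n$.
\end{enumerate}
Nothing in your proposal plays the role of this colouring-and-pigeonhole step, which is exactly where $\kappa^+$ is used.
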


Combining Proposition~\ref{prop:picharacter} and Theorem~\ref{theo:dsw} with Theorem~\ref{theo:CK}, we can state:

\begin{corollary}\label{cor:picharacter}
    Let $K$ be a compact space and $\kappa$ an infinite cardinal. \begin{enumerate} \item If $C(K)$ has the $\kappa$-Daugavet property, then no point of $K$ has a $\pi$-basis of cardinality $\leq \kappa$. \item If $K$ is totally disconnected and $C(K)$ fails the $\kappa$-Daugavet property, then there is a point in $K$ with a $\pi$-basis of cardinality $\leq \kappa^+$.
    \end{enumerate}    
\end{corollary}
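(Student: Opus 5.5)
The plan is to translate both parts through Theorem~\ref{theo:CK}, which gives $\mathfrak{Dau}(C(K))=\mathfrak{r}(K)$ except in the exceptional case where $K$ has a $G_\delta$-point and no isolated point. In that exceptional case $\mathfrak{Dau}(C(K))=\mathfrak{r}(K)=\omega$ anyway, so the identity $\mathfrak{Dau}(C(K))=\mathfrak{r}(K)$ holds for every compact $K$. Having the $\kappa$-Daugavet property then means exactly $\kappa<\mathfrak{r}(K)$, and failing it means $\mathfrak{r}(K)\leq\kappa$.

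For (1), suppose $C(K)$ has the $\kappa$-Daugavet property, so $\kappa<\mathfrak{r}(K)$. Proposition~\ref{prop:picharacter} gives $\mathfrak{r}(K)\leq\pi_\chi(x)$ for every $x\in K$, hence $\pi_\chi(x)>\kappa$. Therefore no point of $K$ admits a $\pi$-basis of cardinality $\leq\kappa$. Equivalently, a $\pi$-basis of a point is a reaping family, so a $\pi$-basis of size $\leq\kappa$ would force $\mathfrak{r}(K)\leq\kappa$.

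For (2), let $K$ be totally disconnected and suppose $C(K)$ fails the $\kappa$-Daugavet property, so $\mathfrak{r}(K)\leq\kappa$. Theorem~\ref{theo:dsw} gives $\min\{\pi_\chi(x)\colon x\in K\}\leq\mathfrak{r}(K)^+$. Since $\mathfrak{r}(K)\leq\kappa$ implies $\mathfrak{r}(K)^+\leq\kappa^+$, the minimum is at most $\kappa^+$. This minimum is attained because cardinals are well ordered, so some point $x$ has $\pi_\chi(x)\leq\kappa^+$, that is, a $\pi$-basis of cardinality $\leq\kappa^+$. If $\mathfrak{r}(K)$ is finite it equals $1$, which happens when $K$ has an isolated point $x$. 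Then $\{\{x\}\}$ is a $\pi$-basis of size $1$ and the conclusion is trivial.

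The only delicate step is checking that the equality $\mathfrak{Dau}(C(K))=\mathfrak{r}(K)$ really covers the exceptional case of Theorem~\ref{theo:CK}, where the two values are read off a different chain of equalities. The rest is direct bookkeeping with successor cardinals.
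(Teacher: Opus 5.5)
Your proposal is correct and follows the paper's own route. The paper obtains this corollary exactly by combining Proposition~\ref{prop:picharacter} for (1) and Theorem~\ref{theo:dsw} for (2) with the identity $\mathfrak{Dau}(C(K))=\mathfrak{r}(K)$ from Theorem~\ref{theo:CK}, which, as you check, also holds in the exceptional case; your separate treatment of finite $\mathfrak{r}(K)$ is harmless but unnecessary, since $\mathfrak{r}(K)\leq\kappa$ already gives $\mathfrak{r}(K)^+\leq\kappa^+$.
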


Later discussion, cf.\ Proposition~\ref{twostepscardinalsnew}(1), suggests the following shorter notation for the minimal $\pi$-character of a compact space: $$\tilde{\mathfrak{r}}_\omega(K) = \min\{\pi_\chi(x) \colon x\in K\}.$$
This cardinal invariant often behaves in a more friendly way than the reaping number, which combined with Theorem~\ref{theo:dsw} can sometimes tell that reaping number behaves \emph{nicely up to a successor} on totally disconnected compact spaces. For example, we are only able to prove the inequality $\mathfrak{r}(K\times L)\geq \mathfrak{r}(K)\vee \mathfrak{r}(L)$, but it is a trivial exercise that $\tilde{\mathfrak{r}}_\omega(K\times L) = \tilde{\mathfrak{r}}_\omega(K)\vee \tilde{\mathfrak{r}}_\omega(L)$ for any compact spaces $K$, $L$. From this, we conclude:

\begin{proposition}\label{prop:product+}
If $K$ and $L$ are totally disconnected compact spaces, then $$\mathfrak{r}(K)\vee \mathfrak{r}(L)\leq \mathfrak{r}(K\times L) \leq \mathfrak{r}(K)^+\vee \mathfrak{r}(L)^+.$$
\end{proposition}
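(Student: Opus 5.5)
The lower bound $\mathfrak{r}(K)\vee \mathfrak{r}(L)\leq \mathfrak{r}(K\times L)$ is the finite-product case of the argument in Proposition~\ref{reapingofproduct}; we only recall it. Take a reaping family $\mathcal{F}$ for $K\times L$ made of basic rectangles $U\times V$, which we may do by Lemma~\ref{pibasis}. Then the family of first factors $\{U\}$ is reaping in $K$. Indeed, if some $g\in C(K)$ attained both $0$ and $1$ on every $U$, then $g\circ\pi_K$ would attain both values on every $U\times V$. This gives $\mathfrak{r}(K)\leq|\mathcal{F}|$, and symmetrically for $L$. Note that $K\times L$ is again a totally disconnected compact space.

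For the upper bound, the plan is to pass through the minimal $\pi$-character $\tilde{\mathfrak{r}}_\omega$. The chain of inequalities is
\[
\mathfrak{r}(K\times L)\leq \tilde{\mathfrak{r}}_\omega(K\times L)=\tilde{\mathfrak{r}}_\omega(K)\vee\tilde{\mathfrak{r}}_\omega(L)\leq \mathfrak{r}(K)^+\vee\mathfrak{r}(L)^+ .
\]
The first inequality is Proposition~\ref{prop:picharacter} applied to the compact space $K\times L$. The last inequality is Theorem~\ref{theo:dsw} (Dow--Stepr\={a}ns--Watson) applied separately to $K$ and to $L$. This is the only place where total disconnectedness is used, and it is essential there.

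The remaining step is the middle equality. Strictly we only need $\leq$, which amounts to $\pi_\chi((x,y))\leq\pi_\chi(x)\vee\pi_\chi(y)$ for every point $(x,y)$. We then minimise over $x$ and $y$ independently, since the minimum over $K\times L$ of the maximum is at most the maximum of the two separate minima.

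To prove that inequality, let $\mathcal{B}_x$ and $\mathcal{B}_y$ be $\pi$-bases at $x$ and $y$, and set $\mathcal{B}=\{U\times V\colon U\in\mathcal{B}_x,\ V\in\mathcal{B}_y\}$. This family has cardinality at most $|\mathcal{B}_x|\cdot|\mathcal{B}_y|$. Any neighbourhood of $(x,y)$ contains a basic neighbourhood $W_1\times W_2$, and we can pick $U\subseteq W_1$ and $V\subseteq W_2$ from the two $\pi$-bases, so $U\times V\subseteq W_1\times W_2$. Thus $\mathcal{B}$ is a $\pi$-basis at $(x,y)$.

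The cardinal product is harmless when one factor is infinite, since $\kappa\cdot\lambda=\kappa\vee\lambda$ whenever one of them is infinite. The degenerate finite cases are exactly those with an isolated point. If $x$ is isolated in $K$, then $\mathfrak{r}(K)=1$, and also $\mathfrak{r}(K\times L)=\mathfrak{r}(L)$. The upper bound then still holds, because $\mathfrak{r}(L)\leq\mathfrak{r}(L)^+$. This case check is the only mildly fiddly point; the substantive input is Theorem~\ref{theo:dsw}.
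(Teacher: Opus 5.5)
Your proof is correct and is the paper's argument: the same chain $\mathfrak{r}(K)\vee\mathfrak{r}(L)\leq\mathfrak{r}(K\times L)\leq\tilde{\mathfrak{r}}_\omega(K\times L)=\tilde{\mathfrak{r}}_\omega(K)\vee\tilde{\mathfrak{r}}_\omega(L)\leq\mathfrak{r}(K)^+\vee\mathfrak{r}(L)^+$, where you also prove the $\leq$ half of the middle equality, which the paper leaves as a trivial exercise. Your separate isolated-point case is harmless but unnecessary: in a Hausdorff space a point with finite $\pi$-character is isolated and has $\pi$-character $1$, so $\pi_\chi(x)\cdot\pi_\chi(y)=\pi_\chi(x)\vee\pi_\chi(y)$ always holds.
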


\begin{proof} Just concatenate some inequalities,
    \[\mathfrak{r}(K)\vee \mathfrak{r}(L) \leq \mathfrak{r}(K\times L) \leq \mathfrak{r}_\omega(K\times L) = \mathfrak{r}_\omega(K)\vee\mathfrak{r}_\omega(L) \leq \mathfrak{r}(K)^+\vee \mathfrak{r}(L)^+.\qedhere\]
\end{proof}

Since $C(K\times L)$ is isometric to the injective tensor product $C(K)\iten C(L)$, we have the following corollary combining Proposition~\ref{prop:product+} and Theorem~\ref{theo:CK}:

\begin{corollary}
    Let $K$ and $L$ be compact spaces and $\kappa$ an infinite cardinal. 
    \begin{enumerate}
     \item If either $C(K)$ or $C(L)$ has the $\kappa$-Daugavet property, then $C(K)\iten C(L)$ has the $\kappa$-Daugavet property.
    \item Assume that $K$ and $L$ are totally disconnected. If $C(K)\iten C(L)$ has the $\kappa$-Daugavet property, then either $C(K)$ or $C(L)$ must have the $\kappa^+$-Daugavet property.
    \end{enumerate}
\end{corollary}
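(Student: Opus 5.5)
The plan is to translate everything into reaping numbers and then read off the inequalities of Proposition~\ref{prop:product+}. Three ingredients are used.

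\textbf{Step 1: $\mathfrak{Dau}$ equals the reaping number for every compact space.} By Theorem~\ref{theo:CK}, $\mathfrak{Dau}(C(M))=\mathfrak{r}(M)$ for every compact space $M$. This holds both in the generic case and in the exceptional case where $M$ has a $G_\delta$-point but no isolated point, since there $\mathfrak{Dau}(C(M))=\omega=\mathfrak{r}(M)$. Hence, for an infinite cardinal $\kappa$, $C(M)$ has the $\kappa$-Daugavet property if and only if $\kappa<\mathfrak{r}(M)$.

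\textbf{Step 2: the tensor product is a $C(K\times L)$ space.} We use the isometry $C(K)\iten C(L)\equiv C(K\times L)$. Combined with Step 1, $C(K)\iten C(L)$ has the $\kappa$-Daugavet property if and only if $\kappa<\mathfrak{r}(K\times L)$.

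\textbf{Step 3: the lower bound $\mathfrak{r}(K)\vee\mathfrak{r}(L)\leq\mathfrak{r}(K\times L)$ for arbitrary compact $K$, $L$.} This is the remark after Proposition~\ref{reapingofproduct}, whose proof works for finite products and uses no total disconnectedness. The argument is a projection one. Take a reaping family of basic boxes $U\times V$ in $K\times L$. The $K$-factors form a reaping family for $K$: if some $g\in C(K)$ took both values $0$ and $1$ on every such factor, then $g\circ\pi_K$ would take both values on every box. The same argument gives the bound for $L$.

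\textbf{Proof of (1).} Suppose, say, $C(K)$ has the $\kappa$-Daugavet property. By Step 1, $\kappa<\mathfrak{r}(K)$. By Step 3, $\mathfrak{r}(K)\leq\mathfrak{r}(K\times L)$, so $\kappa<\mathfrak{r}(K\times L)$. By Step 2, $C(K)\iten C(L)$ has the $\kappa$-Daugavet property.

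\textbf{Proof of (2).} Now assume $K$ and $L$ are totally disconnected and $C(K)\iten C(L)$ has the $\kappa$-Daugavet property, so that $\kappa<\mathfrak{r}(K\times L)$ by Step 2. The upper bound of Proposition~\ref{prop:product+} comes from the chain
\[
\mathfrak{r}(K\times L)\leq\tilde{\mathfrak{r}}_\omega(K\times L)=\tilde{\mathfrak{r}}_\omega(K)\vee\tilde{\mathfrak{r}}_\omega(L)\leq\mathfrak{r}(K)^+\vee\mathfrak{r}(L)^+ .
\]
The first inequality is Proposition~\ref{prop:picharacter}. The equality holds because a $\pi$-basis at $(x,y)$ can be built from products of $\pi$-bases at $x$ and at $y$, and conversely projects onto them. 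The last inequality is Theorem~\ref{theo:dsw}, and this is the only place where total disconnectedness enters.

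Without loss of generality $\tilde{\mathfrak{r}}_\omega(K)\geq\tilde{\mathfrak{r}}_\omega(L)$. The chain then gives $\kappa<\tilde{\mathfrak{r}}_\omega(K)\leq\mathfrak{r}(K)^+$, so the factor $K$ is the one to which the conclusion is attached. The conclusion must then be phrased in terms of Step 1: the Daugavet index $\mathfrak{Dau}(C(K))=\mathfrak{r}(K)$ satisfies $\kappa<\mathfrak{Dau}(C(K))^+$. This is exactly the index form $\mathfrak{Dau}(C(K)\iten C(L))\leq\mathfrak{Dau}(C(K))^+\vee\mathfrak{Dau}(C(L))^+$ stated in the Introduction, and it is the form in which I would record part (2).

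\textbf{Main obstacle.} The delicate step is this final conversion of the successor bound back into a property of $C(K)$. The chain only yields $\kappa<\mathfrak{r}(K)^+$, i.e.\ $\kappa\leq\mathfrak{r}(K)$, which by Step 1 says that $C(K)$ has the $\lambda$-Daugavet property for every $\lambda<\kappa$. It does not literally give $\kappa^+<\mathfrak{r}(K)$, which is what "$C(K)$ has the $\kappa^+$-Daugavet property" means under Definition~\ref{def:daugaprop}.

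I see no route to that stronger inequality. It would require $\mathfrak{r}(K\times L)\leq\mathfrak{r}(K)\vee\mathfrak{r}(L)$ up to a cardinal predecessor, whereas Proposition~\ref{prop:product+} only controls $\mathfrak{r}(K\times L)$ up to a successor. In fact the literal wording looks doubtful. Take $K=L$ totally disconnected with $\mathfrak{r}(K)=\kappa^+$ (for example $K=\{0,1\}^{\kappa^+}$, using $\mathfrak{r}(\{0,1\}^{\lambda})=\lambda$ from \cite[Example 12]{monk}). Then $K\times K\cong K$, so $C(K)\iten C(K)$ has the $\kappa$-Daugavet property, while $C(K)$ fails the $\kappa^+$-Daugavet property. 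So I would not try to force the literal $\kappa^+$ reading, and would state part (2) in the index form above.
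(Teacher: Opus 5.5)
Your argument is the paper's own. The paper justifies the corollary in one line: identify $C(K)\iten C(L)$ with $C(K\times L)$, use $\mathfrak{Dau}(C(M))=\mathfrak{r}(M)$ from Theorem~\ref{theo:CK}, and apply the two inequalities of Proposition~\ref{prop:product+}. Your proof of (1) is complete and correct. You were right to use the lower bound $\mathfrak{r}(K\times L)\geq\mathfrak{r}(K)\vee\mathfrak{r}(L)$ in the form valid for all compact spaces, namely the remark after Proposition~\ref{reapingofproduct}. Proposition~\ref{prop:product+} itself is stated only for totally disconnected $K$, $L$.

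Your objection to (2) is also correct, and it is a flaw in the statement, not in your argument. The combination the paper invokes only gives $\kappa<\mathfrak{r}(K)^+\vee\mathfrak{r}(L)^+$, i.e.\ $\kappa\leq\mathfrak{Dau}(C(K))\vee\mathfrak{Dau}(C(L))$. The literal claim is false, and your example $K=L=\{0,1\}^{\kappa^+}$ is valid.

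An even simpler counterexample: take $K=\{0,1\}^{\omega_1}$, $L$ a single point and $\kappa=\omega$. Then $C(K)\iten C(L)\equiv C(K)$ has $\mathfrak{Dau}=\mathfrak{r}(K)=\omega_1$, so it has the $\omega$-Daugavet property. Yet neither $C(K)$ nor $C(L)=\R$ has the $\omega_1$-Daugavet property. More generally, with $L$ a point, (2) would force $\mathfrak{r}(K)$ never to be the successor of an infinite cardinal.

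The successor is on the wrong side, both here and in the corresponding sentence of the Introduction. The index inequality $\mathfrak{Dau}(C(K)\iten C(L))\leq\mathfrak{Dau}(C(K))^+\vee\mathfrak{Dau}(C(L))^+$ translates into: if $C(K)\iten C(L)$ has the $\kappa^+$-Daugavet property, then $C(K)$ or $C(L)$ has the $\kappa$-Daugavet property. This follows at once from your chain, since $\kappa^+<\mathfrak{r}(K)^+$ gives $\kappa^+\leq\mathfrak{r}(K)$, hence $\kappa<\mathfrak{r}(K)$. That is the form in which (2) should be recorded.
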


In the classical setting, the Daugavet property of $C(K)\iten C(L)$ is equivalent to either $C(K)$ or $C(L)$ having the Daugavet property. And the same holds for the perfect Daugavet property. This is an application of  Theorem~\ref{theo:CKDaugavet}, because it is clear that $K\times L$ has an isolated point (respectively a $G_\delta$-point) if and only if both $K$ and $L$ have an isolated point (respectively a $G_\delta$-point).

We do not know if Theorem~\ref{theo:dsw} and Corollary~\ref{cor:picharacter}(2) hold without the assumption of $K$ being totally disconnected. We went carefully through the proof of Theorem~\ref{theo:dsw} in \cite{dsw} in the hope of a straightforward generalisation but we found an essential difficulty, that we are going to explain now. The key is the consideration of the following auxiliary cardinals.

\begin{definition}
    For a Boolean algebra $\mathbb{B}$ and $n\geq k$ natural numbers, the $n,k$-reaping number $\mathfrak{r}_{n,k}(\mathbb{B})$ is defined as the least cardinality of a family $\mathcal{A}$ of nonzero elements of $\mathbb{B}$ such that for every $n$-partition $p_1,\ldots,p_n\in \mathbb{B}$ there exists $a\in \mathcal{A}$ that intersects less than $k$ many of the $p_i$'s.
\end{definition}

These cardinals generalise the reaping number in the sense that $\mathfrak{r}(\mathbb{B}) = \mathfrak{r}_{2,2}(\mathbb{B})$. A result of Laflamme~\cite{laflamme} says that $\mathfrak{r}_{n,k}(K) = \mathfrak{r}_{\lceil \frac{n}{k-1}\rceil,2}(K)$ which allows to reduce the study cardinals $\mathfrak{r}_{n,2}$, 
$$\mathfrak{r}(\mathbb{B}) = \mathfrak{r}_{2,2}(\mathbb{B}) \leq \mathfrak{r}_{3,2}(\mathbb{B})\leq \mathfrak{r}_{4,2}(\mathbb{B}) \leq \cdots$$

The proof of Theorem~\ref{theo:dsw} follows from putting together two facts:

\begin{lemma}\label{twostepscardinals} Let $K$ be a totally disconnected compact space and $\mathbb{B}$ the algebra of clopen sets of $K$.
\begin{enumerate}
    \item $\sup_{n<\omega}\mathfrak{r}_{n,2}(\mathbb{B}) = \min\{\pi_\chi(x) \colon  x\in K\}$, cf. \cite[Proposition 1.6]{bs1}, \cite[Proposition 1.1]{dsw}. 
    \item $\mathfrak{r}_{n,2}(\mathbb{B})\leq \mathfrak{r}(\mathbb{B})^+$ for every $n<\omega$, cf. \cite[Corollary 3.2]{dsw}.
\end{enumerate}
\end{lemma}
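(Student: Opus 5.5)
Part~(1) is a compactness argument, and I would prove its two inequalities separately. For $\sup_{n}\mathfrak{r}_{n,2}(\mathbb{B})\leq \min\{\pi_\chi(x)\colon x\in K\}$, fix $x\in K$ and a $\pi$-basis $\mathcal{B}$ of $x$. By shrinking each member we may assume it consists of nonempty clopens, since $K$ is totally disconnected. Given any $n$-partition $p_1,\ldots,p_n$ of $\mathbb{B}$, the point $x$ lies in some $p_i$. This $p_i$ is a clopen neighbourhood of $x$, so it contains some $a\in\mathcal{B}$, and that $a$ meets only one piece. Hence $\mathcal{B}$ witnesses $\mathfrak{r}_{n,2}(\mathbb{B})\leq|\mathcal{B}|$ for every $n$.

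For the reverse inequality, let $\lambda=\sup_n\mathfrak{r}_{n,2}(\mathbb{B})$. For each $n$ choose a witnessing family $\mathcal{A}_n$ of size $\mathfrak{r}_{n,2}(\mathbb{B})$, and put $\mathcal{A}=\bigcup_n\mathcal{A}_n$, so $|\mathcal{A}|\leq\lambda$. I claim some point has a $\pi$-basis contained in $\mathcal{A}$. If not, every $x$ has a clopen neighbourhood $U_x$ containing no member of $\mathcal{A}$. By compactness finitely many $U_{x_1},\ldots,U_{x_m}$ cover $K$. Disjointifying them gives a partition $p_1,\ldots,p_k$ into nonzero clopens with $k\leq m$, each $p_j$ contained in some $U_{x_i}$. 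The family $\mathcal{A}_k$ yields $a$ meeting only one $p_j$, so $a\subseteq p_j\subseteq U_{x_i}$, a contradiction. The case $k<m$ is harmless: use $\mathcal{A}_k$, or note that $\mathfrak{r}_{n,2}$ is monotone in $n$ by splitting a piece. So $\min_x\pi_\chi(x)\leq\lambda$.

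Part~(2) is the real content (Dow--Stepr\={a}ns--Watson), and I would argue by induction on $n$. Let $\kappa=\mathfrak{r}(\mathbb{B})$. A first observation is that $\mathfrak{r}(\mathbb{B})\leq\mathfrak{r}(\mathbb{B}\restriction a)$ for every nonzero $a$: a reaping family of the relative algebra $\mathbb{B}\restriction a$ also reaps every $2$-partition of $\mathbb{B}$ after restricting the partition to $a$. The plan is to build an increasing chain $\{\mathcal{A}_\alpha\colon\alpha<\kappa^+\}$ of subsets of $\mathbb{B}\setminus\{0\}$, each of size $\leq\kappa$, and take $\mathcal{A}=\bigcup_\alpha\mathcal{A}_\alpha$, which has size $\leq\kappa^+$. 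Each $\mathcal{A}_{\alpha+1}$ should contain, for every $a\in\mathcal{A}_\alpha$, a reaping family of $\mathbb{B}\restriction a$, and at limits we take unions. We then show that $\mathcal{A}$ is $(n+1,2)$-reaping. Given an $(n+1)$-partition $p_0,\ldots,p_n$, merge two pieces to obtain an $n$-partition and apply the inductive hypothesis inside the chain to find $a$ meeting only $p_0\cup p_1$ (after relabelling). Then $\{a\wedge p_0,a\wedge p_1\}$ is a $2$-partition of $\mathbb{B}\restriction a$, and a relative reaping family for $a$ added at the next stage supplies an element meeting only one $p_i$.

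The main obstacle is that $\mathfrak{r}(\mathbb{B}\restriction a)$ may exceed $\kappa$, so the relative reaping families need not fit into sets of size $\kappa$. The inductive hypothesis must also be applied cofinally along the chain, which requires closing each $\mathcal{A}_\alpha$ under an appropriate diagonalisation (for instance, taking $\mathcal{A}_\alpha=M_\alpha\cap\mathbb{B}$ for a continuous chain of elementary submodels of size $\kappa$). To handle elements $a$ with $\mathfrak{r}(\mathbb{B}\restriction a)>\kappa$, I would first use a fixed reaping family $\mathcal{R}$ of size $\kappa$, together with the fact that below such an $a$ there is a $2$-partition splitting all of $\mathcal{R}\restriction a$. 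From this I would argue that the set of $a$ with $\mathfrak{r}(\mathbb{B}\restriction a)=\kappa$ is dense, and restrict the construction to that dense set. This density step is where I expect the genuine combinatorics of \cite{dsw} to be needed, and I would follow their Corollary~3.2 there rather than reprove it from scratch.
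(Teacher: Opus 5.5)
Your part (1) is correct and is the compactness argument the paper gives in Proposition~\ref{twostepscardinalsnew}(1). Your finite-subcover contradiction is the contrapositive of the paper's finite-intersection argument, which uses the closed sets meeting every member of $\bigcup_n\mathcal{F}_n$.

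Part (2), however, has a genuine gap, and the proposed repair cannot work. Merging two pieces and then reaping inside $\mathbb{B}\restriction a$ only gives $\mathfrak{r}_{n+1,2}(\mathbb{B})\leq\mathfrak{r}_{n,2}(\mathbb{B})\cdot\sup_a\mathfrak{r}(\mathbb{B}\restriction a)$. The length-$\kappa^+$ chain does no work in your sketch: closing a family under ``add a reaping family of $\mathbb{B}\restriction a$'' takes only $\omega$ steps, so everything hinges on the size of those relative families. The density statement you want to reduce to fails precisely when (2) has content. Suppose $D=\{a\colon\mathfrak{r}(\mathbb{B}\restriction a)=\kappa\}$ were dense below some nonzero $a_0$. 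Then $\mathfrak{r}(\mathbb{B}\restriction a_0)=\kappa$. Shrinking an $(n,2)$-reaping family of $\mathbb{B}\restriction a_0$ into $D$ and adding relative reaping families of size $\kappa$ would give $\mathfrak{r}_{n,2}(\mathbb{B}\restriction a_0)=\kappa$ for all $n$. By (1), some point of the clopen set $a_0$ would then have $\pi$-character $\kappa$, i.e.\ $\min_x\pi_\chi(x)=\mathfrak{r}(K)$. So whenever $\min_x\pi_\chi(x)>\mathfrak{r}(K)$ (the only case in which (2) says anything), $D$ is nowhere dense, and the Balcar--Simon example \cite{bs2} shows this case occurs. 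Appealing to \cite[Corollary 3.2]{dsw} for this step is appealing to the very statement being proved.

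The missing idea is the Dow--Stepr\={a}ns--Watson partition argument, which the paper reproduces in the proof of Proposition~\ref{twostepscardinalsnew}(2); recall $\ddot{\mathfrak{r}}_{n,k}(K)=\mathfrak{r}_{n,k}(\mathbb{B})$ here. The key claim is: if $h\colon\{1,\ldots,n\}\times\{1,\ldots,m\}\to\{1,\ldots,k\}$ takes more than $q$ values on every $i\times j$ rectangle and $\mathfrak{r}_{n,i}(\mathbb{B})>\lambda:=\mathfrak{r}_{k,q+1}(\mathbb{B})$, then $\mathfrak{r}_{m,j}(\mathbb{B})\leq\lambda^+$. The proof runs as follows.
\begin{enumerate}
\item Fix a witness $\mathcal{A}$ of size $\lambda$ for $\mathfrak{r}_{k,q+1}$. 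Recursively, for $\alpha<\lambda^+$, choose an $n$-partition $f_\alpha$ such that every member of $\mathcal{B}_\alpha$ meets at least $i$ of its pieces. Here $\mathcal{B}_0=\mathcal{A}$, and $\mathcal{B}_{\alpha+1}$ adds all nonzero meets of a member of $\mathcal{A}$ with one piece each of at most $i$ of the $f_\beta$, $\beta\leq\alpha$. Thus $|\mathcal{B}_\alpha|\leq\lambda$.
\item Assuming $\mathfrak{r}_{m,j}>\lambda^+$, choose an $m$-partition $f$ that every member of $\mathcal{B}_{\lambda^+}$ meets in at least $j$ pieces.
\item Each $k$-partition $h\circ(f_\alpha\times f)$ has some $W_\alpha\in\mathcal{A}$ meeting at most $q$ of its pieces. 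Pigeonhole over $\lambda^+$ indices yields $\alpha_1<\cdots<\alpha_i$ with the same $W$ and the same set $E$ of pieces met.
\item The closure built into the $\mathcal{B}_\alpha$ lets you choose, one at a time, distinct $I_1,\ldots,I_i$ with $W\wedge\bigwedge_\eta f_{\alpha_\eta}^{-1}(I_\eta)\neq0$. Then choose $j$ values $J_1,\ldots,J_j$ of $f$ on that meet.
\item Now $h$ takes only values in $E$ on $\{I_1,\ldots,I_i\}\times\{J_1,\ldots,J_j\}$, a contradiction.
\end{enumerate}
To finish, reduce to $n$ prime and take $q<n$ maximal with $\mathfrak{r}_{n,q}>\mathfrak{r}$, using $\mathfrak{r}_{n,n}\leq\mathfrak{r}$. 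Apply the claim to $h(u,v)=u+v \bmod n$. It takes at least $q+1$ values on every $q\times 2$ rectangle, because a proper nonempty subset of $\mathbb{Z}_n$ is not invariant under a nonzero translation. This gives $\mathfrak{r}_{n,2}\leq\mathfrak{r}_{n,q+1}^+\leq\mathfrak{r}^+$.
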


When trying to export this scheme to general compact spaces, there are two different ways in which we can define these auxiliary cardinals.

\begin{definition}
	For a compact space $K$ and integers $n\geq k$ the cover-reaping number $\tilde{\mathfrak{r}}_{n,k}(K)$ is the least cardinal of a family $\mathcal{F}$ of nonempty open subsets of $K$ such that for every open cover of $K$ with $n$ many open sets, there exists $W\in \mathcal{F}$ that is contained in the union of less that $k$ many open sets from the cover.
\end{definition}

\begin{definition}
	For a compact space $K$ and integers $n\geq k$ the separation-reaping number $\ddot{\mathfrak{r}}_{n,k}(K)$ is the least cardinal of a family $\mathcal{F}$ of nonempty open subsets of $K$ such that for every pairwise disjoint closed subsets $L_1,\ldots,L_n$ of $K$ there exists $W\in\mathcal{F}$ that has nonempty intersection with less than $k$ many of the $L_i$'s.
\end{definition}

When $n=k=2$, Proposition~\ref{reapingfamily} means that $\ddot{\mathfrak{r}}_{2,2}(K) = \tilde{\mathfrak{r}}_{2,2}(K) = \mathfrak{r}(K)$. We do not have any example where $\ddot{\mathfrak{r}}_{n,k}(K) \neq \tilde{\mathfrak{r}}_{n,k}(K)$, we can only prove the obvious inequality $\ddot{\mathfrak{r}}_{n,k}(K) \leq \tilde{\mathfrak{r}}_{n,k}(K)$. By Tietze's extension theorem, the condition (3) of Proposition~\ref{reapingfamily} corresponds to the separation-reaping number.

\begin{proposition}
	$\ddot{\mathfrak{r}}_{n,k}(K)$ is the least cardinality of a family $\mathcal{F}$ of nonempty open subsets of $K$ such that for every continuous function $f\colon K\longrightarrow [1,n]$ there exists $W\in \mathcal{F}$ on which $f$ takes less than $k$ many integer values.
\end{proposition}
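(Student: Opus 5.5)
The plan is to show that, for a fixed family $\mathcal{F}$ of nonempty open sets, the two conditions are equivalent. The first condition is the defining one: for every $n$ pairwise disjoint closed sets $L_1,\ldots,L_n$ some $W\in\mathcal{F}$ meets fewer than $k$ of them. The second is the functional one: for every continuous $f\colon K\to[1,n]$ some $W\in\mathcal{F}$ on which $f$ takes fewer than $k$ of the values $1,\ldots,n$. Once the two conditions coincide family by family, the two least cardinalities coincide as well. This mirrors the equivalence of (1) and (3) in Proposition~\ref{reapingfamily}, with Urysohn's lemma replaced by Tietze's extension theorem.

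First, assume $\mathcal{F}$ satisfies the separation condition and let $f\colon K\to[1,n]$ be continuous. Set $L_i=f^{-1}(\{i\})$ for $i=1,\ldots,n$. These are closed, and they are pairwise disjoint because $f$ is a function. Some $W\in\mathcal{F}$ meets fewer than $k$ of the $L_i$. The integer values that $f$ takes on $W$ are exactly those $i$ with $W\cap L_i\neq\emptyset$, so $f$ takes fewer than $k$ integer values on $W$.

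Conversely, assume $\mathcal{F}$ satisfies the functional condition and let $L_1,\ldots,L_n$ be pairwise disjoint closed sets. Their union $L$ is closed, and the map $g\colon L\to[1,n]$ with $g|_{L_i}\equiv i$ is continuous because the $L_i$ are disjoint closed sets, hence clopen in $L$. Since $K$ is compact Hausdorff and therefore normal, Tietze's extension theorem gives a continuous extension $f\colon K\to[1,n]$; truncating by $\max(1,\min(n,\cdot))$ keeps the range inside $[1,n]$. By hypothesis some $W\in\mathcal{F}$ has fewer than $k$ integer values in $f(W)$. Every $L_i$ that $W$ meets contributes the value $i$ to $f(W)$, so $W$ meets fewer than $k$ of the $L_i$.

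The first implication has a subtlety that I expect to be the main, though mild, obstacle. The extension $f$ may take integer values outside $\bigcup_i L_i$, so the level set $f^{-1}(\{i\})$ can be strictly larger than $L_i$. This is harmless for the direction just given, since there we only need the inclusion $L_i\subseteq f^{-1}(\{i\})$. In the direction that starts from $f$ there is no issue either, because we chose $L_i$ to be exactly the level sets. Combining the two implications shows the two families of conditions coincide, which proves the statement.
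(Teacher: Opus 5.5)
Your proof is correct and follows the same route as the paper. The paper only states ``By Tietze's extension theorem'' and leaves the details implicit; your family-by-family equivalence fills them in, using the level sets $f^{-1}(\{i\})$ in one direction and a Tietze extension of the locally constant function equal to $i$ on $L_i$ in the other. One small expository slip: the ``subtlety'' in your last paragraph belongs to the Tietze direction (functional condition implies separation condition), not the first implication. There, as you say, the inclusion $L_i\subseteq f^{-1}(\{i\})$ is all that is needed.
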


Both variants of reaping numbers generalise the original Boolean definitions.

\begin{proposition}
	If $K$ is totally disconnected and $\mathbb{B}$ is the algebra of the clopen subsets of $K$, then $\tilde{\mathfrak{r}}_{n,k}(K) = \ddot{\mathfrak{r}}_{n,k}(K) = {\mathfrak{r}}_{n,k}(\mathbb{B})$.
\end{proposition}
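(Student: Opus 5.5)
We establish the chain $\mathfrak{r}_{n,k}(\mathbb{B})\leq \ddot{\mathfrak{r}}_{n,k}(K)\leq \tilde{\mathfrak{r}}_{n,k}(K)\leq \mathfrak{r}_{n,k}(\mathbb{B})$. The middle inequality is the obvious one noted above: if $\mathcal{F}$ witnesses the cover-reaping condition and $L_1,\ldots,L_n$ are pairwise disjoint closed sets, then $V_i:=K\setminus\bigcup_{j\neq i}L_j$ form an open cover. Some $W\in\mathcal{F}$ lies in a union of fewer than $k$ of the $V_i$'s, say $W\subseteq\bigcup_{i\in I}V_i$ with $|I|<k$. If $W$ met $L_j$ for some $j\notin I$, a point of $W\cap L_j$ would lie in no $V_i$ with $i\neq j$, and hence in no $V_i$ with $i\in I$. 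This contradiction shows that $W$ meets only the $L_i$ with $i\in I$, so it meets fewer than $k$ of them.

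For $\tilde{\mathfrak{r}}_{n,k}(K)\leq \mathfrak{r}_{n,k}(\mathbb{B})$, take a family $\mathcal{A}$ of nonempty clopens witnessing $\mathfrak{r}_{n,k}(\mathbb{B})$. Given an open cover $V_1,\ldots,V_n$ of $K$, we refine it to a clopen partition $p_1,\ldots,p_n$ with $p_i\subseteq V_i$ (some $p_i$ possibly empty). This uses compactness and total disconnectedness: each point has a clopen neighbourhood inside some $V_i$, a finite subcover is extracted, and it is disjointified and grouped by index. Padding with empty pieces is harmless, since $\mathfrak{r}_{n,k}$ counts how many pieces $a$ intersects. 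Some $a\in\mathcal{A}$ then meets fewer than $k$ of the $p_i$, and is therefore contained in the union of the corresponding fewer than $k$ sets $V_i$. If the Boolean definition insists on nonzero parts, I would remark that a partition with zero parts is handled by merging or splitting appropriately, or more simply that the definition applied to $n$-element partitions allowing $0$ is equivalent. This is the one technical point to check.

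For $\mathfrak{r}_{n,k}(\mathbb{B})\leq \ddot{\mathfrak{r}}_{n,k}(K)$, take $\mathcal{F}$ witnessing $\ddot{\mathfrak{r}}_{n,k}(K)$ and replace each $W\in\mathcal{F}$ by a nonempty clopen $A_W\subseteq W$, as in the proof of Proposition~\ref{prop:0dim}. Given a clopen partition $p_1,\ldots,p_n$, the sets $p_i$ are pairwise disjoint closed sets. Hence some $W$ meets fewer than $k$ of them, and so does $A_W\subseteq W$. Thus $\{A_W\}$ is an $n,k$-reaping family in $\mathbb{B}$ of cardinality at most $|\mathcal{F}|$.

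I expect the main obstacle to be only the bookkeeping in the refinement step, specifically the handling of empty parts. The rest is a direct adaptation of the proof of Proposition~\ref{prop:0dim}.
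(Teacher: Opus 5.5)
Your proof is correct, but it is organised differently from the paper's. The paper proves the two equalities separately. For $\ddot{\mathfrak{r}}_{n,k}(K)=\mathfrak{r}_{n,k}(\mathbb{B})$ it enlarges pairwise disjoint closed sets $L_1,\ldots,L_n$ to a clopen partition $L'_1,\ldots,L'_n$ with $L_i\subseteq L'_i$. For $\tilde{\mathfrak{r}}_{n,k}(K)=\mathfrak{r}_{n,k}(\mathbb{B})$ it refines an $n$-cover to a subordinate clopen partition. In both cases the family is taken to be clopen, so each definition literally becomes the Boolean one.

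You instead close the cycle $\mathfrak{r}_{n,k}(\mathbb{B})\leq\ddot{\mathfrak{r}}_{n,k}(K)\leq\tilde{\mathfrak{r}}_{n,k}(K)\leq\mathfrak{r}_{n,k}(\mathbb{B})$. Its middle link holds in every compact space, and via $V_i=K\setminus\bigcup_{j\neq i}L_j$ you supply the proof of an inequality the paper only calls obvious. This is more economical: total disconnectedness enters only through shrinking open sets to clopens and refining covers, and the extension of disjoint closed sets to a clopen partition is never needed. The paper's route, in exchange, makes transparent that in the zero-dimensional case both topological variants are the Boolean definition in disguise.

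On the point you flagged: if $n$-partitions had to have nonzero parts, the remedy is splitting, not merging. When $|K|\geq n$, split nonempty pieces until all $n$ pieces are nonempty; a set meeting fewer than $k$ pieces of a refinement meets fewer than $k$ pieces of the original partition. When $|K|<n$, that convention would give $\mathfrak{r}_{n,k}(\mathbb{B})=0$ while $\tilde{\mathfrak{r}}_{n,k}(K)\geq 1$. So the convention allowing zero parts, which the paper's proof tacitly uses, is the right one.
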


\begin{proof}
The definition of ${\mathfrak{r}}_{n,k}(\mathbb{B})$ is the same as the definition we gave for $\ddot{\mathfrak{r}}_{n,k}(K)$ except that $L_1,\ldots,L_n$ is a clopen partition, rather than just pairwise disjoint closed sets. But when $K$ is totally disconnected that makes no difference because if our family $\mathcal{F}$ has that property for clopen partitions and we take $L_1,\ldots,L_n$ pairwise disjoint closed sets, we can find a clopen partition $L'_1,\ldots,L'_n$ with $L_i\subseteq L'_i$. So indeed $\ddot{\mathfrak{r}}_{n,k}(K) = {\mathfrak{r}}_{n,k}(\mathbb{B})$. We focus now on $\tilde{\mathfrak{r}}_{n,k}(K)$. The cover in its definition can be supposed to be a partition into clopen sets. This is because if property stated for $\mathcal{F}$ holds for such partitions, and $K=\bigcup_{i=1}^n V_i$ is an open cover, then we can find a clopen partition $K = \bigcup_{i=1}^n\tilde{V}_i$ with $\tilde{V}_i \subseteq V_i$ (this is an exercise in topology, choose a clopen neighborhood of every point of $K$ contained in some $V_i$, then use compactness to find a finite subcover and then group the clopen pieces into blocks contained in each $V_i$). Also, the family $\mathcal{F}$ can be supposed to be made of clopen sets. Thus, calling $\mathcal{P}_n$ the set of all $n$-partitions of $\mathbb{B}$,
\begin{align*}
\tilde{\mathfrak{r}}_{n,k}(K) &= \min\left\{|\mathcal{A}| \colon  \mathcal{A}\subset \mathbb{B}\setminus\{0\}, \forall \{p_1,\ldots,p_n\}\in\mathcal{P}_n \ \exists a\in\mathcal{A}\ |\{i \colon  a\cap p_i\neq \emptyset\}|<k \right\} \\ & = \min\left\{|\mathcal{A}| \colon  \mathcal{A}\subset \mathbb{B}\setminus\{0\}, \not\exists \{p_1,\ldots,p_n\}\in\mathcal{P}_n \ \forall a\in\mathcal{A}\ |\{i \colon  a\cap p_i\neq \emptyset\}|\geq k \right\}=\mathfrak{r}_{n,k}(\mathbb{B}).\qedhere
\end{align*}
\end{proof}

We have the following picture

$$\begin{array}{ccccccccc}
   \mathfrak{r}(K) & \leq & \tilde{\mathfrak{r}}_{3,2}(K) & \leq & \tilde{\mathfrak{r}}_{4,2}(K) &  \leq & \tilde{\mathfrak{r}}_{5,2}(K) & \leq & \cdots \\
   \mathrel{\rotatebox{90}{$=$}} & & \mathrel{\rotatebox{90}{$\leq$}} & & \mathrel{\rotatebox{90}{$\leq$}} & & \mathrel{\rotatebox{90}{$\leq$}} & &\\
     \mathfrak{r}(K)  & \leq & \ddot{\mathfrak{r}}_{3,2}(K)  & \leq & \ddot{\mathfrak{r}}_{4,2}(K) & \leq & \ddot{\mathfrak{r}}_{5,2}(K) & \leq & \cdots
\end{array}$$

When we analysed the proofs of the two statements of Lemma~\ref{twostepscardinals}, one of them seems to work fine only for cover-reaping numbers, and the other only for separation-reaping numbers.
Unfortunately, the two statements of Proposition~\ref{twostepscardinalsnew} below do not match to get the desired conclusion, because we do not know if the equality $\ddot{r}_{n,k}(K) = \tilde{r}_{n,k}(K)$ holds in general.
\begin{proposition}\label{twostepscardinalsnew} Let $K$ be a compact space.
\begin{enumerate}
    \item $\sup\limits_{n<\omega}\tilde{\mathfrak{r}}_{n,2}(K) = \min\{\pi_\chi(x) \colon x\in K\}$.
    \item $\ddot{\mathfrak{r}}_{n,2}(K)\leq \mathfrak{r}(K)^+$ for every $n<\omega$.
\end{enumerate}
\end{proposition}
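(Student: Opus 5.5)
For part (1), I would prove the two inequalities separately. For $\leq$, fix $x\in K$ and a $\pi$-basis $\mathcal{B}$ of $x$. Given an open cover $K=V_1\cup\dots\cup V_n$, the point $x$ lies in some $V_i$, so $V_i$ is a neighbourhood of $x$ and contains a member of $\mathcal{B}$. That member is contained in the union of one set of the cover. Hence $\tilde{\mathfrak{r}}_{n,2}(K)\leq \pi_\chi(x)$ for every $n$ and every $x$, exactly as in Proposition~\ref{prop:picharacter}.

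For $\geq$, let $\kappa=\sup_n\tilde{\mathfrak{r}}_{n,2}(K)$ and, for each $n$, choose a witnessing family $\mathcal{F}_n$ with $|\mathcal{F}_n|=\tilde{\mathfrak{r}}_{n,2}(K)$. Put $\mathcal{F}=\bigcup_n\mathcal{F}_n$, so that $|\mathcal{F}|\leq\kappa$ when $\kappa$ is infinite. I claim $\mathcal{F}$ is a $\pi$-basis of some point. If not, every $x\in K$ has an open neighbourhood $U_x$ containing no member of $\mathcal{F}$. By compactness, finitely many of them cover $K$, say $U_{x_1},\dots,U_{x_n}$. Then $\mathcal{F}_n$ yields some $W\subseteq U_{x_i}$ for some $i$, a contradiction. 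This gives $\min\pi_\chi\leq|\mathcal{F}|\leq\kappa$.

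The case of finite $\kappa$ needs separate care, since the counting above fails there. In that case already $\mathfrak{r}(K)=\tilde{\mathfrak{r}}_{2,2}(K)$ is finite. As in the proof of Theorem~\ref{theo:CK}, $K$ then has an isolated point, so $\min\pi_\chi=1$ and all the $\tilde{\mathfrak{r}}_{n,2}(K)$ equal $1$.

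For part (2), I would follow the scheme of \cite[Corollary 3.2]{dsw}, using the functional description of $\ddot{\mathfrak{r}}_{n,k}$: a family is $n$-separation-reaping if no continuous $f\colon K\to[1,n]$ takes at least two integer values on every member. Tietze's extension theorem is what allows local constructions on closed pieces to be glued into one function; this is exactly why the argument works for separation numbers and not for cover numbers. Let $\kappa=\mathfrak{r}(K)$ and fix a reaping family $\mathcal{R}$ of size $\kappa$. I would argue by induction on $n$, with base case $\ddot{\mathfrak{r}}_{2,2}(K)=\kappa$. The generalisation of the trick in Lemma~\ref{rnn} is the following. Given a family $\mathcal{F}$, restrict it to the traces $A\cap W$ with $W$ an open set whose closure misses one of two separating closed sets. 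Separations found inside $\overline{W}$ and outside it then combine into separations with more pieces.

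To reach $\kappa^+$ I would run a recursion of length $\kappa^+$. I build an increasing chain of families $\mathcal{F}_\xi$, each of size $\leq\kappa$, with $\mathcal{F}_{\xi+1}$ closed under taking traces with members of $\mathcal{R}$ and with the sets produced by the $n$-step induction. The candidate witness for $\ddot{\mathfrak{r}}_{n,2}(K)$ is $\bigcup_{\xi<\kappa^+}\mathcal{F}_\xi$, which has size $\kappa^+$. If some $f\colon K\to[1,n]$ defeated this union, I would use a closing-off argument: $\kappa^+$ is regular and each stage has size $\kappa$. This should give a stage $\xi$ where the restrictions of $f$ to the sets $\{f\leq i\}$, together with the family $\mathcal{F}_\xi$, contradict $\mathcal{R}$ being reaping, or contradict the inductive hypothesis at $n-1$.

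The main obstacle is precisely this diagonalisation step: choosing the correct closure conditions at successor stages so that a defeating $f$ yields a reaping failure below $\kappa^+$. Here I would follow the proof in \cite{dsw} line by line. The change is to replace clopen partitions by pairwise disjoint closed sets, and Boolean restriction by open traces combined with Tietze extensions.
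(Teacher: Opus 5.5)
Your part (1) is correct and is the paper's argument in contrapositive form. The paper shows that the closed sets meeting every member of $\mathcal{F}=\bigcup_n\mathcal{F}_n$ have the finite intersection property, while you extract a finite subcover from neighbourhoods containing no member of $\mathcal{F}$. Your separate handling of a finite supremum (via an isolated point) also covers a detail the paper leaves implicit.

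Part (2) has a genuine gap, and it sits exactly where you place your ``main obstacle''. Your outer skeleton is a recursion of length $\kappa^+$ through families of size $\kappa$, with the union as candidate witness. That is the shape of the paper's argument, but both ingredients that make it work are missing.

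First, the successor stages must diagonalise against \emph{functions}.
\begin{itemize}
\item The paper fixes a prime $n$ (enough, by monotonicity in $n$).
\item It uses the chain $\ddot{\mathfrak{r}}_{n,2}(K)\geq\cdots\geq\ddot{\mathfrak{r}}_{n,n}(K)\leq\mathfrak{r}(K)$ to take the largest $q$ with $\ddot{\mathfrak{r}}_{n,q}(K)>\mathfrak{r}(K)$.
\item It fixes a family $\mathcal{A}$ witnessing $\lambda:=\ddot{\mathfrak{r}}_{n,q+1}(K)\leq\mathfrak{r}(K)$.
\item At each stage $\alpha<\lambda^+$ it chooses $f_\alpha\in C(K,[1,n])$ taking at least $q$ integer values on every member of the current family. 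This is possible because that family is smaller than $\ddot{\mathfrak{r}}_{n,q}(K)$.
\item The next family consists of the nonempty traces $W\cap f_{\alpha_1}^{-1}(I_1-1/4,I_1+1/4)\cap\cdots\cap f_{\alpha_\eta}^{-1}(I_\eta-1/4,I_\eta+1/4)$ with $W\in\mathcal{A}$ and $\eta\leq q$.
\end{itemize}
Closing under traces with a reaping family $\mathcal{R}$ and unspecified ``sets produced by the induction'' gives nothing to diagonalise against.

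Second, the contradiction does not come from $\mathcal{R}$ ceasing to be reaping. It comes from a colouring: $h(u,v)=u+v \bmod n$ takes more than $q$ values on every $q\times 2$ rectangle of $\{1,\dots,n\}^2$. Tietze makes $h$ into a continuous $h'$ on $[1,n]^2$; this, not gluing local pieces, is where Tietze and the functional description enter. Suppose $f$ took two integer values on every member of the union.
\begin{itemize}
\item Each $c_\alpha=h'\circ(f_\alpha\times f)$ takes at most $q$ integer values on some $W_\alpha\in\mathcal{A}$.
\item A pigeonhole over the $\lambda^+$ indices gives $\alpha_1<\cdots<\alpha_q$ with a common $W$ and a common value set $E$, $|E|\leq q$. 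This pigeonhole, not a closing-off argument, is the real use of the successor cardinal.
\item The traces then produce a $q\times2$ rectangle on which $h$ takes values only in $E$, which is impossible.
\end{itemize}

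What you propose in place of this cannot be repaired by filling in details.
\begin{itemize}
\item An inductive hypothesis $\ddot{\mathfrak{r}}_{n-1,2}(K)\leq\kappa^+$ is only witnessed by families of size $\kappa^+$. So it is never contradicted at a stage of size $\kappa$.
\item A step of the form $\ddot{\mathfrak{r}}_{n,2}(K)\leq\ddot{\mathfrak{r}}_{n-1,2}(K)^+$ would stack successors up to $\mathfrak{r}(K)^{+(n-2)}$.
\item The trace trick of Lemma~\ref{rnn} only shows that small families can be split into more pieces, giving equalities like $\ddot{\mathfrak{r}}_{3,3}(K)=\mathfrak{r}(K)$. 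It never forces a member into a single piece, which is what an upper bound on $\ddot{\mathfrak{r}}_{n,2}(K)$ needs.
\end{itemize}
Saying you would follow \cite{dsw} line by line does not close the gap. The scheme you wrote down (induction on $n$, contradiction with reaping) is not theirs: they fix a prime $n$ and descend in the second index via the colouring lemma.
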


\begin{proof}
	For $[\leq$] in (1), observe that if $\mathcal{F}$ a $\pi$-basis of neighborhoods of some $x$, then $\mathcal{F}$ also satisfies the property in the definition of $\tilde{\mathfrak{r}}_{n,2}$ just by considering the member of the cover that contains $x$. For $[\geq]$ in (1),  consider $\mathcal{F}_n$ a family that witnesses the definition of $\tilde{\mathfrak{r}}_{n,2}$ for every $n$. We prove that $\mathcal{F}:=\bigcup_n \mathcal{F}_n$ is the $\pi$-basis of some $x\in K$. For this, consider $\mathcal{Z}$ the family of all closed subsets of $K$ that have nonempty intersection with every element of $\mathcal{F}$. We claim that $\mathcal{Z}$ has the finite intersection property. This is because if we have $Z_1,\ldots,Z_n\in\mathcal{Z}$ with $\bigcap_{i=1}^n Z_i = \emptyset$, then the sets $K\setminus Z_i$ form an open cover, thus we would have $W\in\mathcal{F}_n$ such that $W\subset K\setminus Z_i$ for some $i$, which contradicts that $Z_i$ cuts every $W\in\mathcal{F}$. We conclude that there exists $x\in\bigcap_{Z\in\mathcal{Z}} Z$. We have that $\mathcal{F}$ is a $\pi$-basis for $x$ because if $V$ is an open neighborhood of $x$, then $x\not\in K\setminus V$, so $K\setminus V\not\in\mathcal{Z}$, so there exists $W\in\mathcal{F}$ disjoint from $K\setminus V$. We found $W\in\mathcal{F}$ with $W\subseteq V$.

Following \cite{dsw}, for natural numbers $n,m,i,j,k,q$, the statement
$$\binom{m}{n} \not\rightarrow \binom{j}{i}^{1,1}_{k,q} $$
means that there is a function $h$ that takes $k$ values on a product set $X\times Y$ of size $n\times m$ such that $h$ takes more than $q$ values on every product set $A\times B$ of size $i\times j$. The following generalizes \cite[Lemma 3.3]{dsw} for general compact spaces.

\emph{Claim}. 
	Suppose that $$\binom{m}{n} \not\rightarrow \binom{j}{i}^{1,1}_{k,q}. $$ Then any compact space that satisfies $\ddot{\mathfrak{r}}_{n,i}(K) > \ddot{\mathfrak{r}}_{k,q+1}(K)$ also satisfies $\ddot{\mathfrak{r}}_{m,j}(K) \leq \ddot{\mathfrak{r}}_{k,q+1}(K)^+$.

    \emph{Proof of the claim.} Let $h\colon \{1,\ldots,n\}\times \{1,\ldots,m\}\longrightarrow k$ be a witness of the hypothesis. By Tietze's theorem, we can find a continuous function $h'\colon [1,n]\times [1,m]\longrightarrow [1,k]$ such that $h'(t,s) = h(\xi,\zeta)$ whenever $\xi\in\{1,\ldots,n\}$, $\zeta\in\{1,\ldots,m\}$, $|t-\xi|\leq 1/4$, $|s-\zeta|\leq 1/4$ . Let $K$ be a compact space such that $\ddot{\mathfrak{r}}_{n,i}(K) > \ddot{\mathfrak{r}}_{k,q+1}(K)$, and consider the cardinal $\kappa= \ddot{\mathfrak{r}}_{k,q+1}(K)^+$. Let $\mathcal{A}$ be a family of nonempty open subsets of $K$ of cardinality $\ddot{\mathfrak{r}}_{k,q+1}(K)$ such that every $f\in C(K,[1,\ldots,k])$ takes at most $q$ integer values on some $W\in\mathcal{A}$. Without loss of generality we can assume that $\mathcal{A}$ consists of cozero open sets. We are going to recursively define families $\mathcal{B}_\alpha$ for $\alpha\leq \kappa$.
	\begin{itemize}
		\item $\mathcal{B}_0=\mathcal{A}$.
		\item For $\gamma\leq \kappa$ limit ordinal, define $\mathcal{B}_\gamma = \bigcup_{\beta<\gamma}\mathcal{B}_\beta$.
		\item Given $\alpha<\kappa$, we will be given $\mathcal{B}_\alpha$ of cardinality $\ddot{\mathfrak{r}}_{k,q+1}(K)<\ddot{\mathfrak{r}}_{n,i}$. We will be thus able to find $f_\alpha\in C(K,[1,n])$ that takes at least $i$ integer values on every $W\in\mathcal{B}_\alpha$. We define $\mathcal{B}_{\alpha+1}$ as the set of all nonempty finite intersections of the form
		$$ W \cap f_{\alpha_1}^{-1}(I_1-1/4,I_1+1/4)\cap \cdots\cap f_{\alpha_\eta}^{-1}(I_\eta-1/4,I_\eta+1/4)$$
		with $W\in\mathcal{A}$, $\eta\in\{1,\ldots,i\}$,  $I_1,\ldots,I_\eta\in\{1,\ldots,n\}$, $\alpha_1<\cdots<\alpha_\eta\leq \alpha$.	\end{itemize}
	Finally, assuming that $\ddot{\mathfrak{r}}_{m,j} >\kappa$ we choose a function $f_\kappa\in C(K,[1,m])$ that takes at least $j$ integer values on every $W\in\mathcal{B}_\kappa$. For every $\alpha<\kappa$ we define a function $c_\alpha\in C(K,[1,k])$ as $c_\alpha = h'\circ (f_\alpha\times f_\kappa)$. By the definition of $\mathcal{A}$, for every $\alpha<\kappa$ there exists $W_\alpha\in\mathcal{A}$ such that $c_\alpha$ takes at most $q$ integer values on $W_\alpha$. By a cardinality argument, we can find $\alpha_1<\cdots<\alpha_i<\kappa$ for which we  $W_{\alpha_1}=\cdots=W_{\alpha_i}=:W$ and moreover $c_{\alpha_1},\ldots,c_{\alpha_i}$ attain exactly the same set $E$ of at most $q$ many integers on $W$.
	
	Now, we choose inductively $I_1,\ldots,I_i\in \{1,\ldots,n\}$ such that
	\begin{itemize}
		\item $I_\eta\neq I_{\eta'}$ if $\eta\neq \eta'$,
		\item There is a point $x\in W$ such that $|f_{\alpha_\eta}(x)-I_\eta|<1/4$ for $\eta=1,\ldots,i$. 
	\end{itemize}	
	This is done inductively as follows. Suppose that $I_1,\ldots,I_\eta$ have been chosen as above up to some $\eta<i$ and we define $I_{\eta+1}$. We assume inductively that there exists $x_\eta\in W$ with $|f_{\alpha_{\eta'}}(x_\eta)-I_{\eta'}|<1/4$ for $\eta'=1,\ldots,\eta$. In other words
	$$ W \cap f_{\alpha_1}^{-1}(I_1-1/4,I_1+1/4)\cap \cdots\cap f_{\alpha_\eta}^{-1}(I_\eta-1/4,I_\eta+1/4)\neq\emptyset.$$
	The open set above is a member of the family $\mathcal{B}_{\alpha_{\eta+1}}$, so by definition of $f_{\alpha_{\eta+1}}$ we know that $f_{\alpha_{\eta+1}}$ takes at least $i$ many integer values on that set. We take $I_{\eta+1}$ one such integer value different from the $I_{\eta'}$ with $\eta'\leq\eta$. This finishes the construction of the $I_\eta$'s.
	
	Now, we have that $$ W \cap f_{\alpha_1}^{-1}(I_1-1/4,I_1+1/4)\cap \cdots\cap f_{\alpha_i}^{-1}(I_i-1/4,I_i+1/4)$$
	is a nonempty member of $\mathcal{B}_\kappa$. Therefore $f_\kappa$ takes at least $j$ integer values $f_\kappa(y_1)=J_1,\ldots,f_\kappa(y_j) = J_j$ on that set. We prove now that $h$ takes only $q$ many values on $\{I_1,\ldots,I_i\}\times \{J_1,\ldots,J_j\}$ which contradicts the choice of $h$. To see this, fix $\eta\leq i$ and $\xi\leq j$ and we will have that
	\[h(I_\eta,J_\xi)  = h'(f_{\alpha_\eta}(y_\xi), f_\kappa(y_\xi)) = c_{\alpha_\eta}(y_\xi)\in E.\] 
This finishes the proof of the claim.

Proposition~\ref{twostepscardinalsnew}(2)  is now \cite[Theorem 3.1]{dsw}. Since $\ddot{\mathfrak{r}}_{2,2}(K)\leq \ddot{\mathfrak{r}}_{3,2}(K)\leq \cdots$, it is enough to prove the case when $n$ is prime. It is easy to see that
	$$\ddot{\mathfrak{r}}_{n,2}(K)\geq \ddot{\mathfrak{r}}_{n,3}(K)\geq \cdots \geq \ddot{\mathfrak{r}}_{n,n-1}(K)\geq \ddot{\mathfrak{r}}_{n,n}(K) \leq {\mathfrak{r}}(K) $$
	Either  $\ddot{\mathfrak{r}}_{n,2}(K)\leq {\mathfrak{r}}(K)$ and we are done, or otherwise we can pick $q$ the greatest integer $q<n$ for which $\ddot{\mathfrak{r}}_{n,q}(K)>{\mathfrak{r}}(K)$. As noticed in \cite[Lemma 3.2]{dsw}, when $n$ is prime, the function $h(u,v) = u+v \text{ mod }n$ shows that $$\binom{n}{n} \not\rightarrow \binom{2}{q}^{1,1}_{n,q},$$
	and therefore we can apply the claim: If $\ddot{\mathfrak{r}}_{n,q}(K)>\ddot{\mathfrak{r}}_{n,q+1}(K)$ (which is in fact true by the maximality of $q$) then $\ddot{\mathfrak{r}}_{n,2}(K)\leq \ddot{\mathfrak{r}}_{n,q+1}(K)^+$. But the maximality of $q$ also implies that $\ddot{\mathfrak{r}}_{n,q+1}(K) \leq {\mathfrak{r}}(K)$, which concludes the proof.
\end{proof}

\begin{remark}\label{remark Laflamme new}The proof of Laflamme's theorem mentioned above seems to work fine for both cover-reaping and separation-reaping numbers, so $\tilde{\mathfrak{r}}_{n,k}(K) = \tilde{\mathfrak{r}}_{\lceil \frac{n}{k-1}\rceil,2}(K)$ and $\ddot{\mathfrak{r}}_{n,k}(K) = \ddot{\mathfrak{r}}_{\lceil \frac{n}{k-1}\rceil,2}(K)$ for all compact spaces $K$. We will not reproduce those arguments here, the only step of the proof that requires an adaptation is the  implication $(\Leftarrow)$ of \cite[Theorem 2.1]{dsw}. Lemma~\ref{rnn} was just a particular case of this fact, $\ddot{\mathfrak{r}}_{3,3}(K) = \ddot{\mathfrak{r}}_{2,2}(K)$.
\end{remark}

\section{Direct sums}\label{sect:directsums} In this section we pursue to study the stability result of transfinite Daugavet properties by taking $\ell_1$- and $\ell_\infty$-sums. As a consequence of this analysis we will derive in the next section a description of those $L_1(\mu)$ spaces having the $\kappa$-(perfect) Daugavet property.

Let us start with a characterisation of when an arbitrary $\ell_1$-sum of Banach spaces enjoy the transfinite Daugavet properties.

\begin{proposition}\label{prop: k-Daugavet in ell_1 sums}
    Let $\{X_i\colon i\in I\}$ be a family of Banach spaces and $\kappa$ a cardinal. Then
\begin{enumerate}
    \item $\left(\bigoplus_{i\in I}X_i\right)_{\ell_1}$ has the $\kappa$-Daugavet property if and only if $X_i$ has the $\kappa$-Daugavet property for every $i\in I$.
    \item $\left(\bigoplus_{i\in I}X_i\right)_{\ell_1}$ has the $\kappa$-perfect Daugavet property if and only if $X_i$ has the $\kappa$-perfect Daugavet property for every $i\in I$.
\end{enumerate}
In other words, $\mathfrak{Dau}\left(\left(\bigoplus_{i\in I} X_i\right)_{\ell_1}\right) = \min\limits_{i\in I}\{\mathfrak{Dau}(X_i)\}\ $ and $\ \mathfrak{pDau}\left(\left(\bigoplus_{i\in I} X_i\right)_{\ell_1}\right) = \min\limits_{i\in I}\{\mathfrak{pDau}(X_i)\}$.
\end{proposition}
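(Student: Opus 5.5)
Write $X=\bigl(\bigoplus_{i\in I}X_i\bigr)_{\ell_1}$ and let $P_i\colon X\to X_i$ and $J_i\colon X_i\to X$ be the natural projection and inclusion. For finite $\kappa$ both properties with $\kappa=0$ are trivial. The $n$-Daugavet case with $n\geq1$ reduces to the classical Daugavet property, which is known to pass to and from $\ell_1$-sums. So the substantive case is infinite $\kappa$. There I will use the reformulations of Proposition~\ref{lemma: subspace Daugavet} and Lemma~\ref{lemma: Daugavet and CCS}, and their perfect analogues, but the core argument is the same slice-based one for both parts. I will treat (1) and (2) together, writing $\varepsilon=0$ in the perfect case. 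I expect the same argument to handle finite $\kappa$ in the perfect case directly, with no reduction needed.

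\emph{Sufficiency.} Fix $\{y_\alpha\colon\alpha<\kappa\}\subseteq S_X$, a slice $S=S(B_X,f,\delta)$ with $f=(f_i)_i\in S_{X^*}$, so that $\|f\|=\sup_i\|f_i\|$, and $\varepsilon>0$. Choose $i_0$ with $\|f_{i_0}\|>1-\delta/2$. Then $S_0:=\{z\in B_{X_{i_0}}\colon f_{i_0}(z)>1-\delta\}$ is a slice of $B_{X_{i_0}}$ with $J_{i_0}(S_0)\subseteq S$. For each $\alpha$, split $y_\alpha=J_{i_0}P_{i_0}y_\alpha+w_\alpha$, so that $\|y_\alpha\|=\|P_{i_0}y_\alpha\|+\|w_\alpha\|$. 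The family $\{P_{i_0}y_\alpha/\|P_{i_0}y_\alpha\|\}$, taken over those $\alpha$ with $P_{i_0}y_\alpha\neq0$, has cardinality at most $\kappa$. The $\kappa$-(perfect) Daugavet property of $X_{i_0}$, used via Proposition~\ref{lemma: subspace Daugavet} on the subspace $\cspn\{P_{i_0}y_\alpha\}$, then gives $x\in S_0$ with
\[
\|P_{i_0}y_\alpha+x\|\geq(1-\varepsilon)\bigl(\|P_{i_0}y_\alpha\|+1\bigr),
\]
with equality and $\varepsilon=0$ in the perfect case. This forces $\|x\|\geq 1-\varepsilon$. It follows that
\[
\|y_\alpha+J_{i_0}x\|=\|P_{i_0}y_\alpha+x\|+\|w_\alpha\|\geq(1-\varepsilon)\bigl(\|y_\alpha\|+1\bigr)=(1-\varepsilon)2,
\]
which is exactly what is needed, and gives exactly $2$ in the perfect case.

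\emph{Necessity.} Assume $X$ has the property, fix $i_0$, a family $\{y_\alpha\}\subseteq S_{X_{i_0}}$, a slice $S_0=S(B_{X_{i_0}},g,\delta)$ with $\|g\|=1$, and $\varepsilon$. Consider $\tilde g\in X^*$ equal to $g$ on the $i_0$-coordinate and $0$ elsewhere. It defines the slice $S=S(B_X,\tilde g,\delta')$ of $B_X$, with $\delta'\leq\delta$ small, to be fixed later. Apply the property of $X$ to $\{J_{i_0}y_\alpha\}$ and $S$ to get $x\in S$. Put $u=P_{i_0}x$ and $v=x-J_{i_0}u$. Then $g(u)>1-\delta'$ forces $\|u\|>1-\delta'$, hence $\|v\|<\delta'$. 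Therefore
\[
\|y_\alpha+u\|=\|J_{i_0}y_\alpha+x\|-\|v\|>2-\varepsilon-\delta',
\]
and $u\in S_0$, which settles the non-perfect case.

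The main obstacle is the perfect case of this direction, where no error is allowed. There, $\|J_{i_0}y_\alpha+x\|=2$ gives $\|y_\alpha+u\|+\|v\|=2$ together with $\|u\|+\|v\|\leq1$. Hence $\|y_\alpha+u\|\geq 2-\|v\|\geq 1+\|u\|$. Since also $\|y_\alpha+u\|\leq 1+\|u\|$, this is an equality, so $\|u\|+\|v\|=1$ and $\|y_\alpha+u\|=1+\|u\|$. I therefore normalise and take $u/\|u\|$. Then $\|y_\alpha+u/\|u\|\|=2$, by the standard fact that $\|a+tb\|=\|a\|+t\|b\|$ persists for larger $t\geq1$; this follows from the triangle inequality, writing $a+tb=(a+b)+(t-1)b$. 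Finally, $g(u/\|u\|)\geq g(u)>1-\delta'$, since $g(u)>0$ and $\|u\|\leq1$, so $u/\|u\|\in S_0$. This completes the perfect case, and the cardinal formulas follow immediately.
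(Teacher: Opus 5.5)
Your proof is correct. The necessity half is essentially the paper's argument: embed the family and the functional at coordinate $i_0$, then project. In the perfect case you spell out what the paper dismisses as ``similar'': $\|J_{i_0}y_\alpha+x\|=2$ forces $\|y_\alpha+u\|=1+\|u\|$, after which you normalise $u$.

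The sufficiency half takes a genuinely different and shorter route.
\begin{itemize}
\item The paper first treats countable $I$. In \emph{every} coordinate $n$ it picks $u_n$ that almost norms $x_n^*$ and is almost $L$-orthogonal to $\{y^n_\alpha\}$. It then spreads the mass with weights $r_n$ satisfying $\sum_n r_n\|x_n^*\|>1-\delta$. Finally it reduces arbitrary $I$ to the countable support of a point of the slice.
\item You instead use that $X^*=\bigl(\bigoplus_i X_i^*\bigr)_{\ell_\infty}$, so $\|f\|=\sup_i\|f_i\|$. Hence a single coordinate $i_0$ already carries a slice $S_0$ with $J_{i_0}(S_0)\subseteq S$, and $\ell_1$-additivity absorbs the off-coordinate parts $w_\alpha$ for free.
\end{itemize}
Your route avoids both the weighted combination and the countable/uncountable case split, and works verbatim for any index set. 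The paper's construction only yields a witness spread over many coordinates, which this statement does not need.

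You can even drop Proposition~\ref{lemma: subspace Daugavet}. Apply the definition to the normalised vectors $P_{i_0}y_\alpha/\|P_{i_0}y_\alpha\|$ and take a norming functional of each sum. This gives $\|P_{i_0}y_\alpha+x\|\geq(1-\varepsilon)(\|P_{i_0}y_\alpha\|+1)$ directly, with equality and $\varepsilon=0$ in the perfect case. It also confirms your expectation that finite $\kappa$ in the perfect case needs no separate treatment.

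One small slip concerns your ``standard fact''. The decomposition $a+tb=(a+b)+(t-1)b$ only gives the trivial upper bound $\|a+tb\|\leq\|a\|+t\|b\|$. For the lower bound, write $t(a+b)=(a+tb)+(t-1)a$, which gives $t(\|a\|+\|b\|)\leq\|a+tb\|+(t-1)\|a\|$. Alternatively, note that a norming functional of $a+b$ norms $a$ and $b$ simultaneously.
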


\begin{proof}We will focus on the proof of (1), the proof of (2) is similar. For $[\Rightarrow]$, we assume that the direct sum has the $\kappa$-Daugavet property, we fix $i_0\in I$, and we show that $X_{i_0}$ has the $\kappa$-Daugavet property. Let $\{y_\alpha\colon \alpha<\kappa\}\subseteq S_{X_{i_0}}$, let $S(B_{X_{i_0}}, x^*_{i_0}, \alpha)$, and $\varepsilon>0$. Pick $\delta>0$ such that $3\delta<\min\{\alpha, \varepsilon\}$.
    
    Consider the set $\{(0,\dots,0,y_\alpha,0,\dots)\colon \alpha<\kappa\}$ in $S_{(\bigoplus_{i\in I}X_i)_{\ell_1}}$ and the slice 
    \[
    S(B_{(\bigoplus_{i\in I}X_i)_{\ell_1}}, (0,\dots,0, x^*_{i_0},0,\dots), \delta).
    \]
    Since $(\bigoplus_{i\in I}X_i)_{\ell_1}$ has the $\kappa$-Daugavet property, there is a $(u_i)\in S_{(\bigoplus_{i\in I}X_i)_{\ell_1}}$ such that

    \begin{equation}\label{eq: 1-sum orthogonal}
      \Vert (0,\dots,0,y_\alpha,0,\dots)+(u_i)\Vert=\Vert y_\alpha+u_{i_0}\Vert + \sum_{i\in I\setminus\{i_0\}}\Vert u_i\Vert >2-\delta  
    \end{equation}
      holds for every $\alpha<\kappa$,
    and
    \begin{equation}\label{eq: 1-sum slice}
           x^*_{i_0}(u_{i_0})>1-\delta.
    \end{equation}
The equation \eqref{eq: 1-sum slice} implies that $\Vert u_{i_0}\Vert>1-\delta$. Therefore,  $\sum_{i\in I\setminus\{i_0\}}\Vert u_n\Vert<\delta$ because $\sum_{i\in I}\Vert u_i\Vert=1$. Hence, from \eqref{eq: 1-sum orthogonal}, we deduce that 
\[
\Vert y_\alpha+u_{i_0}\Vert>2-2\delta.
\]
holds for every $\alpha<\kappa$. Finally, set $w:=\frac{u_{i_0}}{\Vert u_{i_0}\Vert}$, then
\[
\Vert y_\alpha+w\Vert\geq 2-2\delta-\Vert u_{i_0}-w\Vert=2-2\delta-\Big|1-\Vert u_{i_0}\Vert\Big|>2-3\delta>2-\varepsilon
\]
for every $\alpha<\kappa$.

    $[\Leftarrow]$. We assume now that all $X_i$ have the $\kappa$-Daugavet property and we prove it for $(\bigoplus_{i\in I}X_i)_{\ell_1}$. Indeed, fix $\{(y^i_\alpha)\colon \alpha<\kappa, i\in I\}$ in $S_{(\bigoplus_{i\in I}X_i)_{\ell_1}}$, a slice $S(B_{(\bigoplus_{i\in I}X_i)_{\ell_1}}, (x^*_i), \eta)$, and $\varepsilon>0$. Let $\delta>0$ be such that $2\delta<\min\{\eta,\varepsilon\}$. We will prove the claim in two steps.

    \textit{Step 1.} Assume first that $I$ is countable, say $I\equiv\N$ (if $I$ is finite, the result follows from the infinite countable case and the $[\Rightarrow]$ part already proved). 
    Since every $X_n$ has the $\kappa$-Daugavet property, then, for every $n$, by Lemma~\ref{lemma: subspace Daugavet}, we can find $u_n\in S_{X_n}$ such that
\begin{equation}\label{eq: 1-sum slice2}
    x^*_n(u_n)\geq (1-\delta)\Vert x^*_n\Vert
\end{equation}
and
\begin{equation}\label{eq: 1-sum orthogonal2}
    \Vert y^n_\alpha+ru_n\Vert>(1-\delta)(\Vert y^n_\alpha\Vert+r)
\end{equation}
for every $\alpha<\kappa$ and $r\geq0$. (In case that $x_n^*=0$, one may forget Eq.~\eqref{eq: 1-sum slice2}).

Pick a sequence $(r_n)\in [0,1]$ such that 
\begin{equation}\label{eq: duality of ell_1 and ell_infty}
  (r_n u_n)\in S_{\ell_1(X_n)}\quad \text{ and }\quad \sum_{n\geq 1}r_n\Vert x^*_n\Vert>1-\delta.  
\end{equation}
 Now, by \eqref{eq: 1-sum slice2} and \eqref{eq: duality of ell_1 and ell_infty}, we have that
\[
\Big\langle (x^*_n), (r_nu_n)\Big\rangle=\sum_{n\geq 1}r_nx^*_n(u_n)\geq (1-\delta)\sum_{n\geq 1}r_n\Vert x^*_n\Vert>(1-\delta)^2>1-\eta.
\]
Hence, $(r_n u_n)\in S(B_{\ell_1(X_n)}, (x^*_1,x^*_2,\dots), \eta)$. Finally, by \eqref{eq: 1-sum orthogonal2} and \eqref{eq: duality of ell_1 and ell_infty}, we have that
\begin{align*}
    \Vert (y^n_\alpha)+(r_nu_n)\Vert &=\sum_{n\geq 1}\Vert y^n_\alpha+r_nu_n\Vert 
    > (1-\delta)\sum_{n\geq 1}(\Vert y^n_\alpha\Vert+ r_n)
    =2(1-\delta)>2-\varepsilon.
\end{align*}

\textit{Step 2.} Assume now that $I$ is an arbitrary infinite set. Find $(x_i)\in S_{(\bigoplus_{i\in I}X_i)_{\ell_1}}$ such that $\Big\langle (x^*_i), (x_i)\Big\rangle>1-\delta$. Denote by $J:=\{i\in I\colon x_i\neq 0\}$ and observe that $J$ is at most countable. Hence, $\sum_{i\in J}x^*_i(x_i)>1-\delta$ and $\sup_{i\in J}\|x^*_i\|>1-\delta$.

By Step 1, we can find a sequence $(u_i)_{i\in J}\in S_{(\bigoplus_{i\in J}X_i)_{\ell_1}}$ such that 
\begin{equation}\label{eq: step 1 slice}
  \sum_{i\in J}x^*_i(u_i)>(1-\delta)^2
\end{equation}
and
\begin{equation}\label{eq: step 1 orthogonal}
   \|(y^i_\alpha)_{i\in J} +(u_i)_{i\in J} \|\geq (1-\delta)(\|(y^i_\alpha)_{i\in J}\|+1)  
\end{equation}
for every $\alpha<\kappa$.

For every $i\in I$, let
\[
w_i=
\begin{cases}
u_i,\text{ if } i\in J\\
0, \text{ if } i\in I\setminus J.
\end{cases}
\]
Then it is clear that $(w_i)\in S_{(\bigoplus_{i\in I}X_i)_{\ell_1}}$. Now, by \eqref{eq: step 1 slice}, we have that
\[
\Big\langle (x^*_i), (w_i)\Big\rangle=\sum_{i\in J}x^*_i(u_i)>(1-\delta)^2>1-\eta.
\]
Hence, $(w_i)\in S(B_{(\bigoplus_{i\in I}X_i)_{\ell_1}}, (x^*_i), \eta)$. Finally, by \eqref{eq: step 1 orthogonal}, we have that
\begin{align*}
    \Vert (y^i_\alpha)+(w_i)\Vert &=\|(y^i_\alpha)_{i\in J} +(u_i)_{i\in J} \|+\sum_{i\in I\setminus J}\Vert y^i_\alpha\Vert \\
    &> (1-\delta)(\|(y^i_\alpha)_{i\in J}\|+1)+\sum_{i\in I\setminus J}\Vert y^i_\alpha\Vert \\
    &\geq (1-\delta)\Big(1+\sum_{i\in J}\Vert y^i_\alpha\Vert+\sum_{i\in I\setminus J}\Vert y^i_\alpha\Vert \Big)=2(1-\delta)>2-\varepsilon.\qedhere
\end{align*}
\end{proof}

Next we turn to study the transfinite Daugavet properties for $\ell_\infty$-sums of Banach spaces. To begin with, let us start with a necessary condition in order to an $\ell_\infty$-sum enjoy the transfinite Daugavet property.

\begin{proposition}\label{prop:necelinfisum}
Let $\{X_i\colon  i\in I\}$ be an arbitrary family of Banach spaces, and $\kappa$ a cardinal.
\begin{enumerate}
    \item If $\left(\bigoplus_{i\in I}X_i\right)_{\ell_\infty}$ has the $\kappa$-Daugavet property, then $X_i$ has the $\kappa$-Daugavet property for every $i\in I$.
    \item If $\left(\bigoplus_{i\in I}X_i\right)_{\ell_\infty}$ has the $\kappa$-perfect Daugavet property, then $X_i$ has the $\kappa$-perfect Daugavet property for every $i\in I$.
\end{enumerate}
In other words, $\mathfrak{Dau}\left(\left(\bigoplus_{i\in I} X_i\right)_{\ell_\infty}\right)\leq \min\limits_{i\in I}\mathfrak{Dau}(X_i)\ $ and $\ \mathfrak{pDau}\left(\left(\bigoplus_{i\in I} X_i\right)_{\ell_\infty}\right)\leq \min\limits_{i\in I}\mathfrak{pDau}(X_i)$. 
\end{proposition}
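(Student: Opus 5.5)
We follow the pattern of the $[\Rightarrow]$ part of Proposition~\ref{prop: k-Daugavet in ell_1 sums}: embed the data of one summand into the sum, apply the property there, and project back. Set $Z:=\left(\bigoplus_{i\in I}X_i\right)_{\ell_\infty}$, fix $i_0\in I$, a family $\{y_\alpha\colon\alpha<\kappa\}\subseteq S_{X_{i_0}}$, a slice $S(B_{X_{i_0}},x^*_{i_0},\eta)$ with $\|x^*_{i_0}\|=1$, and $\varepsilon>0$. The naive embedding $y_\alpha\mapsto(0,\dots,0,y_\alpha,0,\dots)$ does not work: the witness $(u_i)$ might reach norm close to $2$ in a coordinate $i\neq i_0$. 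So we embed the $y_\alpha$ so that every other coordinate is harmless.

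First fix $z=(z_i)\in B_Z$ with $z_{i_0}=0$, and for $i\neq i_0$ choose $z_i$ so that no $u\in B_{X_i}$ satisfies $\|z_i+u\|>1$ by much. The simplest choice is $z_i=0$ for $i\neq i_0$, which gives $\|z_i+u_i\|\le 1$. Then consider
\[
\tilde y_\alpha:=(0,\dots,0,y_\alpha,0,\dots)\in S_Z,
\]
together with the slice of $B_Z$ given by the functional $\tilde x^*:=x^*_{i_0}\circ P_{i_0}$, where $P_{i_0}$ is the coordinate projection; note $\|\tilde x^*\|=1$. Apply the $\kappa$-Daugavet property of $Z$ with tolerance $\min\{\eta,\varepsilon\}$, and choose it smaller than $1$. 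This yields $u=(u_i)\in B_Z$ with $x^*_{i_0}(u_{i_0})>1-\eta$ and $\|\tilde y_\alpha+u\|>2-\varepsilon$ for all $\alpha<\kappa$.

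Next we compute the norm coordinatewise: $\|\tilde y_\alpha+u\|=\max\{\|y_\alpha+u_{i_0}\|,\sup_{i\neq i_0}\|u_i\|\}$. Since $\|u_i\|\le1<2-\varepsilon$ for $i\neq i_0$, the maximum must be attained at $i_0$, so $\|y_\alpha+u_{i_0}\|>2-\varepsilon$ for every $\alpha$. Hence $u_{i_0}\in S(B_{X_{i_0}},x^*_{i_0},\eta)$ is the desired witness, with no normalisation needed, since $u_{i_0}\in B_{X_{i_0}}$ already.

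The perfect case is identical with $\varepsilon=0$. The equality $\|\tilde y_\alpha+u\|=2$ forces $\|y_\alpha+u_{i_0}\|=2$, because every other coordinate contributes at most $1<2$. The only delicate point, which we expect to be the main obstacle if any, is making sure the irrelevant coordinates cannot carry the norm. Here this is automatic because $\tilde y_\alpha$ vanishes off $i_0$, so that step is trivial. The cardinal inequalities then follow directly from the definitions of $\mathfrak{Dau}$ and $\mathfrak{pDau}$.
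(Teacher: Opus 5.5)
Your argument is correct and is essentially the paper's proof: embed $y_\alpha$ in the $i_0$-th coordinate, use the functional $x^*_{i_0}\circ P_{i_0}$, and note that the coordinates $i\neq i_0$ of $\tilde y_\alpha+u$ are just $u_i$, of norm at most $1<2-\varepsilon$ (or $<2$ in the perfect case), so the norm must be attained in the $i_0$-th coordinate. The one flaw is your opening claim that this ``naive'' embedding does not work: it is false, your own argument shows that it works, and that paragraph together with the auxiliary $z$ can simply be deleted.
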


\begin{proof}
Since both proofs are similar, let us prove (2), being the proof of (1) completely similar.

So assume that $\left(\bigoplus_{i\in I}X_i\right)_{\ell_\infty}$ has the $\kappa$-perfect Daugavet property, fix $i_0\in I$, and let us prove that $X_{i_0}$ has the $\kappa$-perfect Daugavet property. In order to do so, suppose that $Y = \{y^\alpha \colon  \alpha<\kappa\}$ is a subset of the sphere of $X_{i_0}$ of cardinality less than $\kappa$ and take a slice $S=S(B_{X_{i_0}},f,\alpha)$ for some $f\in S_{X_{i_0}^*}$ and $\alpha>0$. For every $\alpha<\kappa$, let $\tilde{y}^\alpha$ be the vector of $\left(\bigoplus_{i\in I}X_i\right)_{\ell_\infty}$ that has $y^\alpha$ on the $i_0$-th coordinate and zero elsewhere. Moreover, define the functional $F\colon \left(\bigoplus_{i\in I}X_i\right)_{\ell_\infty}\longrightarrow \mathbb R$ such that
$$F((x_i)):=f(x_{i_0})\ \forall (x_i)\in \ell_\infty(X_i),$$
which is a norm-one element of $\ell_\infty(X_i)^*$. Since $\left(\bigoplus_{i\in I}X_i\right)_{\ell_\infty}$ has the $\kappa$-perfect Daugavet property we can obtain a vector $x$ with $\|x\|=1$, $F(x)>1-\alpha$ and $\|x-\tilde{y}^\alpha\|=2$ for all $\alpha$. On the one hand, observe that
$$f(x_{i_0})=F(x)>1-\alpha,$$
so $x_{i_0}\in S$. On the other hand, notice that $\|x_i-\tilde{y}_i^\alpha\| = \|x_i\|\leq 1$ for $i\neq i_0$, so we must have $\|x_{i_0}-\tilde{y}_{i_0}^\alpha\| = \|x_{i_0}-y^\alpha\| =2$ for all $\alpha$. So $x_{i_0}$ is the vector of $X_{i_0}$ that shows that it has the $\kappa$- perfect Daugavet property.
\end{proof}

A converse can be established for finite $\ell_\infty$-sums.

\begin{proposition}\label{prop:sufic0sum}
Let $\{X_i\colon  i\in I\}$ be a finite family of Banach spaces, and $\kappa$ a cardinal.
\begin{enumerate}
    \item $\left(\bigoplus_{i\in I}X_i\right)_{\ell_\infty}$ has the $\kappa$-Daugavet property if and only if $X_i$ has the $\kappa$-Daugavet property for every $i\in I$.
    \item $\left(\bigoplus_{i\in I}X_i\right)_{\ell_\infty}$ has the $\kappa$-perfect Daugavet property if and only if $X_i$ has the $\kappa$-perfect Daugavet property for every $i\in I$.
\end{enumerate}
In other words, $\mathfrak{Dau}\left(\left(\bigoplus_{i\in I}X_i\right)_{\ell_\infty}\right)= \min\limits_{i\in I}\mathfrak{Dau}(X_i)$ $\ $ and $\ $ $\mathfrak{pDau}\left(\left(\bigoplus_{i\in I}X_i\right)_{\ell_\infty}\right)= \min\limits_{i\in I}\mathfrak{pDau}(X_i)$.
\end{proposition}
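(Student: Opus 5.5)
The forward implications are already given by Proposition~\ref{prop:necelinfisum}, so only the converse needs proof. Assume every $X_i$, $i\in I=\{1,\dots,n\}$, has the $\kappa$-Daugavet property (resp.\ $\kappa$-perfect Daugavet property), and write $X=(\bigoplus_{i\in I}X_i)_{\ell_\infty}$. The finite case $\kappa<\omega$ needs no separate argument: it reduces to the Daugavet property (resp.\ perfect Daugavet property) of the sum of $n$ spaces, and that case is covered by the same construction with a finite family. We may therefore assume $\kappa$ infinite, which lets us use Corollary~\ref{cor:weaknetdauga}, or equivalently Lemma~\ref{lemma: Daugavet and CCS} and Proposition~\ref{lemma: perfect Daugavet and CCS}, in each coordinate.

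\textbf{Reduction of the problem.} Fix $\{y_\alpha\colon\alpha<\kappa\}\subseteq S_X$ and $x_0=(x_0^i)_i\in B_X$. By Corollary~\ref{cor:weaknetdauga} it suffices to build a net in $B_X$ converging weakly to $x_0$ whose members $x$ satisfy $\|y_\alpha+x\|=2$ for all $\alpha$ (perfect case), or $\inf_\alpha\|y_\alpha+x\|$ close to $2$ (ordinary case). Since $\|y_\alpha\|=\max_i\|y_\alpha^i\|=1$, we split the index set as $\kappa=\bigcup_{i\in I}A_i$, where $A_i$ consists of those $\alpha$ with $\|y^i_\alpha\|=1$ (perfect case), or $\|y_\alpha^i\|>1-\varepsilon$ (ordinary case), choosing each $\alpha$ in a single $A_i$. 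Every $A_i$ has cardinality at most $\kappa$.

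\textbf{Coordinatewise construction.} For each $i$, apply the $\kappa$-(perfect) Daugavet property of $X_i$ in its net form (Corollary~\ref{cor:weaknetdauga}) to the family $\{y^i_\alpha/\|y^i_\alpha\|\colon\alpha\in A_i\}\subseteq S_{X_i}$ and the point $x_0^i$. This yields a net $(x^i_{j})_{j\in J_i}$ in $B_{X_i}$ converging weakly to $x_0^i$ with $\|y^i_\alpha/\|y^i_\alpha\|+x^i_j\|=2$ for all $\alpha\in A_i$ (resp.\ $>2-\varepsilon$ eventually). In the perfect case, $\|u/\|u\|+x\|=2$ with $\|u\|=1$ is exactly what we need. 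In the ordinary case, $\|y^i_\alpha+x^i_j\|\geq \|y^i_\alpha/\|y^i_\alpha\|+x^i_j\|-\varepsilon>2-2\varepsilon$.

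\textbf{Combining the coordinates.} Take the product directed set $J=\prod_i J_i$ and set $x_j=(x^i_{j_i})_i\in B_X$. Since $I$ is finite, $X^*=(\bigoplus X_i^*)_{\ell_1}$ is a finite sum, so coordinatewise weak convergence gives $x_j\to x_0$ weakly. This is the step where finiteness is essential, and the main obstacle if one tried to extend the result to infinite $I$. For $\alpha\in A_i$ we have $\|y_\alpha+x_j\|\geq\|y_\alpha^i+x^i_{j_i}\|$, which equals $2$ (resp.\ exceeds $2-2\varepsilon$). In the ordinary case, $\varepsilon$ must be handled together with the net: we index also over $\varepsilon=1/m$, or we work directly with the slice formulation, picking one point in each coordinate inside a weakly open set and invoking Lemma~\ref{lemma: Daugavet and CCS}(ii) with the weak neighbourhood $\prod_i U_i$ of $x_0$. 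That completes the argument.
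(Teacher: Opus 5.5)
Your proposal is correct and is essentially the paper's argument: the paper also assigns to each $\alpha$ a coordinate $i[\alpha]$ with $\|y_\alpha^{i[\alpha]}\|=1$, picks points coordinatewise in a product weak neighbourhood $\prod_i W_i$ using the weakly-open-set form of the property (your fallback option rather than nets), and concludes from $\|y_\alpha+x\|\geq\|y_\alpha^{i[\alpha]}+x^{i[\alpha]}\|$ via Lemma~\ref{lemma: Daugavet and CCS} (resp.\ Proposition~\ref{lemma: perfect Daugavet and CCS}). Since $I$ is finite, $\max_i\|y_\alpha^i\|=1$ is attained, so your $1-\varepsilon$ splitting in the ordinary case is unnecessary and the sets $A_i$ need not depend on $\varepsilon$.
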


\begin{proof}
Since the proofs of (1) and (2) are similar, we will prove with no loss of generality (1), being the proof of (2) completely similar. Moreover, the necessary condition is proved in Proposition~\ref{prop:necelinfisum}, so let us prove the sufficient condition. In order to do so, take $S=\{s^\alpha\colon  \alpha<\kappa\}$ a set of cardinality at most $\kappa$ inside the unit sphere of $X:=(\bigoplus_{i\in I}X_i)_{\ell_\infty}$ and $W$ be a non-empty weak open subset of $B_{X}$. Since the weak topology of $B_X$ is the product topology of weak topologies of $B_{X_i}$ we can assume up to taking a smaller open set that $W=\prod_{i\in I} W_i$, where every $W_i$ is a non-empty weakly open subset of $B_{X_i}$. Fix $\varepsilon>0$. Since $\|s^\alpha\|=1$, for every $\alpha$ there exists $i[\alpha]\in I$ such that $\|s^\alpha_{i[\alpha]}\|=1$. For every $j\in I$, let $S_j = \{s^\alpha_j\colon  i[\alpha]=j \}$. Each $S_j$ is a subset of the sphere of $X_j$ of cardinality at most $\kappa$. Therefore, since every $X_i$ has the $\kappa$-Daugavet property, we can find $x_j$ in $W_j$ such that $\|x_j-s^\alpha_j\| > 2-\varepsilon$ whenever $j=i[\alpha]$. Now, consider $x= (x_j)_{j\in I}$, which is clearly an element of $\prod\limits_{i\in I} W_i=W$. For every $\alpha<\kappa$, we will have
\begin{align*}
\|x-s^\alpha\|&\geq \left\|x_{i[\alpha]}-s^\alpha_{i[\alpha]}\right\| 
> 2-\varepsilon.
\end{align*}
Consequently, we get that $\left(\bigoplus_{i\in I}X_i\right)_{\ell_\infty}$ has the $\kappa$-Daugavet property in virtue of  Lemma~\ref{lemma: Daugavet and CCS}(iii).
\end{proof}

We do not know whether the above result holds true for arbitrary sums of Banach spaces (see Question~\ref{ques:linftysum}). On the other hand, the result for infinite $c_0$-sums is false.

\begin{remark}\label{remark:c0sumfalso}
Let $\{X_i\colon i\in I\}$ an infinite family of non-trivial Banach spaces and let $X:=\left(\bigoplus_{i\in I}X_i\right)_{c_0}$. Then $X$ fails to be $\omega_1$-octahedral.

Indeed, take an infinite countable set $J\subseteq I$ and consider, for every $j\in I$, a vector $x_j\in S_X$ such that $x_j(i)=0$ if $i\neq j$. Now, given any $x\in S_X$, since $J$ is infinite, there exists some $j_0\in J$ such that $\Vert x(j)\Vert<\frac{1}{2}$. It is immediate $\Vert x\pm x_j\Vert\leq \frac{3}{2}$. Since $x\in S_X$ was arbitrary we conclude the failure of $\omega_1$ octahedrality.
\end{remark}

\section{Spaces \texorpdfstring{$L_1(\mu)$}{L1(mu)}}\label{sect:L1} 
Our aim here is to study the transfinite Daugavet property for $L_1(\mu)$ spaces. To do so, we will study the case of the spaces $L_1(\{0,1\}^\kappa)$ and provide the general result from Maharam's decomposition (see \eqref{eq:Maharam}). 

So we begin with the proof of the case of $L_1(\{0,1\}^\kappa)$, whose proof is inspired by the Step 1 in \cite[Proposition 6.11]{brss26}

\begin{theorem}\label{theo: L1 has k-Daugavet}
$\mathfrak{Dau}(L_1(\{0,1\}^\kappa) = \kappa$ for every infinite cardinal $\kappa$.
\end{theorem}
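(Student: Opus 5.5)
We must show two things: that $L_1(\{0,1\}^\kappa)$ has the $\lambda$-Daugavet property for every $\lambda<\kappa$, and that it fails the $\kappa$-Daugavet property. Write $\mu$ for the product measure on $\Omega=\{0,1\}^\kappa$ and $X=L_1(\mu)$.

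For the upper bound $\mathfrak{Dau}(X)\leq\kappa$, the family is dictated by the density: $\dens(X)=\kappa$, so we take a norm-dense set $\{y_\alpha\colon\alpha<\kappa\}\subseteq S_X$ and use the introduction's remark that $\mathfrak{Dau}(X)$ is at most the density character. Concretely, fix the slice $S=S(B_X,\mathbf 1,1/2)$, which consists of $x$ with $\int x\,d\mu>1/2$. Given $x\in S$, put $x^+=\max(x,0)$, so that $\|x^+\|>1/2$. Choose $y_\alpha$ with $\|y_\alpha+x^+/\|x^+\|\|<\varepsilon$. Then $x$ and $y_\alpha$ are almost negatively aligned on the support of $x^+$, and a direct computation gives
\[
\|y_\alpha+x\|\leq\|x^-\|+\bigl(1-\|x^+\|\bigr)+\varepsilon+\bigl|\|x^+\|-\|x^-\|\bigr|,
\]
which is at most roughly $2-2\|x^+\|+\|x^-\|+\varepsilon$ and hence stays boundedly below $2$. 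An even cleaner choice, which I would use to avoid the messy estimate, is the following: take $y_\alpha$ ranging over a dense subset of the set of functions $-\chi_A/\mu(A)$, with $A$ a finite union of basic cylinders. For $x\in S$, approximate $x^+$ in $L_1$ by a simple cylinder function and select $A$ with $\|y_\alpha+x\|<2-c$.

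For the lower bound, fix $\lambda<\kappa$, a family $\{y_\alpha\colon\alpha<\lambda\}\subseteq S_X$, a slice $S(B_X,g,\eta)$ with $g\in L_\infty$, and $\varepsilon>0$. Each $y_\alpha$ and $g$ depends, up to $\varepsilon$-approximation, on only countably many coordinates. More precisely, every $L_1$ function is measurable with respect to the $\sigma$-algebra generated by countably many coordinates, and $g$ can be replaced by a simple cylinder function $g'$ depending on finitely many coordinates at the cost of shrinking $\eta$. The union $C$ of these coordinate sets has $|C|\leq\lambda\cdot\omega<\kappa$, because $\kappa$ is infinite and $\lambda<\kappa$. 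Pick a coordinate $\gamma\notin C$ and write $r_\gamma=2\chi_{\{\omega_\gamma=1\}}-1$ for the Rademacher function in that coordinate. We need $x$ in the slice with almost full norm against every $y_\alpha$.

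To build it, take $x_0\in S_X$ with $\int g' x_0>1-\eta/2$ and $x_0$ depending only on the finitely many coordinates of $g'$. Set $x=x_0(1+r_\gamma)$, which equals $2x_0$ on $\{\omega_\gamma=1\}$ and $0$ elsewhere. Independence of coordinate $\gamma$ from $C$ then gives $\|x\|=\|x_0\|=1$ and $\int g'x=\int g'x_0$, so $x$ lies in the slice. It remains to estimate the norm, using independence again:
\[
\|y_\alpha+x\|=\tfrac12\|y_\alpha+2x_0\|+\tfrac12\|y_\alpha\|.
\]
This only gives about $\tfrac12(\|y_\alpha\|+\cdots)$, which is not enough. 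The remedy is to use many fresh coordinates: pick $\gamma_1,\dots,\gamma_m\notin C$ and put $x=x_0\,2^m\chi_{\{\omega_{\gamma_i}=1\ \forall i\}}$. Conditioning on the countably generated $\sigma$-algebra of $C$,
\[
\|y_\alpha+x\|=2^{-m}\|y_\alpha+2^m x_0\|+(1-2^{-m})\|y_\alpha\|\geq 2^{-m}\bigl(2^m-1\bigr)+(1-2^{-m}),
\]
and the right-hand side tends to $2$ as $m\to\infty$. So $x$ lies in the slice and satisfies $\|y_\alpha+x\|>2-\varepsilon$ for every $\alpha<\lambda$ once $m$ is large.

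The main obstacle is making the independence argument rigorous. The key fact is that a function measurable with respect to coordinates in $C$ and a function depending only on coordinates outside $C$ are independent random variables, so the $L_1$ norm of their combination factorises through the conditional expectation. This is the step I would write out carefully, together with the reduction of $g$ to a cylinder function, which is handled by density of simple cylinder functions in $L_\infty$ restricted to the slice condition. For the upper bound, I expect the main work to be the correct choice of the $\kappa$-sized family and slice; the simpler dense-set argument above should suffice.
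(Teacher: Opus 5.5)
Your upper bound is the paper's density argument and is fine. Taking $y_\alpha$ close to $-x/\|x\|$ gives $\|y_\alpha+x\|\leq 1+\varepsilon$ directly, with no need to split $x$ into $x^\pm$. For uncountable $\kappa$, your exact independence formula $\|y_\alpha+x\|=2^{-m}\|y_\alpha+2^mx_0\|+(1-2^{-m})\|y_\alpha\|\geq 2-2^{1-m}$ is correct.

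The genuine gap is the case $\kappa=\omega$. There $\lambda$ is finite, and for $\lambda\geq 1$ you get $\lambda\cdot\omega=\omega=\kappa$. So your claim $|C|\leq\lambda\cdot\omega<\kappa$ is false. A single $y_\alpha$, or the functional $g$, may depend on every coordinate (e.g.\ $t\mapsto\sum_n 2^{-n}t(n)$, suitably normalised), and then there is no fresh coordinate outside $C$ at all. Your argument therefore never shows that $L_1(\{0,1\}^\omega)$ has the Daugavet property, which is exactly what $\mathfrak{Dau}\geq\omega$ means here. The paper handles this case separately: $L_1$ of an atomless measure has the Daugavet property, which already gives the $n$-Daugavet property for every $n$. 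Your method could also be repaired, as follows:
\begin{itemize}
\item approximate the finitely many $y_\alpha$ and $x_0$ in $L_1$ by functions of finitely many coordinates;
\item move the block $\gamma_1,\dots,\gamma_m$ to infinity along disjoint blocks; the functions $2^m\chi_D-1$ are then independent, mean zero and uniformly bounded, hence weak$^*$ null in $L_\infty$, so $\int gx\to\int gx_0$.
\end{itemize}
Some such argument has to be supplied.

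The second flaw is replacing $g\in L_\infty$ by a cylinder function $g'$ of finitely many coordinates ``at the cost of shrinking $\eta$''. Cylinder functions are not norm-dense in $L_\infty(\{0,1\}^\kappa)$; their closure is a copy of $C(\{0,1\}^\kappa)$. The step genuinely fails. Suppose $E$ and its complement meet every nonempty cylinder in positive measure. Then every slice of $B_X$ defined by a cylinder function contains a function vanishing on $E$, so none is contained in $S(B_X,\chi_E,\eta)$ for $\eta\leq 1$.

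For uncountable $\kappa$ this step is simply unnecessary. $g$ is itself measurable with respect to countably many coordinates, so put those into $C$, together with the countably many coordinates of any $x_0\in S_X$ in the slice. Independence then gives $\|x\|=\|x_0\|$ and $\int gx=\int gx_0$ exactly.

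With this correction your argument is a close relative of the paper's. Both place the new vector on a cell cut out by $n$ fresh coordinates, on which every element of the family carries exactly the fraction $2^{-n}$ of its norm. The difference lies in how the slice is handled:
\begin{itemize}
\item The paper, using the subspace reformulation, takes a set $B$ on which $|\Phi|>1-\varepsilon$ and a cell $A_\sigma$ with $\mu(B\cap A_\sigma)>0$. It then uses $\operatorname{sign}(\Phi)\chi_{B\cap A_\sigma}/\mu(B\cap A_\sigma)$, so it needs no information about which coordinates $\Phi$ depends on.
\item You keep the functional independent of the fresh block and transplant an arbitrary element of the slice. This costs enlarging $C$ but yields an exact formula for $\|y_\alpha+x\|$.
\end{itemize}
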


\begin{proof}
Since $L_1(\{0,1\}^\kappa)$ has a dense subset of cardinality $\kappa$, this space does not have the $\kappa$-Daugavet property. If $\kappa=\omega$, then $\mathfrak{Dau}(L_1(\{0,1\}^\omega) = \omega$ as the measure space is atomless. So we may and do suppose that $\kappa>\omega$ and we have to prove that the $\kappa'$-Daugavet property holds for every infinite cardinal $\kappa'<\kappa$. Indeed, fix $\varepsilon>0$, $\Phi\in L_\infty\left(\{-1,1\}^\kappa\right)$ with $\|\Phi\|=1$, and a subspace $\mathcal{Z}$ of $L_1\left(\{-1,1\}^\kappa\right)$ with $\operatorname{dens}(\mathcal{Z})=\kappa'<\kappa$. It is enough to find $g \in L_1\left(\{-1,1\}^\kappa\right)$ with $\|g\|=1$,
\[
\|f+g\| \geqslant(1-\varepsilon)(\|f\|+1) \quad \text { for all } f \in \mathcal{Z} .
\]
and
\[
\int_{\{-1,1\}^\kappa} \Phi g\, d \mu>1-\varepsilon.
\]
Since every function in $L_1\left(\{-1,1\}^\kappa\right)$ depends on countably many coordinates and $\operatorname{dens}(\mathcal{Z})< \kappa$ and $\kappa>\omega$, there exists a subset $\Lambda$ of $\kappa$ with $|\Lambda|<\kappa$ such that all functions in $\mathcal{Z}$ only depend on the coordinates from $\Lambda$. Fix $n \in \mathbb{N}$ large enough that $2 \cdot 2^{-n} \leqslant \varepsilon$ and choose ordinals $\alpha_1, \ldots, \alpha_n \in \kappa \backslash \Lambda$. For each sign $\sigma \in\{-1,1\}^n$ consider the set
\[
A_\sigma:=\left\{x \in\{-1,1\}^\kappa\colon  x\left(\alpha_j\right)=\sigma(j),\, j=1, \ldots, n\right\}.
\]
This family of sets form a partition of $\{-1,1\}^\kappa$ into sets of measure $2^{-n}$. Notice that, for a function $f \in \mathcal{Z}$,

\[
\int_{A_\sigma}|f| d \mu=2^{-n} \int_{\{-1,1\}^\kappa}|f| d \mu
\]
since $f$ does not depend on the coordinates $\alpha_1, \ldots, \alpha_n$; in other words, $\left\|f \cdot \chi_{A_\sigma}\right\|=2^{-n}\|f\|$. Since $\mu$ is non-atomic and $\|\Phi\|=1$, we can find a set $B$ such that $\mu(B)<\varepsilon$ and $|\Phi(w)|>1-\varepsilon$ for every $w\in B$. 
As the sets $A_\sigma$ form a partition, we can find a sign $\sigma$ such that $\mu(A_\sigma\cap B)>0$. Observe that $\left\|f \cdot \chi_{B\cap A_\sigma}\right\|\leq 2^{-n}\|f\|$.  We can now define the function $g$ we are after. Define $g:=\frac{\operatorname{sign}(\Phi)\cdot\chi_{B\cap A_\sigma}}{\mu(B\cap A_\sigma)}$; note that $\|g\|=1$. Then, for every $f \in \mathcal{Z}$, we have:

\[
\begin{aligned}
\|f+g\| & \geqslant\left\|f \cdot \chi_{(B\cap A_\sigma)^{\mathrm{C}}}+g\right\|-\left\|f \cdot \chi_{B\cap A_\sigma}\right\| \\
& \geq\left\|f \cdot \chi_{(B\cap A_\sigma)^{\mathrm{C}}}\right\|+1-2^{-n}\|f\| \\
& \geq \left(1-2^{-n}\right)\|f\|+1-2^{-n}\|f\| \\
& \geqslant\left(1-2 \cdot 2^{-n}\right) \cdot(\|f\|+1) \geqslant(1-\varepsilon) \cdot(\|f\|+1)
\end{aligned}
\]
and
\[
\int_{\{-1,1\}^\kappa} \Phi g\, d \mu=\frac{1}{\mu(B\cap A_\sigma)}\int_{B\cap A_\sigma} \operatorname{sign}(\Phi)\cdot \Phi \, d \mu>1-\varepsilon.\qedhere
\]
\end{proof}

For atomless $L_1(\mu)$ spaces, Maharam's theorem produces a decomposition of $L_1(\mu)$ as follows:
\begin{equation}\label{eq:Maharam}
L_1(\mu)\equiv \Big(\bigoplus_{\lambda\in \Lambda}L_1(\{-1,1\}^{\kappa_\lambda})\Big)_{\ell_1},
\end{equation}
where $\kappa_\lambda\geq  \omega$ for every $\lambda$ (see e.g.\  \cite[Section 14, Theorem 9]{lacey74} and \cite[Remark on page 501]{DefantFloret}). Now, Theorem~\ref{theo: L1 has k-Daugavet} in combination with Proposition~\ref{prop: k-Daugavet in ell_1 sums} gives the computation of $\mathfrak{Dau}(L_1(\mu))$ for arbitrary measures.

\begin{corollary}
Let $(\Omega,\Sigma,\mu)$ be an atomless measure space and consider the  Maharam's decomposition 
$$
L_1(\mu)\equiv \Big(\bigoplus_{\lambda\in \Lambda}L_1(\{-1,1\}^{\kappa_\lambda})\Big)_{\ell_1}.
$$
Then,
$$
\mathfrak{Dau}(L_1(\mu)) = \min\{\kappa_\lambda\colon \lambda\in \Lambda\}
$$ 
\end{corollary}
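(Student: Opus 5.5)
The plan is to combine the Maharam decomposition \eqref{eq:Maharam} with Proposition~\ref{prop: k-Daugavet in ell_1 sums}(1) and Theorem~\ref{theo: L1 has k-Daugavet}. Since $\mu$ is atomless, \eqref{eq:Maharam} gives an isometry
$$L_1(\mu)\equiv \Big(\bigoplus_{\lambda\in \Lambda}L_1(\{-1,1\}^{\kappa_\lambda})\Big)_{\ell_1},$$
with every $\kappa_\lambda\geq\omega$. The cardinal index $\mathfrak{Dau}$ is invariant under surjective isometries, since these preserve unit spheres, slices and norms of sums. Hence it suffices to compute $\mathfrak{Dau}$ of the $\ell_1$-sum.

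By Proposition~\ref{prop: k-Daugavet in ell_1 sums}, which holds for an arbitrary index set $\Lambda$, we have
$$\mathfrak{Dau}\Big(\Big(\bigoplus_{\lambda} L_1(\{-1,1\}^{\kappa_\lambda})\Big)_{\ell_1}\Big)=\min_{\lambda\in\Lambda}\mathfrak{Dau}\big(L_1(\{-1,1\}^{\kappa_\lambda})\big).$$
Each $\kappa_\lambda$ is infinite, so Theorem~\ref{theo: L1 has k-Daugavet} gives $\mathfrak{Dau}(L_1(\{-1,1\}^{\kappa_\lambda}))=\kappa_\lambda$. Here we identify $\{0,1\}^\kappa$ with $\{-1,1\}^\kappa$ carrying the product Haar measure, as is already done in the proof of that theorem. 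Combining the two displays yields $\mathfrak{Dau}(L_1(\mu))=\min\{\kappa_\lambda\colon\lambda\in\Lambda\}$, and the minimum exists because cardinals are well ordered.

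The only point needing care is the degenerate case $\Lambda=\emptyset$, i.e.\ $L_1(\mu)=\{0\}$. There the convention that $\min\emptyset$ is undefined (or $\infty$) should be set aside; we assume $\mu$ is nontrivial so that $\Lambda\neq\emptyset$. We should also check that the statement of Proposition~\ref{prop: k-Daugavet in ell_1 sums} is applied at every cardinal $\kappa$, so that it yields the equality of indices and not just one implication. It is stated exactly in that form, so no real obstacle remains, and the proof is a direct concatenation.
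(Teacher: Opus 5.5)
Your proposal is correct and follows the paper's own route. The paper likewise transfers the problem through the Maharam isometry, uses Proposition~\ref{prop: k-Daugavet in ell_1 sums} to obtain $\mathfrak{Dau}$ of the $\ell_1$-sum as the minimum over the summands, and evaluates each summand with Theorem~\ref{theo: L1 has k-Daugavet}; your exclusion of the degenerate case $\Lambda=\emptyset$ (for instance when $L_1(\mu)=\{0\}$) is a reasonable precision that the paper leaves implicit.
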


For the perfect Daugavet property, the spaces $L_1(\mu)$ have a dramatically different behaviour. 

\begin{proposition}\label{L1failsperfect}
Let $(\Omega,\Sigma,\mu)$ be a measurable space. Then, $\mathfrak{pDau}(L_1(\mu))=1$.
\end{proposition}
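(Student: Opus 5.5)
We have to show that $L_1(\mu)$ fails the perfect Daugavet property: there is a single $y\in S_{L_1(\mu)}$ and a slice $S$ of $B_{L_1(\mu)}$ such that $\|y+x\|<2$ for every $x\in S$. (If $L_1(\mu)=\{0\}$ the unit sphere is empty and the statement is degenerate, so we assume $L_1(\mu)\neq\{0\}$.) The plan uses the elementary description of equality in the triangle inequality in $L_1$. For $x$ and $y$ in $B_{L_1(\mu)}$,
\[
\|x+y\| \le \|x\|+\|y\| \le 2,
\]
and $\|x+y\|=2$ forces $\|x\|=\|y\|=1$ and $|x+y|=|x|+|y|$ almost everywhere. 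The latter means that $x$ and $y$ have the same sign a.e.\ on the set where both are nonzero.

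The construction is then as follows. Pick a measurable set $A$ with $0<\mu(A)<\infty$, which exists because $L_1(\mu)\neq\{0\}$. Put $y=\chi_A/\mu(A)\in S_{L_1(\mu)}$, and use the functional $\Phi=-\chi_A\in S_{L_\infty(\mu)}$ to define the slice
\[
S=\Big\{x\in B_{L_1(\mu)}\colon -\int_A x\,d\mu>1-\tfrac12\Big\}.
\]
The slice is nonempty, since it contains $-y$.

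Now fix $x\in S$ and suppose, towards a contradiction, that $\|x+y\|=2$. Then $x\ge 0$ a.e.\ on $A$, because $y>0$ on $A$. This gives $-\int_A x\,d\mu\le 0$, contradicting $x\in S$. Hence $\|x+y\|<2$ for every $x\in S$. So $L_1(\mu)$ fails the $1$-perfect Daugavet property, and $\mathfrak{pDau}(L_1(\mu))=1$.

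The only point needing care is the equality case of the triangle inequality. If $\int|x+y|=\int|x|+\int|y|$, then the nonnegative function $|x|+|y|-|x+y|$ has integral zero, so it vanishes a.e. At points where $y>0$ this forces $x\ge 0$. No real obstacle is expected beyond this, and the statement needs no atomlessness assumption.
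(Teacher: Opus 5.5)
Your proof is correct, and it takes a more direct route than the paper's.

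The paper argues in three steps:
\begin{enumerate}
\item If $\mu$ has an atom, then $L_1(\mu)$ already fails the Daugavet property.
\item If $\mu$ is an atomless probability, then $\chi_\Omega$ is a smooth point, and Lemma~\ref{lemma:smoothpoint-noperfect} applies.
\item An arbitrary atomless measure is reduced to the probability case. This uses Maharam's decomposition $L_1(\mu)\equiv L_1(\nu)\oplus_1 Z$ together with the $\ell_1$-sum result, Proposition~\ref{prop: k-Daugavet in ell_1 sums}.
\end{enumerate}

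You instead take $y=\chi_A/\mu(A)$ for any set $A$ of finite positive measure. You then use the pointwise equality case of the triangle inequality, $|x+y|=|x|+|y|$ a.e., to force $x\geq 0$ on $A$. In effect this is the smooth-point argument localised to the band $L_1(\mu|_A)$. The point $y$ need not be smooth in all of $L_1(\mu)$, but it is smooth there, and the slice is built from the restriction $-\chi_A$ of its support functional. Because you work pointwise rather than with norming functionals, you avoid several things at once: the case split on atoms, Maharam's theorem, the $\ell_1$-sum stability result, and any identification of $L_1(\mu)^*$ with $L_\infty(\mu)$. Your argument therefore works verbatim for every measure space with $L_1(\mu)\neq\{0\}$. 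Your caveat about the degenerate case $L_1(\mu)=\{0\}$ is a point the paper leaves implicit.

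In exchange, the paper's route isolates a general Banach-space obstruction, Lemma~\ref{lemma:smoothpoint-noperfect}. That lemma immediately gives that no separable Banach space has the perfect Daugavet property, and the same idea applies outside $L_1$, for instance to Lipschitz-free spaces.
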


We will deduce the result from an easy general fact that has its own interest.

\begin{lemma}\label{lemma:smoothpoint-noperfect}
Let $X$ be a Banach space with the perfect Daugavet property. Then, the norm of $X$ is not smooth at any point.
\end{lemma}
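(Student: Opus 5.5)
We argue by contradiction: suppose $X$ has the perfect Daugavet property and its norm is smooth at some $y\in X$. By homogeneity we may take $y\in S_X$. Smoothness at $y$ means there is a unique $y^*\in S_{X^*}$ with $y^*(y)=1$. The plan is to use the perfect Daugavet property with the single vector $y$ and a slice on which $y^*$ is far from $1$. This will produce a norming functional for $y$ different from $y^*$.

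Concretely, take the slice
$$S:=S(B_X,-y^*,1)=\{x\in B_X\colon y^*(x)<0\},$$
which is nonempty because it contains $-y$. By the ($1$-)perfect Daugavet property there is $x\in S$ with $\|y+x\|=2$. By Hahn--Banach, choose $z^*\in S_{X^*}$ with $z^*(y+x)=2$. Since $|z^*(y)|\leq 1$ and $|z^*(x)|\leq 1$, this forces $z^*(y)=1$ and $z^*(x)=1$.

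Now $z^*$ is a norm-one functional attaining its norm at $y$, so smoothness gives $z^*=y^*$. Then $y^*(x)=z^*(x)=1$, contradicting $y^*(x)<0$ from $x\in S$. Hence the norm of $X$ is not smooth at any point. A single slice suffices and no approximation is needed, because the perfect property gives the exact equality $\|y+x\|=2$. The only point to check is that the slice is nonempty, which holds since $-y\in S$.

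For Proposition~\ref{L1failsperfect}, the lemma reduces the matter to exhibiting a smooth point of $L_1(\mu)$. If $\mu$ is nontrivial, the natural choice is a function $f$ with $\|f\|=1$ that is nonzero $\mu$-a.e. on a $\sigma$-finite piece; this needs care in the non-$\sigma$-finite case. A simpler route is to use a smooth point of a $1$-complemented $L_1$ summand together with Proposition~\ref{prop: k-Daugavet in ell_1 sums}(2), which transfers failure of the perfect Daugavet property from a summand to the whole $\ell_1$-sum.
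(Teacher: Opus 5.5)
Your proof is correct and is essentially the paper's argument. The paper also takes the slice $\{z\in B_X\colon f(z)<0\}$ determined by the unique support functional $f$ of the smooth point, obtains an element of that slice with norm-$2$ sum, and uses a norming functional of the sum to force it to equal $f$, which contradicts membership in the slice. Your closing remark on $L_1(\mu)$ also matches how the paper deduces Proposition~\ref{L1failsperfect}: it uses $\chi_\Omega$ as a smooth point for atomless probability measures, and handles the general case via the Maharam decomposition together with the $\ell_1$-sum result.
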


\begin{proof}
Suppose otherwise that $x\in S_X$ is a smooth point of the norm and that $f\in S_{X^*}$ is the unique support functional for $x$. Consider the slice $S:=\{z\in B_X\colon f(z)<0\}$ and find $y\in S$ such that $\|x+y\|=2$. If $g\in S_{X^*}$ satisfies that $g(x+y)=2$, then $g(x)=g(y)=1$, so $g=f$ and $y\notin S$, a contradiction.
\end{proof}

Observe that a byproduct of the above result is that no separable Banach space may have the perfect Daugavet property. Indeed, the norm of every separable Banach space is smooth at a dense subset (Mazur's theorem, see \cite[Theorem 8.2]{FHHMZ2011}, for instance).

\begin{corollary}
If $X$ is a Banach space with the perfect Daugavet property, then $X$ is not separable.
\end{corollary}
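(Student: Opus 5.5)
The corollary follows by combining Lemma~\ref{lemma:smoothpoint-noperfect} with Mazur's theorem on the density of smooth points. I would argue by contraposition: assume $X$ is separable and show that $X$ fails the perfect Daugavet property.

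The first step is to dispose of the trivial case $X=\{0\}$. Here $S_X=\emptyset$, so the quantification over families in $S_X$ is vacuous. We should adopt the convention, implicit throughout the paper, that $X$ is nontrivial, since otherwise the statement is degenerate. With $X\neq\{0\}$, $S_X$ is nonempty.

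Next, invoke Mazur's theorem (\cite[Theorem 8.2]{FHHMZ2011}): in a separable Banach space the set of points at which the norm is Gâteaux differentiable is a dense $G_\delta$ subset of $X$, and in particular it is nonempty. Pick such a point $x_0\neq 0$; this is possible because a dense subset of a nontrivial space contains nonzero vectors. Smoothness is invariant under positive scaling, so $x:=x_0/\|x_0\|$ is a smooth point of $S_X$, meaning it has a unique norming functional.

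Finally, Lemma~\ref{lemma:smoothpoint-noperfect} says that a space with the perfect Daugavet property has no smooth points. The existence of $x$ therefore shows that $X$ fails the perfect Daugavet property, that is, $\mathfrak{pDau}(X)=1$. Contrapositively, any $X$ with the perfect Daugavet property is nonseparable.

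No step presents a real obstacle. The only point to check is that the smoothness notion in Lemma~\ref{lemma:smoothpoint-noperfect} (uniqueness of the support functional) matches the one Mazur's theorem delivers (Gâteaux differentiability of the norm). These are equivalent for points of $S_X$.
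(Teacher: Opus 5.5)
Your proposal is correct and follows the paper's own argument: it combines Lemma~\ref{lemma:smoothpoint-noperfect} with Mazur's theorem that the norm of a separable Banach space is smooth at a dense set of points. Your extra care about the trivial space $X=\{0\}$ (which vacuously has the property) and about normalising a nonzero smooth point is a harmless refinement that the paper leaves implicit.
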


\begin{proof}[Proof of Proposition~\ref{L1failsperfect}]
The starting point is that $\mu$ has to be atomless in order that $L_1(\mu)$ has the Daugavet property. Let us first suppose that $\mu$ is an atomless probability space. Then, it is immediate that $\chi_\Omega$ is a smooth point of $L_1(\mu)$ and hence, Lemma~\ref{lemma:smoothpoint-noperfect} gives us that $L_1(\mu)$ fails the perfect DPr. In the general case, thanks to the decomposition given in \eqref{eq:Maharam}, there is an atomless probability measure $\nu$ such that $L_1(\mu)\equiv L_1(\nu)\oplus_1 Z$ for convenient closed subspace $Z$. Hence, $L_1(\mu)$ fails the perfect DPr by Proposition~\ref{prop: k-Daugavet in ell_1 sums} and the previous case.
\end{proof}

We acknowledge that Samir Hamad (private communication) has independently proved Proposition~\ref{L1failsperfect} for $\sigma$-finite measures with a similar approach. On the other hand, it is shown in the very recent preprint \cite{KAASIK2026} that no separable Lipschitz-free space has the perfect Daugavet property with an argument similar than the one in Lemma~\ref{lemma:smoothpoint-noperfect}.

Let us comment that, in contrast to the situation with $C(K)$ spaces where the $\kappa$-Daugavet and $\kappa$-perfect Daugavet are equivalent for any infinite $\kappa$, the space $L_1(\{-1,1\}^\tau)$ is an example where the $\kappa$-Daugavet property holds for all $\kappa<\tau$ (Theorem~\ref{theo: L1 has k-Daugavet}), while the $\kappa$-perfect Daugavet property fails for all $\kappa$, even for $\kappa=1$ (Proposition~\ref{L1failsperfect}). Besides, this example also shows that the perfect Daugavet property does not behave as the usual Daugavet property with respect to the duality: for every atomless localisable measure $\mu$, the space $L_\infty(\mu)$ has the perfect Daugavet property while its predual $L_1(\mu)$ fails it.

\section{Almost isometric ideals and universal disposition}\label{sect:idealsandUD}

In this section we face the problem of inheritance of the transfinite Daugavet properties by certain subspaces in a given Banach space. We borrow the following definition from \cite[Definition 4.1]{mrz25}.

\begin{definition}\label{defi:kappaideal}Let $X$ be a Banach space, $Y$ be a subspace of $X$ and $\kappa$ be an infinite cardinal. We say that $Y$ is a $\kappa$ (almost) isometric ideal in $X$ if given any subspace $E$ of $X$ with $\dens(E) <\kappa$ (and $\eps >0$), there exists an ($\eps$-)isometry $T\colon  E \longrightarrow Y$ that preserves the points of $E \cap Y$.
\end{definition}

The above notion is a generalisation of the notion of \textit{almost isometric ideal} from \cite[Definition 1.3]{aln2}, which in the above language is nothing but $\omega$ almost isometric ideals. 

Our motivation for considering the notion of $\kappa$ (almost) isometric ideals is double. On the one hand, in \cite[Proposition 4.14]{mrz25} it is proved that $\kappa$ almost isometric ideals (respectively, $\kappa$ isometric ideals) inherit the $\kappa'$ octahedrality (respectively, the $\kappa'$-rigid octahedrality) from the ambient space for every $\kappa'<\kappa$. On the other hand, in \cite[Proposition 3.8]{aln2} it is proved that the Daugavet property is inherited by almost isometric ideals. 

Motivated by the above results we get the following proposition.

\begin{proposition}\label{prop:hereaiidealargeDauga}
Let $X$ be a Banach space, let $Y$ be a subspace of $X$, and let $\kappa,\kappa'$ cardinals with $\kappa'<\kappa$ and $\kappa$ infinite. Then:
\begin{enumerate}
    \item If $X$ has the $\kappa'$-Daugavet property and $Y$ is a $\kappa$ almost isometric ideal, then $Y$ has the $\kappa'$-Daugavet property.
    \item If $X$ has the $\kappa'$-perfect Daugavet property and $Y$ is a $\kappa$ isometric ideal in $X$, then $Y$ has the $\kappa'$-perfect Daugavet property.
\end{enumerate}
\end{proposition}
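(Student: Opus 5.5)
We will verify the convex-hull characterisation of Proposition~\ref{prop:charlargeDPconvexhull} for $Y$, since it involves no slices and therefore no functionals on $Y$ that would have to be extended to $X$. For (1), fix $\{y_\alpha\colon \alpha<\kappa'\}\subseteq S_Y$, $\varepsilon>0$, a point $y_0\in B_Y$ and $\delta>0$. We must find a convex combination $\sum_{k=1}^m\lambda_k z_k$ of points $z_k\in B_Y$ with $\Vert y_\alpha - z_k\Vert>2-\varepsilon$ for all $\alpha$ and $k$, and with $\Vert y_0-\sum_k\lambda_kz_k\Vert<\delta$.

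First, apply Proposition~\ref{prop:charlargeDPconvexhull}(1) in $X$ to the same family, which lies in $S_X$, with $\varepsilon/2$. This gives $x_1,\dots,x_m\in B_X$ and convex weights $\lambda_k$ such that $\Vert y_\alpha-x_k\Vert>2-\varepsilon/2$ for all $\alpha,k$ and $\Vert y_0-\sum_k\lambda_kx_k\Vert<\delta/2$. Next, let $E:=\cspn\bigl(\{y_\alpha\colon\alpha<\kappa'\}\cup\{y_0,x_1,\dots,x_m\}\bigr)$. If $\kappa'$ is infinite, then $\dens(E)\leq\kappa'<\kappa$. If $\kappa'$ is finite, $E$ is finite dimensional, so $\dens(E)<\omega\leq\kappa$. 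In either case the $\kappa$ almost isometric ideal hypothesis supplies an $\eta$-isometry $T\colon E\to Y$ fixing $E\cap Y$, which contains every $y_\alpha$ and $y_0$. Put $z_k:=Tx_k/\max\{1,\Vert Tx_k\Vert\}$. Then $\Vert z_k-Tx_k\Vert\leq\eta$ and
\[
\Vert y_\alpha-z_k\Vert\geq\Vert T(y_\alpha-x_k)\Vert-\eta\geq(1-\eta)(2-\varepsilon/2)-\eta ,
\]
and also
\[
\Bigl\Vert y_0-\sum_k\lambda_kz_k\Bigr\Vert\leq(1+\eta)\frac{\delta}{2}+\eta .
\]
Choosing $\eta$ small enough makes the first quantity exceed $2-\varepsilon$ and the second fall below $\delta$. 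Since $y_0$ and $\delta$ were arbitrary, $B_Y$ lies in the closed convex hull of the required set, and Proposition~\ref{prop:charlargeDPconvexhull}(1) gives the $\kappa'$-Daugavet property of $Y$.

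For (2), run the same argument with $\varepsilon=0$ and an exact isometry $T$. Now $Tx_k\in B_Y$ with no renormalisation needed, $\Vert y_\alpha-Tx_k\Vert=\Vert y_\alpha-x_k\Vert=2$, and $\Vert y_0-\sum_k\lambda_kTx_k\Vert=\Vert y_0-\sum_k\lambda_kx_k\Vert<\delta$. Proposition~\ref{prop:charlargeDPconvexhull}(2) then finishes the proof.

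The main obstacle is the bookkeeping that puts $E$ within reach of the ideal hypothesis, which only applies to subspaces of density strictly below $\kappa$. This is exactly where $\kappa'<\kappa$ is needed. Having only finitely many extra vectors $x_k$, which the convex-hull formulation guarantees, keeps the density at most $\kappa'$. A slice-based approach would fail here, because it would require extending functionals from $Y$ to $X$ compatibly with $T$.
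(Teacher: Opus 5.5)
Your proposal is correct and follows essentially the same route as the paper: both verify the convex-hull characterisation of Proposition~\ref{prop:charlargeDPconvexhull} for $Y$ by first approximating $y_0$ inside $X$, and then transporting the finitely many points $x_k$ into $Y$ through the ($\eta$-)isometry $T$ that fixes $E\cap Y$. Your normalisation $z_k=Tx_k/\max\{1,\Vert Tx_k\Vert\}$ and your separate treatment of finite $\kappa'$ are slightly cleaner than the paper's bookkeeping, which divides by $\Vert T(x_i)\Vert$ and asserts $\dens(E)\leq\kappa'$ directly.
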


\begin{proof}
Let us prove first (1). In order to do so, let $\{y_\alpha\colon \alpha<\kappa'\}\subseteq S_Y$, $v_0\in B_Y$, and $\varepsilon>0$, and let us find 
\begin{equation}\label{eq:proofalmostisometricideals}
z\in \conv\bigl(\{y\in B_Y\colon  \Vert y_\alpha-y\Vert>2-\varepsilon\ \  \forall \alpha\}\bigr)
\end{equation}
with $\Vert v_0-z\Vert<\varepsilon$. This is enough in virtue of Proposition~\ref{prop:charlargeDPconvexhull}. 

Since $\{y_\alpha\colon  \alpha<\kappa'\}\subseteq S_Y\subseteq S_X$ and $v_0\in B_Y\subseteq B_X$, as $X$ has the $\kappa'$-Daugavet property, we can find an element 
$$\sum_{i=1}^n \lambda_i x_i\in \conv\bigl(\{x\in B_X\colon  \Vert y_\alpha-x\Vert>2-\tfrac{\varepsilon}{3} \ \ \forall \alpha<\kappa'\}\bigr)
$$
such that $\Vert v_0-\sum_{i=1}^n \lambda_i x_i\Vert<\tfrac{\varepsilon}{3}$.

Define $E:=\spn(\{y_\alpha\colon \alpha<\kappa'\}\cup\{x_1,\ldots, x_n\}\cup\{v_0\})\subseteq X$. Since $\dens(E)\leq \kappa'<\kappa$ and $Y$ is a $\kappa$-almost isometric ideal in $X$, we can find a bounded linear operator $T\colon E\longrightarrow Y$ with the following properties:
\begin{enumerate}
    \item $T(e)=e$ for every $e\in E\cap Y$ (in particular, $T(y_0)=y_0$ and $T(y_\alpha)=y_\alpha$ holds for every $\alpha<\kappa'$).
    \item $(1-\delta)\Vert e\Vert \leq \Vert T(e)\Vert\leq (1+\delta)\Vert e\Vert$ for every $e\in Y$,
\end{enumerate}
where $\delta>0$ is taken small enough to guarantee that $(1-\delta)(2-\frac{\varepsilon}{3})-\delta-\frac{\varepsilon}{3}>2-\varepsilon$ and that $(1+\delta)\frac{\varepsilon}{3}+\frac{\varepsilon}{3}+\delta<\varepsilon$.

Define $v_i:=\frac{T(x_i)}{\Vert T(x_i)\Vert}\in S_Y$ for every $1\leq i\leq n$. Then, on the one hand, given $1\leq i\leq n$ we get, using condition (2) on $T$ and the fact that $x_i\in \{x\in B_X\colon  \Vert y_\alpha-x\Vert>2-\frac{\varepsilon}{3}\ \forall \alpha<\kappa'\}$ that, given $\alpha<\kappa'$, we get
\begin{align*}
\Vert y_\alpha-v_i\Vert &=\left\Vert y_\alpha-\frac{T(x_i)}{\Vert T(x_i)\Vert}\right\Vert  \geq \Vert y_\alpha-T(x_i)\Vert-\left\Vert T(x_i)-\frac{T(x_i)}{\Vert T(x_i)\Vert}\right\Vert \\& =\Vert T(y_\alpha)-T(x_i)\Vert-\frac{\vert 1-\Vert T(x_i)\Vert\vert}{\Vert T(x_i)\Vert}\Vert T(x_i)\Vert\ = \Vert T(y_\alpha-x_i)\Vert-\vert 1-\Vert T(x_i)\Vert\vert\\
& \geq (1-\delta)\Vert y_\alpha-x_i\Vert -\Vert \Vert x_i\Vert-\Vert T(x_i)\Vert\vert-\varepsilon >(1-\delta)\left(2-\frac{\varepsilon}{3}\right)-\delta-\frac{\varepsilon}{3}>2-\varepsilon.
\end{align*}
This proves that $\sum_{i=1}^n \lambda_i v_i\in \conv(\{y\in B_Y\colon  \Vert y_\alpha-y\Vert>2-\varepsilon\ \forall \alpha\})$. Now, it is time to prove that $\left\Vert v_0-\sum_{i=1}^n \lambda_i v_i\right\Vert<\varepsilon$. Indeed, we get
\begin{align*}
  \left\Vert v_0-\sum_{i=1}^n \lambda_i v_i\right\Vert &=\left\Vert v_0-\sum_{i=1}^n \lambda_i \frac{T(x_i)}{\Vert T(x_i)\Vert}\right\Vert \leq \left\Vert v_0-\sum_{i=1}^n \lambda_i T(x_i) \right\Vert+\sum_{i=1}^n \lambda_i \left\Vert T(x_i)-\frac{T(x_i)}{\Vert T(x_i)\Vert} \right\Vert   \\
  & \leq \left\Vert T(v_0)-\sum_{i=1}^n \lambda_i T(x_i)\right\Vert +\sum_{i=1}^n \lambda_i\vert 1-\Vert T(x_i)\Vert\vert  <\left\Vert T\left (v_0-\sum_{i=1}^n \lambda_i x_i\right)\right\Vert+\frac{\varepsilon}{3}+\delta \\ &\leq (1+\delta)\left\Vert v_0-\sum_{i=1}^n \lambda_i x_i\right\Vert +\frac{\varepsilon}{3}+\delta <(1+\delta)\frac{\varepsilon}{3}+\frac{\varepsilon}{3}+\delta<\varepsilon.
\end{align*}
The arbitrariness of $\varepsilon>0$ concludes the proof.

For the proof of (2), we follow almost the same arguments, but the perfect variation kills $\eps$ in \eqref{eq:proofalmostisometricideals} and, as $Y$ is a $\kappa$ isometric ideal, we can now take $T$ to be an isometry, so in the definition of the $v_i$'s, there is no need to normalise the image of the operator and the estimates are even easier. The rest of the proof is completely analogous.
\end{proof}

As a consequence, with the language presented in the introduction, we get the following corollary.

\begin{corollary}
Let $X$ be a Banach space, let $Y$ be a subspace of $X$, and let $\kappa$ be an infinite cardinal. Then:
\begin{enumerate}
    \item If $\mathfrak{Dau}(X)\geq \kappa$ and $Y$ is a $\kappa$ almost isometric ideal, then $\mathfrak{Dau}(Y)\geq \kappa$.
    \item If $\mathfrak{pDau}(X)\geq \kappa$ and $Y$ is a $\kappa$ isometric ideal, then $\mathfrak{pDau}(Y)\geq \kappa$.
\end{enumerate}
\end{corollary}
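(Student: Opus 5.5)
The corollary is the cardinal-invariant translation of Proposition~\ref{prop:hereaiidealargeDauga}, and the plan is to unwind the definitions of $\mathfrak{Dau}$ and $\mathfrak{pDau}$ and apply that proposition once for every cardinal below $\kappa$.

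For (1), assume $\mathfrak{Dau}(X)\geq\kappa$ and that $Y$ is a $\kappa$ almost isometric ideal in $X$. Showing $\mathfrak{Dau}(Y)\geq\kappa$ amounts to showing that $Y$ has the $\kappa'$-Daugavet property for every cardinal $\kappa'<\kappa$. This is because $\mathfrak{Dau}(Y)$ is the least cardinal at which $Y$ fails the property, so if $Y$ does not fail it at any $\kappa'<\kappa$, that least cardinal is at least $\kappa$.

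So fix $\kappa'<\kappa$. Since $\kappa'<\kappa\leq\mathfrak{Dau}(X)$, the space $X$ has the $\kappa'$-Daugavet property, by the definition of $\mathfrak{Dau}(X)$ as the least cardinal where the property fails. Now $\kappa$ is infinite, $\kappa'<\kappa$, and $Y$ is a $\kappa$ almost isometric ideal. Proposition~\ref{prop:hereaiidealargeDauga}(1) then gives that $Y$ has the $\kappa'$-Daugavet property. As $\kappa'$ was arbitrary, $\mathfrak{Dau}(Y)\geq\kappa$.

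Part (2) is word for word the same, with $\mathfrak{pDau}$ in place of $\mathfrak{Dau}$, the $\kappa'$-perfect Daugavet property in place of the $\kappa'$-Daugavet property, and Proposition~\ref{prop:hereaiidealargeDauga}(2) in place of (1).

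There is no genuine obstacle here. The only point to check is that Proposition~\ref{prop:hereaiidealargeDauga} allows every $\kappa'<\kappa$, finite values included, which it does. The one place needing care is the direction of the inequalities: "$\mathfrak{Dau}(X)\geq\kappa$" means the property holds for all $\kappa'<\kappa$, not necessarily for $\kappa$ itself. This is exactly what matches the strict inequality $\kappa'<\kappa$ in the hypothesis of the proposition, which in turn comes from the requirement $\dens(E)<\kappa$ in Definition~\ref{defi:kappaideal}.
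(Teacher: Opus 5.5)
Your proposal is correct and is exactly the intended argument. The paper states this corollary without proof, as an immediate consequence of Proposition~\ref{prop:hereaiidealargeDauga}, and it is meant to be obtained just as you do: read $\mathfrak{Dau}(X)\geq\kappa$ (resp.\ $\mathfrak{pDau}(X)\geq\kappa$) as ``the $\kappa'$-(perfect) Daugavet property holds for every $\kappa'<\kappa$,'' then apply that proposition to each $\kappa'<\kappa$.
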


Our aim is now to take advantage of the above result in order to get examples of spaces with the $\kappa$- (perfect) Daugavet property among the class of Banach spaces of universal disposition (respectively, of almost universal disposition). Once again, our motivation is \cite[Corollary 4.5]{aln2}, where it is proved that Gurari\u{\i} spaces enjoy the Daugavet property by making use that Gurari\u{\i} spaces are almost isometric ideals in every superspace \cite[Theorem 4.3]{aln2}.

Recall that given an infinite cardinal $\kappa$, a Banach space $X$ is said to be \textit{of universal disposition} (respectively, \textit{of almost universal disposition}) \emph{for density $\kappa$} ($\text{(A)UD}_{<\kappa}$ in short) if for any two normed spaces $Y\subseteq Z$ with $\dens(Z)< \kappa$ and any isometry $t\colon Y\longrightarrow X$ there exists (for every $\varepsilon>0$) an extension $T\colon Z\longrightarrow X$ which is an isometry (respectively, such that $(1-\varepsilon)\Vert z\Vert\le \Vert T(z)\Vert\le (1+\varepsilon)\Vert z\Vert$ holds for every $z\in Z$).

Observe that for $\kappa=\omega$ we recover the strong Gurari\u{\i} and Gurari\u{\i} spaces, respectively. We refer the reader to the survey \cite{gk11} for background.

Observe also that a Banach space $X$ is a (A)UD$_{<\kappa}$ if, and only if, $X$ is a $\kappa$ (almost) isometric ideal in every Banach space that contains it \cite[Theorem 4.2]{mrz25}.

Our connection between spaces of universal disposition and transfinite Daugavet properties is described in the following result.

\begin{theorem}\label{theo:universaldisdaugalarge}
Let $X$ be a Banach space and let $\kappa$ be an infinite cardinal. Then:
\begin{enumerate}
    \item If $X$ is a UD$_{<\kappa}$ space, then $\mathfrak{pDau}(X)\geq \kappa$.
    \item Let $X$ be an AUD$_{<\kappa}$ space, then $\mathfrak{Dau}(X)\geq \kappa$.
\end{enumerate}
\end{theorem}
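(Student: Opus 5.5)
The plan is to embed $X$ isometrically into a superspace that enjoys the relevant transfinite Daugavet property for every cardinal. Then we transfer the property back to $X$ using Proposition~\ref{prop:hereaiidealargeDauga}, together with the characterisation that $X$ is (A)UD$_{<\kappa}$ if and only if $X$ is a $\kappa$ (almost) isometric ideal in every Banach space containing it \cite[Theorem 4.2]{mrz25}. To show $\mathfrak{pDau}(X)\geq\kappa$ (resp.\ $\mathfrak{Dau}(X)\geq\kappa$), we must check that $X$ has the $\kappa'$-perfect (resp.\ $\kappa'$-) Daugavet property for every $\kappa'<\kappa$. So it suffices to find, for each such $\kappa'$, a superspace $Z\supseteq X$ with the $\kappa'$-perfect Daugavet property. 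Note that the $\kappa'$-perfect Daugavet property implies the $\kappa'$-Daugavet property, so one superspace handles both items.

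For the superspace I would use a $C(K)$ space. Embed $X$ isometrically into $\ell_\infty(\Gamma)=C(\beta\Gamma)$ with $\Gamma=B_{X^*}$. Then pass to $C(\beta\Gamma\times L)$, where $L=\{0,1\}^\lambda$ and $\lambda$ is an infinite cardinal with $\lambda>\kappa'$. The embedding is $f\mapsto f\circ\pi_1$, which is isometric. Write $K=\beta\Gamma\times L$. Two facts about $K$ are needed.

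First, by Proposition~\ref{reapingofproduct} applied to $K$ written as the product $\beta\Gamma\times\prod_{i<\lambda}\{0,1\}$, we get $\mathfrak r(K)\geq\lambda>\kappa'$. (If $\beta\Gamma$ is a single point this is trivial, or we simply take $K=L$ directly.)

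Second, $K$ has no $G_\delta$-points. Indeed, a $G_\delta$-point of $K$ would give a $G_\delta$-point of the factor $\{0,1\}^\lambda$ for uncountable $\lambda$, and there are none. We may always choose $\lambda$ uncountable.

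With these two facts, Theorem~\ref{theo:CK} applies (we are in the non-exceptional case) and gives $\mathfrak{pDau}(C(K))=\mathfrak r(K)>\kappa'$. Hence $C(K)$ has the $\kappa'$-perfect Daugavet property.

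Now apply Proposition~\ref{prop:hereaiidealargeDauga}. In case (1), $X$ is a $\kappa$ isometric ideal in $C(K)$, so $X$ inherits the $\kappa'$-perfect Daugavet property. In case (2), $X$ is a $\kappa$ almost isometric ideal in $C(K)$, and $C(K)$ has the $\kappa'$-Daugavet property, so $X$ inherits the $\kappa'$-Daugavet property. Since $\kappa'<\kappa$ is arbitrary, this gives $\mathfrak{pDau}(X)\geq\kappa$ and $\mathfrak{Dau}(X)\geq\kappa$, respectively.

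The main point to verify carefully is that the chosen superspace really has the $\kappa'$-perfect property, in particular the claim that $K$ has no $G_\delta$-points. If $x=(a,b)$ were a $G_\delta$-point in $K$, then $\{b\}=\pi_2(\{a\}\times L\cap\{x\})$. More cleanly, the slice $\{a\}\times L$ is homeomorphic to $L$, and $\{x\}$ is $G_\delta$ in it, so $b$ would be $G_\delta$ in $L=\{0,1\}^\lambda$. That is impossible for uncountable $\lambda$, since a $G_\delta$ set in a product contains a basic set depending on only countably many coordinates. This is routine.

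The remaining subtlety is the order of quantifiers. The superspace is allowed to depend on $\kappa'$; this is fine because the ideal property holds in every superspace.
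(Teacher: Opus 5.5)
Your proposal is correct and follows the same route as the paper. The paper also embeds $X$ isometrically into a $C(K)$ whose compact is a product with a large Cantor cube (it uses $C(\{-1,1\}^{\kappa^+}\times (B_{X^*},w^*))$), gets the $\kappa'$-perfect Daugavet property there from Proposition~\ref{reapingofproduct} and Theorem~\ref{theo:CK}, and transfers it to $X$ via \cite[Theorem 4.2]{mrz25} and Proposition~\ref{prop:hereaiidealargeDauga}. Your explicit check that the product has no $G_\delta$-points, so that Theorem~\ref{theo:CK} is not in its exceptional case, is a detail the paper leaves implicit; it also follows at once from $\mathfrak r(K)\geq\omega_1$, since the exceptional case forces $\mathfrak r(K)=\omega$.
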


\begin{proof}
Let us prove (1), being the proof of (2) completely similar. Let $\kappa'$ be a cardinal with $\kappa'<\kappa$. Call $L:=(B_{X^*},w^*)$, and observe that we can embed $X$ in $C(L)$ isometrically by $\Phi\colon X\longrightarrow C(L)$ by the equation
$$\Phi(x)(x^*):=x^*(x).$$
Define $K:=\{-1,1\}^{\kappa^+}$, which satisfies that $\mathfrak{r}(K)\geq\kappa^+>\kappa$ by Proposition~\ref{reapingofproduct}. Consequently, $\mathfrak{r}(K\times L)\geq \kappa$ too. Now, consider the isometric embedding $\Psi\colon C(L)\longrightarrow C(K\times L)$ by the equation
$$\Psi(f)(k,l):=f(l)\ \ k\in K,\, l\in L.$$
By the composition of $\Phi$ and $\Psi$, we observe that $C(K\times L)$ contains $X$ isometrically. Now, one the one hand, $C(K\times L)$ has the $\kappa'$-perfect Daugavet property by Theorem~\ref{theo:CK} since $\mathfrak{r}(K\times L)\geq \kappa$. On the other hand, $X$ is a $\kappa$ isometric ideal in $C(K\times L)$ because $X$ is an almost isometric ideal in every Banach space containing it isometrically \cite[Theorem 4.2]{mrz25}. Applying (2) in Proposition~\ref{prop:hereaiidealargeDauga}, we conclude that $X$ inherits the $\kappa'$-perfect Daugavet property, so $\mathfrak{pDau}(X)\geq \kappa'$. Since $\kappa'<\kappa$ was arbitrary we conclude $\mathfrak{pDau}(X)\geq \kappa$.

The proof of (2) is exactly the same but using this time that $X$ is a $\kappa$ almost isometric ideal in $C(K\times L)$ and then, applying (2) in Proposition~\ref{prop:hereaiidealargeDauga}.
\end{proof}

\section{Tensor products}\label{sect:tensor}

The projective tensor product of $X$ and $Y$, denoted by $X \pten Y$, is the completion of the algebraic tensor product $X \otimes Y$ endowed with the norm
$$
\|z\|_{\pi} := \inf \left\{ \sum_{n=1}^k \|x_n\| \|y_n\|\colon  z = \sum_{n=1}^k x_n \otimes y_n \right\},$$
where the infimum is taken over all such representations of $z$. The reason for taking the completion is that $X\otimes Y$ endowed with the projective norm is complete if and only if either $X$ or $Y$ is finite dimensional (see \cite[p. 43, Exercises 2.4 and 2.5]{ryan}).

It is well known that $\|x \otimes y\|_{\pi} = \|x\| \|y\|$ for every $x \in X$, $y \in Y$, and that the closed unit ball of $X \pten Y$ is the closed convex hull of the set $B_X \otimes B_Y = \{ x \otimes y\colon  x \in B_X, y \in B_Y \}$. Throughout the paper, we will use both facts without any explicit reference.

Observe that every bilinear and continuous map  $G\in B(X\times Y)$ acts on $X \pten Y$ via
$$
G \left( \sum_{n=1}^{k} x_n \otimes y_n \right) = \sum_{n=1}^{k} G(x_n,y_n),$$
for $\sum_{n=1}^{k} x_n \otimes y_n \in X \otimes Y$. This action establishes a linear isometry from $B(X\times Y)$ onto $(X\pten Y)^*$ (see e.g.\ \cite[Theorem 2.9]{ryan}). Throughout this paper we will use the isometric identification $(X\pten Y)^*= B(X\times Y)$ without any explicit mention.

The question whether the Daugavet property is inherited by projective tensor products from its factors is one of the oldest open questions in the theory of Banach spaces with the Daugavet property which goes back, at least, to \cite[Section 6, (3)]{werner}. A relevant progress in the affirmative direction was given in \cite[Theorem 1.1]{rtv21}, where it was proved that $X\pten Y$ has the Daugavet property if both $X$ and $Y$ are $L_1$-preduals with the Daugavet property (this result was improved later in \cite[Theorem 5.4]{mr22}).

The key to proving the above mentioned \cite[Theorem 1.1]{rtv21} was to use the condition that the factors are $L_1$-preduals in order to take advantage of the fact that any compact operator taking values on an $L_1$-predual can be extended to any superspace in an almost norm-preserving way \cite[Theorem 6.1]{linds}. Because of this reason, we will get a version for the transfinite Daugavet property under the assumptions that each factor is an injective Banach space. Recall that an \emph{injective Banach space} is a Banach space $X$ with the property that if $T\colon Y\longrightarrow X$ is a bounded linear operator and $Y\subseteq Z$ then there exists a norm preserving extension of $T$ to $Z$, that is, a bounded linear operator $\hat T\colon Z\longrightarrow X$ with $\Vert\hat T\Vert=\Vert T\Vert$ and $\hat T(y)=T(y)$ for every $y\in Y$. We refer the reader to \cite[Section 4.3]{alka2006} for background around injective spaces.

Now the main result here is the following.

\begin{theorem}\label{theo:tensorprolargeDP}
Let $X$ and $Y$ be two injective Banach spaces and let $\kappa$ be an infinite cardinal. Then
\begin{enumerate}
    \item If $X$ and $Y$ have the $\kappa$-Daugavet property, then $X\pten Y$ has the $\kappa$-Daugavet property.
    \item If $X$ and $Y$ have the $\kappa$-perfect Daugavet property, then $X\pten Y$ has the $\kappa$-perfect Daugavet property.
\end{enumerate}
\end{theorem}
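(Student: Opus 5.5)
The plan is to verify the $\kappa$-Daugavet property of $X\pten Y$ through the slice formulation, adapting the scheme of \cite[Theorem 1.1]{rtv21}. Injectivity of the factors will replace the compact-operator extension results for $L_1$-preduals. Fix $\{z_\alpha\colon \alpha<\kappa\}\subseteq S_{X\pten Y}$, $\varepsilon>0$, and a slice $S=S(B_{X\pten Y},G,\eta)$ with $G\in S_{B(X\times Y)}$. Since $B_{X\pten Y}=\cconv(B_X\otimes B_Y)$, the slice contains an elementary tensor $x_0\otimes y_0$ with $x_0\in S_X$, $y_0\in S_Y$ and $G(x_0,y_0)>1-\eta$.

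Each $z_\alpha$ can be approximated by a finite sum of elementary tensors, so there are subspaces $E\subseteq X$ and $F\subseteq Y$ of density at most $\kappa$ such that every $z_\alpha$ lies within $\varepsilon$ of $E\otimes F$. Enlarge them so that $x_0\in E$ and $y_0\in F$. We look for $u\in X$, $v\in Y$ with $u\otimes v\in S$ and $\|z_\alpha+u\otimes v\|>2-\varepsilon$ for all $\alpha$.

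To bound $\|z_\alpha+u\otimes v\|$ from below, we need for each $\alpha$ a norm-one bilinear form $B_\alpha$ with $B_\alpha(z_\alpha)\approx 1$ and $B_\alpha(u,v)\approx 1$. Choose $B_\alpha$ norming $z_\alpha$, and restrict to an operator $T_\alpha\colon E\to F^{*}$, or more conveniently $T_\alpha\colon F\to E^*$. The idea is to choose $u$ almost $L$-orthogonal to $E$ using Proposition~\ref{lemma: subspace Daugavet}(1), with $u$ in the slice of $B_X$ determined by $x\mapsto G(x,y_0)$.

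Next, use injectivity to modify the form. Define a norm-one operator on $\spn(E\cup\{u\})$ that agrees with the form on $E$ and sends $u$ to a functional attaining about $1$ at a suitable $v$. The $L$-orthogonality $\|e+ru\|\geq(1-\varepsilon)(\|e\|+|r|)$ makes this extension almost norm-preserving. By injectivity of the relevant factor (using $Y$ injective, and viewing forms as operators into $Y^*$, or with the roles switched), it extends to all of $X$ with the same norm, giving a bilinear form $\tilde B_\alpha$ of norm at most $1+\varepsilon$. Symmetrically, choose $v$ almost $L$-orthogonal to $F$ and lying in the slice of $B_Y$ given by $y\mapsto G(u,y)$. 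For this to make sense, pick $u$ first with $G(u,y_0)>1-\eta$, then $v$ with $G(u,v)$ large, which requires a two-step slice choice.

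The \textbf{main obstacle} is making one single pair $(u,v)$ work simultaneously for all $\kappa$ many $z_\alpha$, with forms that are only approximately norming. The cleanest route I would attempt is to prove an $L$-orthogonality estimate directly: for every $w\in E\otimes F$,
\[\|w+u\otimes v\|_\pi\geq (1-\varepsilon)^2(\|w\|_\pi+1).\]
Take $B$ norming $w$ and define the extension on $\spn(E\cup\{u\})\times\spn(F\cup\{v\})$ by $B(e+ru,f+sv)=B(e,f)+rs$. This is well defined because $u\notin E$ and $v\notin F$. Its norm is at most about $1$ by the two $L$-orthogonality inequalities, estimated via $|B(e,f)+rs|\leq\|e\|\|f\|+|r||s|\leq(\|e\|+|r|)(\|f\|+|s|)$. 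The delicate point is extending this bilinear form to $X\times Y$ without increasing the norm.

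I would do this in two injectivity steps. First view the form as an operator from $\spn(E\cup\{u\})$ into $\spn(F\cup\{v\})^*$. Extend it in the $X$ variable using that the target dual is injective, or rather extend in the $Y$ variable by first extending each functional by Hahn--Banach. The $\pi$-norm of $w$ in $X\pten Y$ versus in the subspace tensor may differ, and this is exactly where injectivity of $X$ and $Y$ enters: it ensures forms on subspaces extend. For this I would invoke the known fact that for injective (hence complemented-in-superspace) factors, the subspace projective norms are controlled.

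For (2), the same argument applies with $\varepsilon=0$, using Proposition~\ref{lemma: subspace Daugavet}(2) to obtain exactly $L$-orthogonal $u$ and $v$. Exact norming then gives $\|z_\alpha+u\otimes v\|=2$, after reducing to $z_\alpha$ in the closure of $E\otimes F$ by density.
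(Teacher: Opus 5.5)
Your overall architecture is the right one: choose $u$ in the slice $x\mapsto G(x,y_0)$, (almost) $L$-orthogonal to $E$; then choose $v$ in the slice $y\mapsto G(u,y)$, (almost) $L$-orthogonal to $F$; then certify $\|z_\alpha+u\otimes v\|$ with a bilinear form. But the step you flag as delicate is a genuine gap, and none of the mechanisms you offer for it works.

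\emph{Operators into $Y^*$.} Extending an operator $\spn(E\cup\{u\})\to Y^*$ (or into $\spn(F\cup\{v\})^*$) to all of $X$ is an extension problem with target $Y^*$. Injectivity of $Y$ says nothing about $Y^*$: an infinite-dimensional injective $Y$ is a $C(K)$ with $K$ extremally disconnected, and its dual is an $L$-space, which is not injective.

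\emph{Hahn--Banach functional by functional.} This choice is not linear. A norm-preserving linear choice would need something like $\spn(F\cup\{v\})$ being an ideal in $Y$, and subspaces of injective spaces need not be ideals (e.g.\ $\ell_2\subseteq\ell_\infty$).

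\emph{Control of subspace projective norms.} Injectivity makes $X$ complemented in its superspaces, not subspaces of $X$ complemented in $X$. The projective norm is not controlled on subspaces even for injective factors. For an isometric copy of $\ell_2^n$ in $\ell_\infty$, Grothendieck's inequality shows that the trace form, which has norm $1$ on $\ell_2^n\times\ell_2^n$, has no extension to $\ell_\infty\times\ell_\infty$ of norm below $n/K_G$. Nothing in your argument explains why your particular form $\beta(e+ru,f+sv)=B(e,f)+rs$, with vanishing cross terms, should extend.

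The missing idea is to use injectivity where it actually applies, namely for operators \emph{into} $X$ and $Y$, and never to extend a bilinear form at all. Keep the norming form $B_\alpha$ on $X\times Y$ and pick $a\in S_X$, $b\in S_Y$ with $B_\alpha(a,b)>1-\varepsilon$. Define $\phi(e+ru)=e+ra$ on $\spn(E\cup\{u\})$. The $L$-orthogonality gives $\|e+ra\|\leq\|e\|+|r|\leq(1-\delta)^{-1}\|e+ru\|$, where $\delta$ is the orthogonality defect ($\delta=0$ in the perfect case). So injectivity of $X$ extends $\phi$ to $\Phi\colon X\to X$ with the same norm. Similarly, $\Psi\colon Y\to Y$ fixes $F$ and sends $v$ to $b$. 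Then $\tilde B(x,y):=B_\alpha(\Phi x,\Psi y)$ has the following properties:
\begin{itemize}
\item its norm is at most $(1-\delta)^{-2}$;
\item it agrees with $B_\alpha$ on $E\times F$, hence on $\overline{E\otimes F}\ni z_\alpha$;
\item it satisfies $\tilde B(u\otimes v)=B_\alpha(a,b)$.
\end{itemize}
Hence $\|z_\alpha+u\otimes v\|>(1-\delta)^2(2-\varepsilon)$. In the perfect case $\varepsilon>0$ is arbitrary, because $a,b$ are chosen after $u,v$, which gives the value $2$.

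The cross terms $B_\alpha(e,b)$ and $B_\alpha(a,f)$ are in general nonzero, which is harmless. Your middle paragraph came close to this. If the new functional is taken to be $B_\alpha(a,\cdot)$, then the operator $\spn(E\cup\{u\})\to Y^*$ is $x\mapsto B_\alpha(\phi(x),\cdot)$, and it is the $X$-valued map $\phi$ that one extends. Since $u,v$ depend only on $E$, $F$ and $G$, while only $a,b,\Phi,\Psi$ depend on $\alpha$, this also resolves your `main obstacle' of handling all $\kappa$ many $z_\alpha$ at once.
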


\begin{proof}
Let us prove (2), being the proof of (1) completely analogous. To do so, select $\{z_\alpha\colon  \alpha<\kappa\}\subseteq S_{X\pten Y}$ and a slice $S$ of $B_{X\pten Y}$, and let us find $x_0\in S_X$ and $y_0\in S_Y$ such that $x_0\otimes y_0\in S$ and
$$\Vert z_\alpha+x_0\otimes y_0\Vert=2$$
holds for every $\alpha<\kappa$. 

Indeed, write $S=S(B_{X\pten Y}, G,\delta)$ for certain norm-one bilinear map $G\colon X\times Y\longrightarrow \mathbb R$ and $\alpha>0$. Select $u_0\otimes v_0\in S$, which means $G(u_0,v_0)>1-\delta$. 
Given $\alpha<\kappa$ find a bilinear bounded map $B_\alpha$ with $\Vert B_\alpha\Vert=1$ and $B_\alpha(z_\alpha)=\Vert z_\alpha\Vert=1$.
In view of \cite[Proposition 2.8]{ryan} we can find, for every $\alpha<\kappa$, two bounded sequences $(x_n^\alpha)\subseteq X$ and $(y_n^\alpha)\subseteq Y$ such that
$$z_\alpha:=\sum_{n=1}^\infty x_n^\alpha\otimes y_n^\alpha.$$
Define $V:=\cspn\{x_n^\alpha\colon  \alpha<\kappa, n\in\mathbb N\}\subseteq X$ and $W:=\cspn\{y_n^\alpha\colon  \alpha<\kappa, n\in\mathbb N\}\subseteq Y$.

Now, the set $\{x\in B_X\colon  G(x,v_0)>1-\delta\}$ is a slice of $B_X$. Since $X$ has the $\kappa$-perfect Daugavet property, we can find by Proposition~\ref{lemma: subspace Daugavet} an element $x_0\in \{x\in B_X\colon  G(x,v_0)>1-\delta\}$ such that
$$\Vert v+\lambda x_0\Vert=\Vert v\Vert+\vert\lambda\vert$$
holds for every $v\in V$ and every $\lambda\in\mathbb R$. In a similar way, we can find $y_0\in S_Y$ such that $G(x_0,y_0)>1-\delta$ (that is, $x_0\otimes y_0\in S$) and 
$$\Vert w+\lambda y_0\Vert=\Vert w\Vert+\vert\lambda\vert$$
holds for every $w\in W$ and every $\lambda\in\mathbb R$.

We claim that, for every $\alpha<\kappa$, we have
$$\Vert z_\alpha+x_0\otimes y_0\Vert=2.$$
In order to prove it, select $\alpha<\kappa$ and $\varepsilon>0$. Since $\Vert B_\alpha\Vert=1$ we can find $a\in S_X$ and $b\in S_Y$ such that $B_\alpha(a,b)>1-\varepsilon$. Define $\phi\colon V\oplus\mathbb R x_0\longrightarrow X$ by the equation
$$\phi(v+\lambda x_0):=v+\lambda a.$$
We have that $\Vert \phi\Vert\leq 1$, just observe that given $v\in V$ and $\lambda\in\mathbb R$, 
\begin{align*}
\Vert \phi(v+\lambda x_0)\Vert =\Vert v+\lambda a\Vert\leq \Vert v\Vert+\vert \lambda\vert=\Vert v+\lambda x_0\Vert.
\end{align*}
By the injectivity of $X$, we can extend $\phi$ in a norm-preserving way to $\Phi\colon X\longrightarrow X$. In a similar way, we can construct an operator $\Psi\colon Y\longrightarrow Y$ such that $\Psi(w)=w$ holds for every $w\in W$ and $\Psi(y_0)=b$. 
Next, consider $\tilde B(x,y):=B_\alpha(\Phi(x),\Psi(y))$, which is a norm-one bilinear mapping since $\Phi$, $\Psi$ and $B_\alpha$ have norm one. Consequently, we get
\begin{align*}
\Vert z_\alpha+x_0\otimes y_0\Vert &\geq \tilde B(z_\alpha)+\tilde B(x_0\otimes y_0) = \tilde B_\alpha \left(\sum_{n=1}^\infty x_n^\alpha\otimes y_n^\alpha \right) +\tilde B(x_0\otimes y_0)\\
& =\sum_{n=1}^\infty B_\alpha(\Phi(x_n^\alpha),\Psi(y_n^\alpha))+B_\alpha(\Phi(x_0),\Psi(y_0)) >\sum_{n=1}^\infty B_\alpha(x_n^\alpha,y_n^\alpha)+1-\varepsilon\\
& =B_\alpha\left(\sum_{n=1}^\infty x_n^\alpha\otimes y_n^\alpha \right)+1-\varepsilon = B_\alpha(z_\alpha)+1-\varepsilon=2-\varepsilon.
\end{align*}
Since $\varepsilon$ was arbitrary, we conclude that $\Vert z_\alpha+x_0\otimes y_0\Vert=2$. 
\end{proof}

Some comments are pertinent:

\begin{remark}\label{remark:citainjekappa}
Examples of $\kappa$-perfect Daugavet spaces which are injective are exhibited in Remark~\ref{remark:C(K)injlargdaugavet}.
\end{remark}

\begin{remark}
 An inspection in the proof of Theorem~\ref{theo:tensorprolargeDP} reveals that, since we extend operators $\varphi$ defined on subspaces of density $\kappa$, it is enough to assume that $X$ and $Y$ are just $\kappa^+$ universally injective Banach spaces. Let us introduce necessary definition. Given a Banach space $X$ and an infinite cardinal $\aleph$, we say that $X$ is
\begin{enumerate}
\item \textit{$\aleph$ injective} if for any two normed spaces $Y\subseteq Z$ with $\dens(Z)< \aleph$ and any bounded linear operator $t\colon Y\longrightarrow X$ there exists a norm-preserving extension $T\colon Z\longrightarrow X$.
\item \textit{$\aleph$ universally injective} if for any two normed spaces $Y\subseteq Z$ with $\dens(Y)< \aleph$ and any bounded linear operator $t\colon Y\longrightarrow X$ there exists a norm-preserving extension $T\colon Z\longrightarrow X$.
\end{enumerate}
It is immediate that $\aleph$ universal injectivity implies $\aleph$ injectivity. The converse does hold true under GCH. We refer the interested reader to \cite{accgm15} for background.

Taking this into account, if $X$ and $Y$ are UD$_{<\kappa}$ spaces, then they enjoy the $\kappa'$-perfect Daugavet property for every $\kappa'<\kappa$ by Theorem~\ref{theo:universaldisdaugalarge}. Moreover, $X$ and $Y$ are $\kappa$ injective Banach spaces (c.f.\ \cite[Section 5.5.2]{sepibook} or \cite[Theorems 3.3 and 4.8]{mrz25}). Under GCH, we can conclude that $X$ and $Y$ are $\kappa$ universally injective Banach spaces \cite[Proposition 5.13]{sepibook}, and then $X\pten Y$ has the $\kappa'$-perfect Daugavet property.
\end{remark}

\section{Spaces of Lipschitz functions}\label{sect:Lipschitz}

A pointed metric space $(M, d, 0)$
is a metric space $(M, d)$ with a selected distinguished point $0$ in $M$, called the base point. For a pointed metric space $M$, we consider the Banach space $\Lip(M)$ formed by all real-valued Lipschitz functions that vanish at $0$, endowed with the norm given by the least Lipschitz constant, that is,
$$
\|f\|=\inf\{C>0\colon |f(x)-f(y)|\leq Cd(x,y)\;,\;\forall x,y\in M\}
$$
for every $f\in \Lip(M)$. 

We present now some definitions regarding metric spaces which are related to the fact that the spaces of Lipschitz functions have the Daugavet property. A metric space $(M,d)$ is a \textit{length space} for every $x,y\in M$, 
$d(x,y)$ is equal to the infimum of the lengths of the rectifiable curves joining $x$ and $y$ (in particular, $M$ is arc-connected).
Borrowing notation from \cite{ikw}, given $f\in S_{\Lip(M)}$ and $\varepsilon>0$, a point $m\in M$ is said to be an \textit{$\varepsilon$-point of $f$} if
$$\inf_{r>0}\|f_{|B(m,r)}\|>1-\ep.$$
Also, a metric space is said to be \textit{spreadingly local} whenever the set of $\varepsilon$-points of $f$ is infinite for every $\ep>0$ and every $f\in\Lip(M)$. Following the proof of Proposition 2.3 in \cite{ikw}, it is easy to see that every length space is spreadingly local, regardless it is complete or not. For complete metric spaces, the converse is also true \cite{gpr18}. 

With the above notation in mind, in \cite[Theorem 5.2]{cll23} it is proved that the following assertions are equivalent for a complete metric space $M$:
\begin{enumerate}
    \item $\Lip(M)$ has the Daugavet property,
    \item $\Lip(M)$ is $\omega$-rigid octahedral,
    \item $M$ is length.
\end{enumerate}
Let us remark that the equivalence between (1) and (3) already appeared in \cite[Theorem 3.5]{gpr18}.

The equivalence between (1) and (2) above, motivates us to get the following result.

\begin{theorem}\label{theo:Liperfe}
Let $M$ be a complete length metric space. Then $\Lip(M)$ has the $\omega$-perfect Daugavet property.
\end{theorem}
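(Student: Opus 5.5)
The plan is to verify the net characterisation of Corollary~\ref{cor:weaknetdauga}(2) with $\kappa=\omega$. Fix a countable family $\{f_n\colon n<\omega\}\subseteq S_{\Lip(M)}$ and $g\in B_{\Lip(M)}$. We must produce a net (a sequence should suffice) $(h_k)$ in $B_{\Lip(M)}$ converging weakly to $g$ with $\Vert f_n+h_k\Vert=2$ for all $n,k$. Since the weak topology on bounded sets of $\Lip(M)$ is hard to control, I would aim instead for relatively weakly open sets directly. By Proposition~\ref{lemma: perfect Daugavet and CCS}, it is enough to find, inside any finite convex combination of slices, an element $L$-orthogonal to all $f_n$. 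Following the proof of Lemma~\ref{lemma: Daugavet and CCS}, it then suffices to handle a single slice $S=S(B_{\Lip(M)},\mu,\delta)$ together with the closed span of countably many $f_n$'s. The iteration over the slices only adds countably many new functions at each step, and $\Vert\cdot\Vert=\Vert\cdot\Vert+|\cdot|$ is preserved exactly along the iteration.

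For a single slice, the standard Daugavet argument (as in \cite{ikw,gpr18}) modifies an almost-norming function $g\in S$ on a tiny region near an $\varepsilon$-point so that the Lipschitz norm is unchanged and the slice functional barely moves. The perfect version requires attaining $2$ exactly, so I would use infinitely many local modifications with errors $\varepsilon_j\to0$.

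Concretely, enumerate the pairs $(n,j)$ as a single sequence. Since $M$ is length, hence spreadingly local, each $f_n$ has infinitely many $\varepsilon_j$-points. Inductively choose pairwise far-apart points $p_{n,j}$ and pairs $x_{n,j},y_{n,j}$ in tiny balls $B(p_{n,j},r_{n,j})$ with
\[
f_n(x_{n,j})-f_n(y_{n,j})>(1-\varepsilon_j)\,d(x_{n,j},y_{n,j}).
\]
Define $h$ to equal $g$ away from the balls. On each ball, $h$ is redefined so that
\[
h(x_{n,j})-h(y_{n,j})=d(x_{n,j},y_{n,j}),
\]
using a geodesic-like approximate path (available by the length property) and a McShane-type gluing that keeps $\Vert h\Vert\le1$. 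Then
\[
\Vert f_n+h\Vert\ge (2-\varepsilon_j)\quad\text{for all } j,
\]
so $\Vert f_n+h\Vert=2$. Membership in the slice is arranged by writing $\mu\in\Lip(M)^*$ and approximating it by a finitely supported molecule combination, using the fact that the sum of the perturbations is supported on radii $r_{n,j}$ chosen summably small. This keeps $h$ close to $g$ in the sense needed for the slice.

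The main obstacle is twofold. First, the gluing must keep the global Lipschitz constant at most $1$ while $h$ has slope exactly $1$ on small pairs near points where $g$ may already have slope close to $1$. I would try the construction of \cite[Theorem 3.5]{gpr18}: first shrink $g$ to $(1-\eta)g$, so it stays in the slice, and then use the slack $\eta$ to insert the bumps. Second, the functional $\mu$ is a general element of $\Lip(M)^*$ rather than a molecule, so smallness of the perturbation does not automatically imply slice membership. I would handle this by replacing $S$ with a weakly open set defined by finitely many evaluation-type functionals near $g$, relying on Proposition~\ref{lemma: perfect Daugavet and CCS} together with a Bourgain-type reduction. The local bumps are then chosen away from the finitely many relevant points and with norm contribution tending to $0$ on the relevant molecules.
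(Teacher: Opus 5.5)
\textbf{The gap: membership in the slice.} Your reductions only bring you back to the definition: a single slice plus a separable subspace, as in Proposition~\ref{lemma: subspace Daugavet}(2). So you must produce $h\in S(B_{\Lip(M)},\mu,\delta)$ for an \emph{arbitrary} $\mu\in\Lip(M)^*$.

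Perturbing $g$ on tiny balls makes $h-g$ small only in the sup norm. That means $h$ is close to $g$ pointwise, i.e.\ $w^*$-close as a functional on $\mathcal F(M)$. The norm $\|h-g\|$ need not be small at all; it is close to $2$ if $g$ has slope close to $-1$ on the pairs you use.

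Neither of your two fixes works. Finitely supported combinations of molecules are norm dense in $\mathcal F(M)$, not in $\Lip(M)^*$, so they cannot approximate $\mu$. Replacing $S$ by a set defined by finitely many evaluation functionals only proves a $w^*$-version of the property, because such a set is relatively $w^*$-open. Nothing lets you exchange a slice given by $\mu$ for one given by functionals of your choice. In particular, Bourgain's lemma goes from weakly open sets to convex combinations of slices, not in that direction.

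The idea you are missing is the one the paper uses to make $\mu$ irrelevant. By Proposition~\ref{prop:charlargeDPconvexhull}(2), it suffices to approximate $g$ \emph{in norm} by averages $\frac1p\sum_{i=1}^p h_i$, with $h_i\in B_{\Lip(M)}$ and $\|f_n+h_i\|=2$ for all $n$. Then every slice of the ball meets the set of such $h$'s, whatever the functional.

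The paper takes $h_i=g_i/(1+2\varepsilon)$, where $g_i$ differs from $g$ only on a set $U_i$, with $U_1,\ldots,U_p$ pairwise disjoint. For $u\neq v$, at most two indices then contribute to the difference quotient of $g-\frac1p\sum_i g_i$, which gives $\|g-\frac1p\sum_i h_i\|<6/p$. Your first plan via Corollary~\ref{cor:weaknetdauga}(2) would run on the same mechanism. If the sets $\{\phi_k\neq0\}$ are pairwise disjoint, then $\|\sum_k a_k\phi_k\|\leq 2\sup_k|a_k|\,\|\phi_k\|$, so bounded disjointly supported sequences in $\Lip(M)$ are weakly null.

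\textbf{A second problem, shared with the paper.} Both your sketch and the paper's proof use pairwise disjoint tiny balls, one for each pair $(n,j)$, each containing a pair on which $f_n$ has slope $>1-\varepsilon_j$. This localisation is not automatic and can fail.

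Take $M=[0,1]$ with base point $0$, let $(q_n)$ enumerate $\mathbb Q\cap[0,1]$, and let $f_n(z)=\int_0^z(1-|s-q_n|)\,ds\in S_{\Lip(M)}$. Any pair on which $f_n$ has slope $>1-\varepsilon$ lies within $4\varepsilon$ of $q_n$. So the points you pick form a dense subset of $[0,1]$, and no dense subset of $[0,1]$ admits pairwise disjoint open neighbourhoods of its points.

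It is worse than that. For $g=0$, every $h\in B_{\Lip(M)}$ with $\|f_n+h\|=2$ for all $n$ has $\{h\neq0\}$ open and dense. Indeed, since $f_n'\geq0$, the equality forces $h'>1-\delta$ on a set of positive measure inside $(q_n-\delta,q_n+\delta)$ for each $\delta>0$, whereas $h'=0$ a.e.\ on $\{h=0\}$. Hence no two such functions can differ from $g$ on disjoint sets. The disjoint-support averaging written in the paper therefore cannot be carried out in this example.

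The statement itself does hold there: $\Lip([0,1])\equiv L_\infty[0,1]$ has the $\omega$-perfect Daugavet property by Section~\ref{section:reapingnu}. So besides the norm-approximation idea, the localisation step needs a genuinely different argument. Perturbations must be allowed to overlap pointwise while still satisfying an averaged difference-quotient estimate.
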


\begin{proof}
Fix $\{f_n\colon n\in\mathbb N\}\subseteq S_{\Lip(M)}$ and $g\in B_{\Lip(M)}$. Our aim is to find elements in the set $$\conv\bigl(\{f\in S_{\Lip(M)}\colon  \Vert f_n+f\Vert=2\ \forall n\in\mathbb N\}\bigr)$$ which are close to $g$. Let $p\in\mathbb N$ and choose $\varepsilon>0$ small enough to guarantee that $\varepsilon<\frac{1}{p}$.  Observe that, for every $n\in\mathbb N$ and any $k\in\mathbb N$, the set of $\frac{1}{k}$-points of $f_n$ is infinite. Take a bijection $\phi\colon \mathbb N^2\longrightarrow \mathbb N$. 

By an inductive argument, we can find a sequence of distinct points $\{t_{n,k}^i\colon  n\in\mathbb N, 1\leq i\leq p\}\setminus\{0\}$ such that each $t_{n,k}^i$ is a $\frac{1}{k}$-point. By an inductive argument again, we can find a family of pairwise disjoint open sets $\{W_{n,k}^i\colon  n,k\in\mathbb N,1\leq i\leq p\}$ with $t_{n,k}^i\in W_{n,k}^i$ and such that $0\notin W_{n,k}^i$ for every $n,k\in\mathbb N$, $1\leq i\leq p$. By the definition of $\varepsilon$-point, for every $n,k\in\mathbb N$ and $1\leq i\leq p$ we can find $r_{n,k}^i>0$ and a pair of distinct points $x_{n,k}^i, y_{n,k}^i$ with the following conditions:
\begin{enumerate}
    \item $B(x_{n,k}^i,r_{n,k}^i)\subseteq W_{n,k}^i$ (in particular the above balls are pairwise disjoint).
    \item $\frac{f_n(y_{n,k}^i)-f_n(x_{n,k}^i)}{d(x_{n,k}^i,y_{n,k}^i)}>1-\frac{1}{k}$ holds for every $n,k\in\mathbb N$ and every $1\leq i\leq p$,
    \item $2(1+\varepsilon)\frac{d(x_{n,k}^i,y_{n,k}^i)}{r_{n,k}^i-d(x_{n,k}^i,y_{n,k}^i)}<\varepsilon$ holds for every $n,k\in\mathbb N$ and every $1\leq i\leq p$ and,
    \item $2(1+\varepsilon)\frac{d(x_{m,j}^i,y_{m,j}^i)+d(x_{n,k}^i,y_{n,k}^i)}{r_{m,j}^i-d(x_{m,j}^i,y_{m,j}^i)}<\varepsilon$ holds for all $n,m,j,k\in\mathbb N$ such that $\phi(m,j)<\phi(n,k)$. 
\end{enumerate}
Now, given $1\leq i\leq p$, we define a function $g_i\colon M\setminus\bigcup\limits_{n,k\in\mathbb N} B(x_{n,k}^i,r_{n,k}^i)\cup \{x_{n,k}^i,y_{n,k}^i\}\longrightarrow \mathbb R$ by $g_i=g$ on $M\setminus\bigcup\limits_{n\in\mathbb N} B(x_n^i,r_n^i)$ and $g_i(x_{n,k}^i)=g(x_{n,k}^i)$. Define also $g_i(y_{n,k}^i)=g(x_{n,k}^i)+(1+2\varepsilon)d(x_{n,k}^i,y_{n,k}^i)$. 

Fix $1\leq i\leq p$ and let us estimate the Lipschitz norm of $g_i$. In order to do so, select $u\neq v\in M\setminus\bigcup\limits_{n\in\mathbb N} B(x_{n,k}^i,r_{n,k}^i)\cup \{x_{n,k}^i,y_{n,k}^i\}$, and let us estimate $A:=\frac{g_i(u)-g_i(v)}{d(u,v)}$. We have three possibilities:
\begin{enumerate}
    \item If $\{u,v\}\cap \{y_{n,k}^i\colon  n,k\in\mathbb N\}=\emptyset$ we have $g_i(u)=g(u)$ and $g_i(v)=g(v)$, from where $A\leq 1$.
    \item If $u=y_{n,k}^i$ for some $n\in\mathbb N$ and $v\neq y_{n,k}^i$ for every $n,k\in\mathbb N$ then $g_i(v)=g(v)$, whereas $g_i(y_{n,k}^i)=g(x_{n,k}^i)+(1+2\varepsilon)d(x_{n,k}^i,y_{n,k}^i)$. If $v=x_{n,k}^i$ then it is immediate that $A=1+2\varepsilon$. In other case we have
    \begin{align*}   
    A &=\frac{g(x_{n,k}^i)+(1+2\varepsilon)d(x_{n,k}^i,y_{n,k}^i)-g(v)}{d(y_{n,k}^i,v)} \leq \frac{d(x_{n,k}^i,v)+(1+2\varepsilon)d(x_{n,k}^i,y_{n,k}^i)}{d(y_{n,k}^i,v)}\\& \leq 1+(2+2\varepsilon)\frac{ d(x_{n,k}^i,y_{n,k}^i)}{r_{n,k}^i-d(x_{n,k}^i,y_{n,k}^i)} \leq 1+\varepsilon,
    \end{align*}
    where we have used that $d(v,x_n^i)\geq r_n^i$ as $v\notin B(x_n^i,r_n^i)$. 
    \item If $u=y_{n,k}^i$ and $v=y_{m,j}^i$ for $(n,k)\neq (m,j)$. In such case either $\phi(m,j)<\phi(n,k)$ or $\phi(n,k)<\phi(m,j)$. Assume, up to relabeling, that $\phi(m,j)<\phi(n,k)$. Then
    \begin{align*}   
        A& =\frac{g(x_{n,k}^i)+(1+2\varepsilon)d(x_{n,k}^i,y_{n,k}^i)-g(x_{m,j}^i)-(1+2\varepsilon)d(x_{m,j}^i,y_{m,j}^i)}{d(y_n^i,y_m^i)}\\
         &\leq \frac{d(x_{n,k}^i,x_{m,j}^i)+(1+2\varepsilon)(d(x_{n,k}^i,y_{n,k}^i)+d(x_{m,j}^i,y_{m,j}^i))}{d(y_{n,k}^i,y_{m,j}^i)}\\
        & =1+2(1+\varepsilon)\frac{d(x_{n,k}^i,y_{n,k}^i)+d(x_{m,j}^i,y_{m,j}^i)}{d(y_{n,k}^i,y_{m,j}^i)}\\
        & \leq 1+2(1+\varepsilon)\frac{d(x_{n,k}^i,y_{n,k}^i)+d(x_{m,j}^i,y_{m,j}^i)}{d(x_{m,j}^i,y_{n,j}^i)-d(x_{m,j}^i,y_{m,j}^i)}\\
        & \leq  1+2(1+\varepsilon)\frac{d(x_{n,k}^i,y_{n,k}^i)+d(x_{m,j}^i,y_{m,j}^i)}{r_{m,j}^i-d(x_{m,j}^i,y_{m,j}^i)}\leq 1+\varepsilon.
    \end{align*}
\end{enumerate}
It follows that the Lipschitz constant of $g_i$ is $1+2\varepsilon$. Now,  McShane extension theorem \cite[Theorem 1.33]{weaver18} provides an extension of $g_i$ to the whole $M$, which we still call $g_i$. Observe that $\supp (g_i-g)\subseteq \bigcup\limits_{n\in\mathbb N} B(x_{n,k}^i,r_{n,k}^i)$. Write $h_i:=\frac{g_i}{(1+2\varepsilon)}\in B_{\Lip(M)}$. Given $n\in\mathbb N$, let us prove that $\Vert f_n+h_i\Vert=2$. Indeed, for every $k\in\mathbb N$ we have that
$$\Vert f_n+h_i\Vert\geq \frac{f_n(y_{n,k}^i)-f_n(x_{n,k}^i)}{d(y_n^i,x_n^i)}+\frac{1}{1+2\varepsilon}\frac{g_i(y_{n,k}^i)-g_i(x_{n,k}^i)}{d(y_n^i,x_n^i)}>1-\frac{1}{k} +1=2-\frac{1}{k}.$$
Hence, $\Vert f_n+f\Vert=2$ by the arbitrariness of $k\in \N$.
It remains to prove that $\frac{1}{p}\sum_{i=1}^p h_i$ is close to $g$ in norm. In order to do so, select $u,v\in M, u\neq v$. We have
$$\left\vert \frac{g(u)-\frac{1}{p}g_i(u)-g(v)-\frac{1}{p}\sum_{i=1}^p g_i(v)}{d(u,v)}\right\vert =\frac{1}{p}\sum_{i=1}^p\frac{\vert g(u)-g_i(u)-( g(v)-g_i(v))\vert}{d(u,v)}.$$
Since the sets $\supp(g-g_i)$ are pairwise disjoint, there is at most one index $1\leq i_u\leq p$ such that $g(u)-g_{i_u}(u)\neq 0$. Similarly, there is at most one index $1\leq i_v\leq p$ such that $g(u)-g_{i_v}(v)\neq 0$. Finally, given $i\in\{i_u,i_v\}$, we have
$$\vert g(u)-g_i(u)-(g(v)-g_i(v))\vert\leq \vert g(u)-g(v)\vert +\vert g_i(u)-g_i(v)\vert\leq 2d(u,v).$$
Consequently, 
$$\frac{1}{p}\sum_{i=1}^p\frac{\vert g(u)-g_i(u)-( g(v)-g_i(v))\vert}{d(u,v)}\leq \frac{2}{p}\frac{2d(u,v)}{d(u,v)}=\frac{4}{p}.$$
Since $u,v\in M$ with $u\neq v$ were arbitrary, we infer that  
$$\left\Vert g-\frac{1}{p}\sum_{i=1}^p g_i \right\Vert\leq \frac{4}{p}.$$
Now, since 
$$\Vert h_i-g_i\Vert=\left\vert \frac{1}{1+2\varepsilon}-1\right\vert \Vert g_i\Vert=\frac{\Vert g_i\Vert}{1+\varepsilon}2\varepsilon\leq 2\varepsilon,$$
we deduce that
$$\left\Vert g-\frac{1}{p}\sum_{i=1}^p h_i \right\Vert\leq \left\Vert g-\frac{1}{p}\sum_{i=1}^p g_i \right\Vert+\frac{1}{p}\sum_{i=1}^p \Vert g_i-h_i\Vert <\frac{4}{p}+2\varepsilon<\frac{6}{p}.$$
The arbitrariness of $p\in \N$ and $g\in B_{\Lip(M)}$ implies that $$B_{\Lip(M)}=\overline{\conv}\bigl( \{ f\in B_{\Lip(M)}\colon  \Vert f_n+f\Vert=2\  \forall n\in\mathbb N\}\bigr),$$ from where we conclude that $\Lip(M)$ has the $\omega$-perfect Daugavet property by Proposition~\ref{prop:charlargeDPconvexhull}.
\end{proof}

The above theorem allows to obtain an improvement of \cite[Theorem 5.1]{cll23} and \cite[Theorem 3.5]{gpr18} in the following lines.

\begin{corollary}\label{coro:Lipcharlength}
Let $M$ be a complete metric space. Then, the following assertions are equivalent:
\begin{enumerate}
    \item $\Lip(M)$ has the $\omega$-perfect Daugavet property.
    \item $\Lip(M)$ has the Daugavet property.
    \item $\Lip(M)$ is $\omega$-rigid octahedral,
    \item $M$ is length.
\end{enumerate}
\end{corollary}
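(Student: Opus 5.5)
We plan to close the cycle of implications $(1)\Rightarrow(2)\Rightarrow(4)\Rightarrow(1)$, and then fit $(3)$ in through $(1)\Rightarrow(3)\Rightarrow(2)$.

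The implication $(4)\Rightarrow(1)$ is exactly Theorem~\ref{theo:Liperfe}. For $(1)\Rightarrow(2)$, the $\omega$-perfect Daugavet property trivially gives the $1$-perfect Daugavet property, because any single vector $y\in S_X$ can be viewed as a family indexed by $\omega$. For every slice $S$ it then produces $x\in S$ with $\Vert y+x\Vert=2>2-\varepsilon$. By the slice reformulation recalled in the introduction (see \cite[Theorem 3.1.5]{kmrzw25}), this is the Daugavet property. The implication $(2)\Rightarrow(4)$ is the known direction of \cite[Theorem 3.5]{gpr18}, equivalently \cite[Theorem 5.2]{cll23}, which was recalled just before Theorem~\ref{theo:Liperfe}: for a complete metric space $M$, the Daugavet property of $\Lip(M)$ forces $M$ to be length.

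Next we handle $(1)\Rightarrow(3)$. We use the general inequality $\mathfrak{pDau}(X)\leq \mathfrak{rig}\text{-}\mathfrak{oct}(X)$, already noted at the start of the proof of Theorem~\ref{theo:CK}. Concretely, take a countable family $\{y_n\}\subseteq S_{\Lip(M)}$ and close it under sign changes, so it becomes $\{\pm y_n\}$, which is still countable. Applying the $\omega$-perfect Daugavet property to this family with an arbitrary slice gives $x\in B_{\Lip(M)}$ such that $\Vert \pm y_n + x\Vert = 2$ for all $n$. From $2=\Vert y_n+x\Vert\leq 1+\Vert x\Vert$ we get $\Vert x\Vert=1$, so $x\in S_{\Lip(M)}$ and $\Vert y_n\pm x\Vert=2$ for every $n$. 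This is exactly $\omega$-rigid octahedrality. Finally, $(3)\Rightarrow(2)$ is again part of \cite[Theorem 5.2]{cll23}.

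None of these steps is a real obstacle; the substance is in Theorem~\ref{theo:Liperfe} and the cited characterisation. The only point needing care is that completeness of $M$ is used in $(2)\Rightarrow(4)$ and $(3)\Rightarrow(2)$, through the cited results.
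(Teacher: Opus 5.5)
Your proposal is correct and is essentially the paper's argument: the paper gives no separate proof, and the corollary follows by combining Theorem~\ref{theo:Liperfe} for $(4)\Rightarrow(1)$ with the trivial step $(1)\Rightarrow(2)$ and the cited characterisation \cite[Theorem 5.2]{cll23} (see also \cite[Theorem 3.5]{gpr18}) for $(2)\Leftrightarrow(3)\Leftrightarrow(4)$. Your direct sign-symmetrisation proof of $(1)\Rightarrow(3)$ is valid but not needed, since $(3)$ already follows from $(2)$ through that citation.
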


Let us comment that it has been recently shown in \cite{KAASIK2026} that for complete length metric spaces, the Lipschitz free space $\mathcal F(M)$ over $M$, has the ASD2P, that is, for every convex combination of slices of the unit ball $C$ there are $x,y\in C$ such that $\|x-y\|=2$. As far as we know, this result is independent from our Theorem~\ref{theo:Liperfe}.

\section{Remarks and open questions}\label{sect:remarkopeque}

In view of Proposition~\ref{prop:necelinfisum} and the result for the $C(K)$-spaces case (Proposition~\ref{prop:linftysum-ckspaces}), it would be interesting to know if the converse holds true.

\begin{question}\label{ques:linftysum}
Let $\{X_i\colon  i\in I\}$ be an arbitrary family of Banach spaces, is it true that
$$\mathfrak{Dau}\left(\left(\bigoplus\nolimits_{i\in I} X_i\right)_{\ell_\infty}\right) = \min\limits_{i\in I}\{\mathfrak{Dau}(X_i)\}\quad \text{and} \quad \mathfrak{pDau}\left(\left(\bigoplus\nolimits_{i\in I} X_i\right)_{\ell_\infty}\right) = \min\limits_{i\in I}\{\mathfrak{pDau}(X_i)\}?$$
\end{question}

Concerning the result of Section~\ref{sect:Lipschitz}, it is clear that in Theorem~\ref{theo:Liperfe} we can not improve such theorem for general metric spaces. Indeed $\Lip([0,1])$, being isometrically isomorphic to $L_\infty([0,1])$, has density $c$. Because of that reason we wonder.

\begin{question}
Let $M$ be a length metric space and let $\kappa$ be an uncountable cardinal. Can the fact that $\mathfrak{Dau}\bigl(\Lip(M)\big)=\kappa$ be characterised in terms of a geometric property on $M$? Are $\mathfrak{Dau}\bigl(\Lip(M)\big)$ and $\mathfrak{pDau}\bigl(\Lip(M)\big)$ always equal?
\end{question}

For every Banach space $X$, the failure of the Daugavet property for $X$ is equivalent to $\mathfrak{Dau}(X)=1$, but it is also equivalent to $\mathfrak{Dau}(X)<\omega$ by \cite[Lemma 3.1.14]{kmrzw25}. This index never takes finite values different from 1. We do not know if the same holds for the perfect Daugavet property. 

\begin{question}
    If a Banach space $X$ satisfies that $\mathfrak{pDau}(X)>1$, does it follow that $\mathfrak{pDau}(X)\geq \omega$?
\end{question}

\begin{question}
    Is there a Banach space \emph{isometric} to a dual space for which $\mathfrak{Dau}(X)=\omega$?
\end{question}
We know that isomorphic to a dual space is possible, see Proposition~\ref{dualremark}. Next, we summarize some the questions about the reaping number of compact spaces.

\begin{question}\label{questionminimalpicharacter} Does the inequality $\min\{\pi_\chi(x) \colon  x\in K \} \leq \mathfrak{r}(K)^+$ hold for every compact space $K$?
\end{question}

\begin{question}
    Is $\ddot{\mathfrak{r}}_{n,k}(K) = \tilde{\mathfrak{r}}_{n,k}(K)$ for all compact spaces $K$ and all $2\leq k\leq n$?
\end{question}

\begin{question} For compact spaces $K$ and $L$, is $\mathfrak{r}(K\times L) = \mathfrak{r}(K)\vee \mathfrak{r}(L)$? Do we have, at least, that $\mathfrak{r}(K\times L) \leq \mathfrak{r}(K)^+\vee \mathfrak{r}(L)^+$
\end{question}

We may wonder if there is a way to reduce the study of the reaping number in general compact spaces to the better behaved class of totally disconnected compact spaces.

\begin{question}\label{irreduciblequestion}
Given a compact space $K$, does there exist a totally disconnected compact space $L$ and an irreducible surjection $L\longrightarrow K$ such that $\mathfrak{r}(L)=\mathfrak{r}(K)$?
\end{question} 

A positive answer to Question~\ref{irreduciblequestion} would imply a positive answer to Question~\ref{questionminimalpicharacter}, because we would have $\tilde{\mathfrak{r}}_\omega(K) \leq \tilde{\mathfrak{r}}_\omega(L)\leq {\mathfrak{r}}(L)^+ = \mathfrak{r}(K)^+$, where the first inequality is an easy property of irreducible surjections similar to Proposition~\ref{irreduciblereduction} and the second inequality is Theorem~\ref{theo:dsw}.

\section*{Acknowledgements}

The authors thank Tommaso Russo for sharing the proof of \cite[Proposition 6.11]{brss26}, which motivated the proof of Theorem~\ref{theo: L1 has k-Daugavet}. They also thanks an anonymous referee for pointing out that Proposition~\ref{prop:sufic0sum} contained a gap in the first version of the manuscript and for providing Remark~\ref{remark:c0sumfalso}. We also acknowledge the use of ChatGPT, Claude, and Microsoft Copilot 365 for polishing the final version of this manuscript. 

The work of A.\ Avil\'es, J.\ Langemets, M.\ Mart\'in, and A.\ Rueda Zoca was supported by:
\begin{itemize}
    \item MICIU/AEI/10.13039/501100011033 and ERDF/EU through the grants PID2021-122126NB-C31 (Mart\'in and Rueda Zoca) and  PID2021-122126NB-C32 (Avil\'es).
    \item Estonian Research Council grant PRG2545 (Langemets).
    \item ``Maria de Maeztu'' Excellence Unit IMAG, funded by MICIU/AEI/10.13039/501100011033 with reference CEX2020-001105-M (Mart\'in and Rueda Zoca).
    \item Fundaci\'on S\'eneca, ACyT Regi\'on de Murcia, grant 21955/PI/22 (Avil\'es and Rueda Zoca).
    \item Junta de Andaluc\'ia, grant FQM-0185 (Mart\'in and Rueda Zoca).
\end{itemize}

\end{document}